\newcommand{\be}{\begin{equation}}
\newcommand{\ee}{\end{equation}}
\newcommand{\beq}{\begin{eqnarray}}
\newcommand{\eeq}{\end{eqnarray}}
\newtheorem{prop}{Proposition}[section]
\newtheorem{theo}[prop]{Theorem}
\newtheorem{lemm}[prop]{Lemma}
\newtheorem{coro}[prop]{Corollary}
\newtheorem{rema}[prop]{Remark}
\newtheorem{defi}[prop]{Definition}
\def\begeq{\begin{equation}}
\def\endeq{\end{equation}}
\def\p{\partial}
\begin{document}

\title { Exhaustion of isoperimetric regions  in asymptotically hyperbolic manifolds with  scalar curvature $R\geq -6$}

\begin{abstract}
In this paper, aimed at exploring the fundamental properties of isoperimetric region in $3$-manifold $(M^3,g)$ which is asymptotic
to Anti-de Sitter-Schwarzschild manifold with scalar curvature $R\geq -6$, we prove that connected isoperimetric region $\{D_i\}$ with $\mathcal{H}_g ^3(D_i)\geq \delta_0>0$ cannot slide off to the infinity of $(M^3,g)$ provided that $(M^3,g)$ is not isometric to the hyperbolic space. Furthermore, we prove that isoperimetric region $\{D_i\}$ with topological sphere $\{\partial D_i\}$ as boundary is exhausting regions of $M$ if Hawking mass $m_H(\partial D_i)$ has uniform bound. In the case of exhausting isoperimetric region, we obtain a formula on expansion of  isoperimetric profile in terms of renormalized volume.
\end{abstract}

%\date{October 2009}

\keywords{isoperimetric regions, Hawking mass, exhuastion }
\renewcommand{\subjclassname}{\textup{2000} Mathematics Subject Classification}
 \subjclass[2000]{Primary 83C57  ; Secondary 53C44}

\author{Dandan Ji $^\dag$, Yuguang Shi$^\ddag$ and Bo Zhu $^\ddag$
 }

\address{Dandan Ji, School of Mathematical Sciences, Capital Normal University, Beijing, 100048.P. R. China.} \email{jidandan@cnu.edu.cn}

\address{Yuguang Shi, Key Laboratory of Pure and Applied mathematics, School of Mathematical Sciences, Peking University,
Beijing, 100871, P.R. China.} \email{ygshi@math.pku.edu.cn}

\address{Bo Zhu, Key Laboratory of Pure and Applied mathematics, School of Mathematical Sciences, Peking University,
Beijing, 100871, P.R. China.}\email{zhubo200916@163.com}

\thanks{$^\dag$Project supported by Beijing Postdoctoral Research Foundation. }
\thanks{$^\ddag$Research partially supported by NSF grant of China  10990013.}

\date{\today}
\maketitle

\markboth{Dandan Ji,  Yuguang Shi and Bo Zhu}{}

\section {introduction}
We first introduce the concept of the asymptotically hyperbolic manifold.
\begin{defi}\label{definitionofahmfd}
A complete, orientated and noncompact Riemannian 3-manifold $(M^3,g)$ is called to be asymptotically hyperbolic (AH) manifold if the following condition is true:
\begin{enumerate}
  \item There exists a compact set $K\subset M$ such that $M\backslash K$ is a union of finite components, and each component, which is called end, is diffeomorphic to $\mathbb{R}^3\backslash B_{r_0}(o)$;
  \item With respect to the spherical coordinates induced by the diffeomorphism above, the metric can be written as
  $$g=dr^2+(\sinh^2r)\cdot \sigma+\frac{1}{3\sinh r}h+O(e^{-3r}).$$
  where $\sigma$ is the standard metric on $\mathbb{S}^2$ and $h$ is a symmetric 2-tensor on $\mathbb{S}^2$. Moreover, the asymptotical expansion can be differentiated any times.
\end{enumerate}
\end{defi}

In many contexts, it is desirable to analyze the geometric quantities of isoperimetric surface (see Definition \ref{isoprofile} for its definition below) at infinity of  AH manifold. For instance, to investigate Penrose inequality for AH manifold (see \cite{BC2}, \cite{WXD}), to study the uniqueness and classifications of isoperimetric surfaces near the infinity of AH manifold (see \cite{CGGK}, \cite{OC}, \cite{NT}). To do that, we need to analyze the behavior of a family of isoperimetric regions $\{D_i\}$ (see Definition \ref{isoprofile} for its definition below) in AH manifold, we are usually faced with the following three situations:  one part of the region drifts off to the infinity, {\it i.e.} eventually it disjoints with any fixed compact domain; always passes through a fixed compact domain; is an exhaustion of the whole mainfold, {\it i.e.}  for any compact domain $K\subset M$, it will be contained in $D_i$ for $i$ large enough. Due to the each connected components of isoperimetric region is still isoperimetric region, we always assume that the isoperimetric regions are connected and it's corresponding boundary isoperimetric surfaces are also connected. In this paper, we give a delicate analysis of the behavior of isoperimetric regions in AH manifold with the scalar curvature $R \geq -6$. Namely, we have

\begin{theo}\label{classification0}
Let $(M^3,g)$ be an AH manifold with the scalar curvature $R(g)\geq -6$ and $h=m\sigma,\ m \in \mathbb{R}$ in Definition \ref{definitionofahmfd}. Suppose that $m>0$ and $\{D_i\}$  is a family of isoperimetric regions with $\mathcal{H}^3_g(D_i)\rightarrow \infty$, we have the following classification:
\begin{enumerate}
  \item $\{D_i\}$ is an exhuastion of $(M,g)$; or
  \item there exists a subsequence of $\{\Sigma_i=\partial D_i\}$ converging to properly, strongly stable(for  definition see Corollary \ref{strongstability}), noncompactly complete hypersurface, each connected component $\mathbb{S}$ of which is a constant mean curvature surface of $H=2$. Furthermore, $\mathbb{S}$ is conformally diffeomorphic to complex plane $\mathbb{C}$.
\end{enumerate}
Here, $\mathcal{H}^3_g(\cdot)$ denotes the Hausdroff measure in $(M, g)$ with respect to the metric $g$.

\end{theo}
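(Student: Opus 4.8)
The plan is to divide into cases according to whether or not the boundaries $\{\Sigma_i=\partial D_i\}$ eventually avoid every compact subset of $M$, and to rule out a third possibility -- that the regions $D_i$ themselves escape to infinity -- using the positivity of $m$. Write $V_i=\mathcal{H}^3_g(D_i)$. Recall first that an isoperimetric region is volume-preserving stable, so each $\Sigma_i$ is a smooth connected hypersurface of constant mean curvature $H_i$ with $\int_{\Sigma_i}\!\big(|\nabla f|^2-(|A_{\Sigma_i}|^2+{\rm Ric}(\nu,\nu))f^2\big)\ge 0$ whenever $\int_{\Sigma_i}f=0$. From Definition \ref{definitionofahmfd}, large coordinate geodesic spheres show that the isoperimetric profile $I_M$ satisfies $I_M(V)/V\to 2$; together with the lower bound on $I_M$ forced by $R(g)\ge -6$ and the fact that the mean curvature of isoperimetric regions is non-increasing in the enclosed volume, this gives $H_i\to 2$. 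We also fix, once and for all, a geodesic ball $B_*$.

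\emph{No escape of the regions.} The key analytic point is to exclude the possibility that $D_i$ leaves every compact set. If it did, then, after passing to a subsequence, $D_i$ would lie in the asymptotic region, where by Definition \ref{definitionofahmfd} the metric differs from the hyperbolic one only by the $O(e^{-r})$ mass-type term $\frac{1}{3\sinh r}h$ and an $O(e^{-3r})$ remainder; integrated over a region of volume $V_i$ these corrections change the perimeter only by $o(1)$, so $\mathcal{H}^2_g(\Sigma_i)\ge I_{\mathbb{H}^3}(V_i)-o(1)$. On the other hand $m>0$ together with $R(g)\ge -6$ makes the centered competitors strictly more efficient, $I_M(V)\le I_{\mathbb{H}^3}(V)-c_0$ for a fixed $c_0>0$ and all large $V$ -- a positive-mass/Penrose-type effect, and the place where the non-hyperbolicity of $(M,g)$ is used. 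Since $D_i$ is isoperimetric, $\mathcal{H}^2_g(\Sigma_i)=I_M(V_i)$, and the two estimates contradict each other for $i$ large. Hence $D_i$ cannot escape to infinity.

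\emph{The dichotomy.} Suppose first that $\{\Sigma_i\}$ eventually avoids every compact set. Let $K\supseteq B_*$ be any compact connected set. For $i$ large $\Sigma_i\cap K=\emptyset$, so $K$ lies entirely inside $D_i$ or entirely outside $\overline{D_i}$; if the latter held for infinitely many $i$ then, again by connectedness, $D_i$ would avoid every compact set along that subsequence, i.e. escape to infinity, which has just been excluded. Hence $K\subset D_i$ for all $i$ large, and as $K$ was an arbitrary large ball, $\{D_i\}$ exhausts $(M,g)$ -- alternative (1). Suppose instead that some compact $K_0$ meets $\Sigma_i$ for infinitely many $i$; pass to such a subsequence and choose $p_i\in\Sigma_i\cap K_0$ with $p_i\to p_\infty$. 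Since the $\Sigma_i$ are volume-preserving stable with uniformly bounded $H_i$ and $(M,g)$ has bounded geometry, Schoen's curvature estimate for stable constant mean curvature surfaces, in its volume-preserving form (valid away from a controlled finite set of points in each compact region, removable in the limit), yields local uniform bounds on $|A_{\Sigma_i}|$. Hence, along a further subsequence, $\Sigma_i\to\Sigma_\infty$ in $C^\infty_{\rm loc}$ with multiplicity one (higher multiplicity is incompatible with the embeddedness and area-minimizing property of the $D_i$), where $\Sigma_\infty\ni p_\infty$ is properly embedded and each of its connected components is a complete constant mean curvature surface with $H=2$. The limit cannot be compact: otherwise, for $i$ large, $\Sigma_i$ would be a small normal graph over $\Sigma_\infty$, hence bounded, $D_i$ would coincide up to that graph with one of the two components of $M\setminus\Sigma_\infty$, and since $\mathcal{H}^3_g(D_i)\to\infty$ while the bounded component has finite volume, $D_i$ would have to be the unbounded one -- forcing $M\setminus\overline{D_i}$ bounded and $\mathcal{H}^3_g(D_i)=\mathcal{H}^3_g(M)=\infty$, which is impossible. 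So $\Sigma_\infty$ is noncompact. It is strongly stable (Corollary \ref{strongstability}): a compactly supported $f$ on one component of $\Sigma_\infty$ with negative stability quadratic form transplants to $\Sigma_i$, and after correction by an arbitrarily small multiple of a test function supported on a far-away part of $\Sigma_i$ -- which exists because $\Sigma_\infty$, hence $\Sigma_i$, has arbitrarily large diameter -- it still has zero integral and negative quadratic form, contradicting the volume-preserving stability of $\Sigma_i$.

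\emph{Conformal type of the limit.} Fix a component $\mathbb{S}$ of $\Sigma_\infty$. With $H=2$ the Gauss equation gives $2K_{\mathbb{S}}=R(g)-2\,{\rm Ric}(\nu,\nu)+4-|A|^2$, so $|A|^2+{\rm Ric}(\nu,\nu)=\frac{1}{2}|A|^2+\frac{1}{2}R(g)+2-K_{\mathbb{S}}\ge -K_{\mathbb{S}}$, using $R(g)\ge -6$ and $|A|^2\ge H^2/2=2$. Feeding this into strong stability yields $\int_{\mathbb{S}}\big(|\nabla f|^2+K_{\mathbb{S}}f^2\big)\ge 0$ for all compactly supported $f$. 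By the analysis of complete surfaces satisfying this inequality due to Fischer-Colbrie and Schoen, and refined by Colding and Minicozzi, $\mathbb{S}$ is conformally $\mathbb{C}$; the only alternative is that $\mathbb{S}$ is flat and totally umbilic with a cylindrical conformal structure, along which $R(g)\equiv -6$ and $|A|^2\equiv 2$, and the equality case of the argument then forces a local splitting of $(M,g)$ incompatible with $m>0$. Hence $\mathbb{S}$ is conformally diffeomorphic to $\mathbb{C}$, completing alternative (2). The main obstacles are the exclusion of escape to infinity, which hinges on carefully weighing the $o(1)$ metric error against the $c_0$ positive-mass gain, and the exclusion of the cylindrical conformal structure for $\mathbb{S}$; both of these use $m>0$. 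The curvature estimates and the multiplicity-one convergence in the dichotomy step are routine once the ambient geometry is under control.
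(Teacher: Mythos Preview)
Your overall strategy matches the paper's: rule out drift to infinity using the positive-mass gap in the isoperimetric profile, then in the non-exhausting case take a subsequential limit of the $\Sigma_i$ using curvature estimates, establish strong stability of the limit, and use the stability inequality together with the Gauss equation to pin down the conformal type. The paper packages the no-escape step as Proposition~\ref{nodrift} (via Lemma~\ref{largev} and Proposition~\ref{areacomparison}), the curvature bound as Proposition~\ref{curvatureestimateisoperisurf}, the finite total curvature and $H=2$ as Lemma~\ref{finittotalcurvature}, and the strong stability as Proposition~\ref{stableofs}/Corollary~\ref{strongstability}; your outline covers the same ground.

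There is, however, one genuine gap: your exclusion of the cylindrical conformal type. You correctly reach, in the equality case, that along $\mathbb{S}$ one has $K\equiv 0$, $\AA\equiv 0$, and $R(g)\equiv -6$. But you then assert that ``the equality case of the argument forces a local splitting of $(M,g)$ incompatible with $m>0$'', and this step is not justified. A local warped-product splitting along $\mathbb{S}$ (even if you establish one) does not by itself say anything about the mass $m$, which is read off from the asymptotic expansion of $g$ at infinity. The paper's route (Theorem~\ref{rigidity0}) is quite different and more delicate: from $K\equiv 0$ and $\AA\equiv 0$ one inserts the asymptotics of Definition~\ref{definitionofahmfd} with $h=m\sigma$ into the Gauss equation to get, for large $\rho$ along $\mathbb{S}$,
\[
0=K=\frac{m}{\sinh^3\rho}\Bigl(1-\tfrac{3}{2}\,|\partial_\rho^\top|^2\Bigr)+O(e^{-4\rho}),
\]
so if $m\neq 0$ then $|\partial_\rho^\top|^2\to \tfrac{2}{3}$ along $\mathbb{S}$ at infinity. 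This contradicts Lemma~\ref{finiteintegoftangentialprho}, which gives $\int_{\mathbb{S}}(1-\langle\nu,\partial_\rho\rangle)^2<\infty$ and hence points on $\mathbb{S}$ where $|\partial_\rho^\top|\to 0$. Thus $m=0$, and the positive mass theorem forces $(M,g)\cong\mathbb{H}^3$. You should replace the ``local splitting'' sentence by this computation (or by an explicit appeal to Theorem~\ref{rigidity0}); without it, the cylinder case is not ruled out.

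Two smaller remarks. First, the mass term $\frac{1}{3\sinh r}h$ is \emph{not} an $O(e^{-r})$ perturbation in the relevant sense: in the $g_{\mathbb{H}}$-norm it is $O(e^{-3r})$, and this is exactly what makes the perimeter and volume corrections on drifting regions $o(1)$ (via the area growth bound of Lemma~\ref{areagrowthisosurf}); your estimate survives, but the decay rate you quote would not suffice. Second, the paper reaches ``conformally $\mathbb{C}$ or cylinder'' via finite total curvature (Lemma~\ref{finittotalcurvature}) and Huber's theorem, then uses $\int_{\mathbb{S}}K\ge 0$ from strong stability with $\phi\equiv 1$; your route through Fischer-Colbrie--Schoen is also viable, but note that the finite total curvature is independently needed to make sense of plugging $\phi\equiv 1$ into Corollary~\ref{strongstability} and to justify the integrals there.
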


Our arguments in Theorem \ref{classification0} above is similar to that developed in asymptotically flat manifold in the paper \cite{MJ2012} and \cite{MJ2013} . Correspondingly, in the Appendix C of \cite{MJ2012},  they obtained the result that the limiting suface in the second case (2) in Theorem \ref{classification0} is area minimizing in asymptotically flat manifold. Furthermore, For the case of nontrivial asymptotically Schwarzschilds manifold, it has been proved that such a limiting surface which is a properly stable minimal surface in that case, cannot exist(see Theorem 1 in \cite{Ca}).  With this fact in mind, we wonder whether the second case in Theorem \ref{classification0} can really happen or not in asymptotically hyperbolic space. Under the additional condition, we can show the following result.

\begin{theo}\label{rigidity0}
Let $\mathbb{S}$ be a connected component of the limiting surface of a family of isoperimetric surfaces $\{\Sigma_i\}$ in an AH manifold of $(M^3,g)$ with $R(g)\geq -6$ and $h=m\sigma,\ m \in \mathbb{R}$ in Definition \ref{definitionofahmfd} , if $\mathbb{S}$ is a noncompact, completely connected surface with $\int_{\mathbb{S}}K\leq0$, then $(M^3,g)$ is isometric to $\mathbb{H}^3$.
\end{theo}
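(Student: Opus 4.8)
The plan is to feed the structure of $\mathbb{S}$ coming from Theorem~\ref{classification0} into the strong stability inequality, convert it via the Gauss equation and the hypothesis $R\ge-6$ into a spectral inequality on $\mathbb{S}$, use the parabolicity of $\mathbb{S}$ and the hypothesis $\int_{\mathbb{S}}K\le0$ to force rigidity along $\mathbb{S}$, and finally propagate that rigidity through a CMC foliation to conclude, via the positive mass theorem, that $(M^3,g)=\mathbb{H}^3$.

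\textbf{Step 1 (set-up).} By Theorem~\ref{classification0}(2) and Corollary~\ref{strongstability}, $\mathbb{S}$ is a properly embedded, noncompact, complete, strongly stable surface with constant mean curvature $H\equiv2$ that is conformally diffeomorphic to $\mathbb{C}$; in particular $\mathbb{S}$ is parabolic. Starting from the strong stability inequality
$$\int_{\mathbb{S}}|\nabla\varphi|^2\,dA\ \ge\ \int_{\mathbb{S}}\bigl(|A|^2+\mathrm{Ric}(\nu,\nu)\bigr)\varphi^2\,dA ,\qquad\varphi\in C_c^\infty(\mathbb{S}),$$
I would substitute the Gauss equation $2K=R-2\,\mathrm{Ric}(\nu,\nu)+H^2-|A|^2$, the decomposition $|A|^2=|\mathring A|^2+\tfrac12H^2$ (with $\mathring A$ the trace-free part of the second fundamental form $A$), $H=2$, and $R\ge-6$, to rewrite it as
$$\int_{\mathbb{S}}|\nabla\varphi|^2\,dA+\int_{\mathbb{S}}K\,\varphi^2\,dA\ \ge\ \tfrac12\int_{\mathbb{S}}\bigl(|\mathring A|^2+R+6\bigr)\varphi^2\,dA\ \ge\ 0 .$$

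\textbf{Step 2 (rigidity along $\mathbb{S}$).} The last display says exactly that the Schr\"odinger operator $-\Delta_{\mathbb{S}}+K$ is nonnegative. Combining this with the parabolicity of $\mathbb{S}$ — logarithmic cutoffs $\varphi_j\to1$ with $\int_{\mathbb{S}}|\nabla\varphi_j|^2\to0$, together with the conformal-change technique of Fischer--Colbrie--Schoen (producing a conformal metric of nonnegative Gauss curvature on $\mathbb{C}$) and the Cohn--Vossen inequality — one obtains $\int_{\mathbb{S}}K\ge0$. With the hypothesis $\int_{\mathbb{S}}K\le0$ this forces $\int_{\mathbb{S}}K=0$, and tracing back the chain of inequalities in Step~1 forces equality everywhere: $R\equiv-6$ along $\mathbb{S}$, $\mathring A\equiv0$ on $\mathbb{S}$, and $K\equiv0$. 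Hence $\mathbb{S}$ is a complete, intrinsically flat, totally umbilic surface with $\kappa_1=\kappa_2=1$, along which $\mathrm{Ric}(\nu,\nu)\equiv-2$; i.e.\ $\mathbb{S}$ behaves like a horosphere.

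\textbf{Step 3 (foliation, propagation, conclusion).} Along $\mathbb{S}$ the Jacobi operator is $L=\Delta_{\mathbb{S}}+|A|^2+\mathrm{Ric}(\nu,\nu)=\Delta_{\mathbb{S}}$, so the constant $1$ is a positive Jacobi field and, by the standard implicit function theorem construction, $\mathbb{S}$ extends to a family $\{\mathbb{S}_t\}$ of surfaces of constant mean curvature $H\equiv2$, $\mathbb{S}_0=\mathbb{S}$, with lapse $\rho(\cdot,t)>0$; constancy of $H$ gives $L_t\rho(\cdot,t)=0$, and at $t=0$ the function $\rho(\cdot,0)$ is positive harmonic on the parabolic surface $\mathbb{S}$, hence constant, normalized to $1$. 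A continuity argument in $t$ using the evolution equations of the lapse, of $\mathring A_t$, of the intrinsic curvature of $\mathbb{S}_t$, and of $R$ along $\mathbb{S}_t$ — in the spirit of the rigidity arguments of Bray--Brendle--Neves and Micallef--Moraru — propagates $\rho\equiv1$, $\mathring A_t\equiv0$, $K_{\mathbb{S}_t}\equiv0$, $R|_{\mathbb{S}_t}\equiv-6$ to all $t$, so that $\partial_t g_t=2g_t$, $g=dt^2+e^{2t}g_0$ with $g_0$ flat, and a neighborhood of $\mathbb{S}$ is isometric to a horospherical region of $\mathbb{H}^3$. Since the leaves stay flat and totally umbilic their geometry cannot degenerate, the foliation extends for all $t\ge0$, and since $\mathbb{S}$ is properly embedded its leaves eventually cover a neighborhood of infinity of an end of $M$, on which $g$ is then \emph{exactly} the hyperbolic metric; in particular $h=m\sigma=0$ and the $O(e^{-3r})$ remainder vanishes there, so the mass of $(M^3,g)$ is zero. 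By the rigidity case of the positive mass theorem for asymptotically hyperbolic $3$-manifolds with $R\ge-6$ (Wang; Chru\'sciel--Herzlich), $(M^3,g)$ is isometric to $\mathbb{H}^3$.

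\textbf{Expected main obstacle.} Steps~1--2 are essentially soft once the parabolic cutoff arguments are handled with care (the analytic subtlety there being the integrability/convergence of $\int_{\mathbb{S}}K$, which the Fischer--Colbrie--Schoen conformal change is designed to control). The genuine difficulty is Step~3: propagating the rigidity along a foliation whose leaves are \emph{noncompact}, so that integration by parts and the strong maximum principle over a closed surface — the workhorses of the compact (minimal-sphere) rigidity arguments — are not directly available, and controlling the foliation all the way out to the asymptotically hyperbolic end. The parabolicity of $\mathbb{S}$, which forces positive harmonic and positive Jacobi functions to be constant, is precisely what must substitute for compactness in this step.
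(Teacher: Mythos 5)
Your Steps 1--2 essentially reproduce the paper's opening move: plugging $\varphi=1$ (via the logarithmic cutoffs behind Corollary \ref{strongstability}) into the stability inequality and using the Gauss equation with $H=2$ and $R\ge-6$ gives $0\le\tfrac12\int_{\mathbb{S}}(R+6+\|\mathring A\|^2)\le\int_{\mathbb{S}}K\le0$, hence $R\equiv-6$, $\mathring A\equiv0$ and $\int_{\mathbb{S}}K=0$. One caveat: the pointwise statement $K\equiv0$ does \emph{not} follow by ``tracing back the chain of inequalities'' --- an integral identity cannot force a sign-changing integrand to vanish. It follows instead from the Fischer--Colbrie--Schoen substitution $w=\log f$ for a positive Jacobi function $f$, combined with the logarithmic cutoff and $\int_{\mathbb{S}}|K|<\infty$ (Lemma \ref{finittotalcurvature}); you name these ingredients, so this is a misstatement rather than a fatal error, but the step must be run explicitly.

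The genuine gap is Step 3, which you yourself flag as the hard part without closing it. The foliation/propagation scheme has at least two unresolved problems: (i) the Bray--Brendle--Neves/Micallef--Moraru propagation rests on eigenvalue comparison and integration by parts over \emph{closed} leaves; for $t\neq0$ your leaves $\mathbb{S}_t$ are noncompact and you have no a priori parabolicity or stability for them, so ``the positive harmonic lapse is constant'' is available only at $t=0$ and the propagation is circular; (ii) even granting a foliated horospherical region, the claim that the leaves ``eventually cover a neighborhood of infinity of an end'' so that $g$ is exactly hyperbolic there and $m=0$ is unjustified --- the leaves sweep only one side of $\mathbb{S}$ and nothing controls them for large $t$. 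The paper bypasses all of this with a short direct argument: once $K\equiv0$, $\mathring A\equiv0$, $H=2$, $R=-6$, the Gauss equation together with the asymptotic expansion of $\mathrm{Ric}(\nu,\nu)$ gives, pointwise on $\mathbb{S}$ for large $\rho$,
$$0=K=\frac{m}{\sinh^3\rho}\Bigl(1-\tfrac32\,\|\partial_\rho^\top\|^2\Bigr)+O(e^{-4\rho}),$$
so if $m\neq0$ then $\|\partial_\rho^\top\|^2\to\tfrac23$ at infinity along $\mathbb{S}$; but Lemma \ref{finiteintegoftangentialprho} gives $\int_{\mathbb{S}}(1-\langle\nu,\partial_\rho\rangle)^2<\infty$, which on the noncompact properly embedded $\mathbb{S}$ produces points $p_i\to\infty$ with $\|\partial_\rho^\top\|(p_i)\to0$, a contradiction. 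Hence $m=0$, and Wang's positive mass theorem concludes. You should replace your Step 3 with this argument.
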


According to \cite{CD}, we see that the assumption of $h = m\sigma$ in Theorem 1.4 is necessary. It is interesting to see that in asymptotically flat case, the limit surface of isoperimetric surfaces is area minimizing which is a much stronger property than stability (see Appendix C in \cite{MJ2013}).With is in mind,  it is natural to conjecture that
a similar property holds for asymptotically hyperbolic manifolds, namely that the limit of large isoperimetric regions in an asymptotically hyperbolic manifold (which one expects to have mean curvature H = 2) minimizes the brane action functional $area(\cdot)-2 vol(\cdot)$ among compactly supported deformations of the surface.  In \cite{CE},   the possibility that large isoperimetric regions always pass through fixed region in asymptotically flat manifold was ruled out by using this property. One might guess that a similar result was true for asymptotically hyperbolic manifolds (not just asymptotically Schwarzschild-anti-de Sitter manifolds, as considered in Theorem \ref{rigidity0}).

\vskip 2mm
As an application of Theorem \ref{rigidity0}, we obtain a criterion for a family of isoperimetric regions being an exhaustion of $(M,g)$:

\begin{theo}\label{exhausting0}
Let $(M^3,g)$ be an AH manifold with $R(g)\geq -6$ and $h=m\sigma$ in Definition \ref{definitionofahmfd} with $m>0$. Suppose that $\{D_i\}$  is a family of isoperimetric regions with $\mathcal{H}^3_g(D_i)\rightarrow \infty$ and $\Sigma_i=\partial D_i$ is topological sphere. If in addition,
$$  m_H(\Sigma_i)\leq C, \quad \text{for all $i$}.$$
Then  $\{D_i\}$ is an exhaustion of $(M,g)$. Here, $m_H(\Sigma)$ is the Hawking mass of $\Sigma$, for its definition see Definition \ref{hawkingmass1}, $C$ is a positive constant independent of $i$.
 \end{theo}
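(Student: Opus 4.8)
The plan is to run the dichotomy of Theorem~\ref{classification0} and to exclude its second alternative by means of Theorem~\ref{rigidity0}. Since each $D_i$ is isoperimetric with $\mathcal H^3_g(D_i)\to\infty$ and the isoperimetric profile of an AH manifold tends to infinity with the enclosed volume, one first records that the areas satisfy $|\Sigma_i|\to\infty$. By Theorem~\ref{classification0}, either $\{D_i\}$ exhausts $(M,g)$ and there is nothing to prove, or, along a subsequence, $\{\Sigma_i\}$ converges properly to a noncompact, complete, properly embedded, strongly stable hypersurface $\Sigma_\infty$, every connected component $\mathbb S$ of which is a constant mean curvature surface with $H=2$, conformally diffeomorphic to $\mathbb C$ and hence parabolic. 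In this second case I will show that some component satisfies $\int_{\mathbb S}K\,d\mu\le 0$; Theorem~\ref{rigidity0} then forces $(M^3,g)$ to be isometric to $\mathbb H^3$, so $h\equiv 0$ and $m=0$, contradicting $m>0$. Hence the second alternative cannot occur, and $\{D_i\}$ is an exhaustion.

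So assume the second alternative holds; this is where the bound $m_H(\Sigma_i)\le C$ is used. By Definition~\ref{hawkingmass1}, the Hawking mass of the topological sphere $\Sigma_i$ is a fixed positive multiple of $|\Sigma_i|^{1/2}\bigl(16\pi-\int_{\Sigma_i}(H^2-4)\,d\mu\bigr)$, so $m_H(\Sigma_i)\le C$ together with $|\Sigma_i|\to\infty$ gives $\limsup_i\bigl(16\pi-\int_{\Sigma_i}(H^2-4)\,d\mu\bigr)\le 0$. On the other hand, since $\Sigma_i$ is a topological sphere, Gauss--Bonnet ($\int_{\Sigma_i}K_{\Sigma_i}\,d\mu=4\pi$), the Gauss equation and the splitting $|A|^2=\tfrac12 H^2+|\mathring A|^2$ yield the identity
\begin{equation}
16\pi-\int_{\Sigma_i}(H^2-4)\,d\mu=2\int_{\Sigma_i}(R+6)\,d\mu-4\int_{\Sigma_i}\bigl(\operatorname{Ric}(\nu,\nu)+2\bigr)d\mu-2\int_{\Sigma_i}|\mathring A|^2\,d\mu,
\end{equation}
whose pointwise counterpart on $\Sigma_\infty$, where $H=2$ and $|A|^2=2+|\mathring A|^2$, reads $2K_{\Sigma_\infty}=(R+6)-2(\operatorname{Ric}(\nu,\nu)+2)-|\mathring A|^2$. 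Strong stability of each component $\mathbb S$ (Corollary~\ref{strongstability}), combined with $R\ge-6$ and $|A|^2\ge\tfrac12 H^2=2$, gives $\int_{\mathbb S}|\nabla\varphi|^2\ge-\int_{\mathbb S}K\varphi^2$ for all $\varphi\in C_c^\infty(\mathbb S)$; with parabolicity and the Cohn--Vossen inequality this makes $\int_{\mathbb S}K$ a well-defined number in $[0,2\pi]$. Now fix a large compact $\Omega\subset M$ with $\partial\Omega$ transverse to the $\Sigma_i$, split $\Sigma_i=(\Sigma_i\cap\Omega)\cup(\Sigma_i\setminus\Omega)$, and pass to the limit in the identity above. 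On $\Sigma_i\cap\Omega$ the surfaces converge smoothly to $\Sigma_\infty\cap\Omega$, so the right-hand side restricted there tends to $4\int_{\Sigma_\infty\cap\Omega}K\,d\mu$; on $\Sigma_i\setminus\Omega$, which for $i$ large consists of a region modelled on a coordinate sphere far out joined by a thin neck to the end of $\Sigma_\infty$, one has $R+6\ge 0$, $|\mathring A|^2\ge 0$ and $\operatorname{Ric}(\nu,\nu)+2=O(e^{-3r})$ by Definition~\ref{definitionofahmfd}, so its contribution to the right-hand side has nonnegative $\liminf$ as $i\to\infty$ and then $\Omega\uparrow M$. Combining, $4\sum_{\mathbb S}\int_{\mathbb S}K\,d\mu\le\limsup_i\bigl(16\pi-\int_{\Sigma_i}(H^2-4)\,d\mu\bigr)\le 0$, and since each summand is nonnegative every component has $\int_{\mathbb S}K\,d\mu=0\le 0$, which is what was needed.

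The step I expect to be the main obstacle is exactly this last limiting argument, i.e.\ the control of the portion of $\Sigma_i$ that slides off to infinity. One has to show that, for $i$ large, $\Sigma_i\setminus\Omega$ is $C^2$-close to a coordinate sphere far out, except along a thin neck collapsing onto the (single) end of $\mathbb S$, so that its Willmore-type energy $\int(H^2-4)$ tends to $16\pi$ from the correct side while the integrals of $R+6$, $|\mathring A|^2$ and $\operatorname{Ric}(\nu,\nu)+2$ over it are asymptotically nonnegative or decay according to the expansion in Definition~\ref{definitionofahmfd}; equivalently, that the end of each $\mathbb S$ is asymptotically horospherical. Establishing this uses both the isoperimetric property of $D_i$, which fixes the model of $\Sigma_i$ near infinity away from the neck, and the hypothesis $m_H(\Sigma_i)\le C$, which through the $\limsup$ inequality above pushes the Willmore energies of $\Sigma_i$ up to $16\pi$ and thereby pins down the asymptotics of the end; without the mass bound one could only conclude $\int_{\mathbb S}K\,d\mu\le 2\pi$, which is too weak to invoke Theorem~\ref{rigidity0}.
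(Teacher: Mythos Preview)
Your overall strategy---run the dichotomy of Theorem~\ref{classification0} and eliminate the second alternative by forcing $\int_{\mathbb S}K\le 0$ so that Theorem~\ref{rigidity0} applies---is exactly the paper's. The splitting $\Sigma_i=(\Sigma_i\cap\Omega)\cup(\Sigma_i\setminus\Omega)$ and passage to the limit on the inside is also the same. The gap is in the treatment of the outside piece.

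You claim that the contribution of $\Sigma_i\setminus\Omega$ to the right-hand side of your identity
\[
16\pi-\int_{\Sigma_i}(H^2-4)\,d\mu
=2\int_{\Sigma_i}(R+6)\,d\mu-4\int_{\Sigma_i}\bigl(\operatorname{Ric}(\nu,\nu)+2\bigr)d\mu-2\int_{\Sigma_i}|\mathring A|^2\,d\mu
\]
has nonnegative $\liminf$, citing $R+6\ge 0$, $|\mathring A|^2\ge 0$ and $\operatorname{Ric}(\nu,\nu)+2=O(e^{-3\rho})$. But the term $-2\int_{\Sigma_i\setminus\Omega}|\mathring A|^2$ enters with a \emph{minus} sign, so $|\mathring A|^2\ge 0$ pushes this contribution \emph{down}, not up. With only the bound $\int_{\Sigma_i}|\mathring A|^2\le\Lambda_2$ from Lemma~\ref{squareintergableofA0}, the outside piece can stay strictly negative in the limit and your chain $4\sum_{\mathbb S}\int_{\mathbb S}K\le\limsup_i(16\pi-\int_{\Sigma_i}(H^2-4))$ breaks.

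What is missing is precisely the paper's Step~1: the Christodoulou--Yau/Hersch stability inequality for isoperimetric spheres (Proposition~3.6 in \cite{OC}),
\[
\int_{\Sigma_i}\bigl(R+6+|\mathring A|^2\bigr)\,d\mu_i
\le \tfrac32\,|\Sigma_i|^{-1/2}(16\pi)^{3/2}\,m_H(\Sigma_i),
\]
which together with $R+6\ge 0$ and $m_H(\Sigma_i)\le C$ forces $\int_{\Sigma_i}|\mathring A|^2\to 0$. Once you have this, the offending term vanishes in the limit, the $\operatorname{Ric}(\nu,\nu)+2$ term is handled by the decay $O(e^{-3\rho})$ and Lemma~\ref{areagrowthisosurf}, and your argument (equivalently the paper's Gauss-equation bookkeeping $K_i=\tfrac{H_i^2-4}{4}-\tfrac{|\mathring A_i|^2}{2}+O(e^{-3\rho})$ combined with Gauss--Bonnet) goes through to give $\int_{\mathbb S}K=0$. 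Your proposed workaround---a structural description of $\Sigma_i\setminus\Omega$ as a coordinate sphere plus a thin horospherical neck---is far harder than necessary and is not what is actually needed; the single stability inequality above replaces it entirely.
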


\begin{rema}
It was proved in \cite{OC} that in the case of compact perturbation of Schwarzschild-ADS of positive mass the Hawking mass of an isoperimetric sphere
is uniformly bounded. Also, according to Theorem \ref{exhausting0}, the problem to classify the isoperimetric surfaces with type of topological sphere is reduced to the one that classification of exhausting isoperimetric spheres.
\end{rema}

An interesting notion called renormalized volume  $V(M,g)$ was introduced in \cite{OC}. Namely, let $\Omega_i$ be an domain and an exhaustion of $(M,g)$, we define $V(M, g)=\lim\limits_{i\rightarrow \infty}\left( \mathcal{H}^3_g(\Omega_i) -\mathcal{H}^3_{\mathbb{H}}(\Omega_i) \right)$. Here, $\mathcal{H}^3_{\mathbb{H}}(\Omega_i)$ denotes the volume of domain enclosed by $\p\Omega_i $ in $\mathbb{H}^3$(see Definition 5.1 in \cite{OC}). By the same arguments there, we have $V(M,g)\geq 0$ provided that the scalar curvature of $(M^3,g)$ is at least $-6$ and equality holds iff $(M^3,g)$ is isometric to $\mathbb{H}^3$. It should be interesting to  understand the renormalized volume  $V(M,g)$  in more details. For an exhausting isoperimetric domain $\{D_i\}$, we have

\begin{theo}\label{differav0}
Let $(M^3,g)$ be completely, asymptotically hyperbolic manifold with $R(g)\geq -6$. For any exhausting isoperimetric domain $\{D_i\}$,
we have
$$\lim_{i\rightarrow\infty}(A_g(v_i)-A_{{\mathbb{H}}}(v_i))=-2V(M,g).$$
Here, $A_g(\cdot)$ and $A_{{\mathbb{H}}}(\cdot)$ denote the isoperimetric profiles in $(M^3,g)$ and $\mathbb{H}^3$ respectively (see Definition \ref{isoprofile} for the definition).
\end{theo}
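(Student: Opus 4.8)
Write $v_i:=\mathcal{H}^3_g(D_i)\to\infty$ and $\Sigma_i:=\partial D_i$; since $D_i$ is isoperimetric, $A_g(v_i)=\mathcal{H}^2_g(\Sigma_i)$. As $\{D_i\}$ exhausts $M$, for $i$ large $D_i\supset K$ and $\Sigma_i$ lies in the end, so $\Sigma_i\subset\{r>R_i\}$ with $R_i\to\infty$. Write $\bar g:=dr^2+\sinh^2 r\,\sigma$ for the hyperbolic background on $\{r>r_0\}\subset\mathbb{H}^3$; under the identification of the end with this set, $\Sigma_i$ bounds a relatively compact region $\hat D_i\subset\mathbb{H}^3$, and $\mathcal{H}^3_{\mathbb{H}}(D_i)=\mathcal{H}^3_{\bar g}(\hat D_i)=:\bar v_i$. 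By the definition of the renormalized volume applied to the exhaustion $\{D_i\}$, $v_i-\bar v_i\to V:=V(M,g)$. The plan is first to prove
\[
A_g(v_i)=A_{\mathbb{H}}(\bar v_i)+o(1),
\]
and then, using the first variation identity $A_{\mathbb{H}}'(v)=2\coth r(v)$ (mean curvature of the geodesic sphere bounding the ball of volume $v$ in $\mathbb{H}^3$) together with $r(v)\to\infty$, to conclude
\[
A_g(v_i)-A_{\mathbb{H}}(v_i)=(A_{\mathbb{H}}(\bar v_i)-A_{\mathbb{H}}(v_i))+o(1)=-\int_{\bar v_i}^{v_i}A_{\mathbb{H}}'(v)\,dv+o(1)\to -2V,
\]
the last step since $A_{\mathbb{H}}'(v)\to2$ and $v_i-\bar v_i\to V$.

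For the upper bound, let $S_{\rho_i}=\{r=\rho_i\}$ be the coordinate sphere in the end enclosing $g$-volume exactly $v_i$ (it exists because $\rho\mapsto\mathcal{H}^3_g(\{r<\rho\})$ increases continuously to $\infty$, and $\rho_i\to\infty$, in fact $\rho_i\ge R_i$). Being an admissible competitor, $A_g(v_i)\le\mathcal{H}^2_g(S_{\rho_i})$. From the expansion $g=\bar g+\tfrac{1}{3\sinh r}h+O(e^{-3r})$, a direct computation of the induced area element gives $\mathcal{H}^2_g(S_{\rho_i})=4\pi\sinh^2\rho_i+O(e^{-\rho_i})$, and the same expansion gives, for $R>r_0$ and any measurable $U\subset\{r>R\}$, $|\mathcal{H}^3_g(U)-\mathcal{H}^3_{\bar g}(U)|\le C\int_{\{r>R\}}e^{-3r}\,dV_{\bar g}\le C'e^{-R}$. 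Applying this to the two pieces of the symmetric difference of $\{r<\rho_i\}$ and $\hat D_i$ (both contained in $\{r\ge R_i\}$ and of equal $g$-volume, since $\mathcal{H}^3_g(\{r<\rho_i\})=v_i=\mathcal{H}^3_g(D_i)$) shows $\mathcal{H}^3_{\bar g}(B_{\rho_i})=\bar v_i+o(1)$. As $A_{\mathbb{H}}$ is $C^1$ with derivative bounded near $\bar v_i$ and $4\pi\sinh^2\rho_i=A_{\mathbb{H}}(\mathcal{H}^3_{\bar g}(B_{\rho_i}))$, we obtain $A_g(v_i)\le A_{\mathbb{H}}(\bar v_i)+o(1)$.

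For the lower bound I compare the $g$- and $\bar g$-areas of $\Sigma_i$. Pointwise $|1-dA_{\bar g}/dA_g|\le C|g-\bar g|_{\bar g}\le Ce^{-3r}$, so $|\mathcal{H}^2_g(\Sigma_i)-\mathcal{H}^2_{\bar g}(\Sigma_i)|\le C\int_{\Sigma_i}e^{-3r}\,dA_g$. To bound the right side, observe that for a.e.\ $\tau\ge R_i$, replacing $D_i$ by $D_i\cup\{r<\tau\}$, a region of $g$-volume $\ge v_i$, and using that $A_g$ is nondecreasing on $[v_i,\infty)$ (the mean curvature $H(v)$ of the isoperimetric surface of volume $v$ tends to $2$ as $v\to\infty$), one gets $\mathcal{H}^2_g(\Sigma_i\cap\{r<\tau\})\le\mathcal{H}^2_g(S_\tau)\le 4\pi\sinh^2\tau+C$; the layer-cake formula then yields $\int_{\Sigma_i}e^{-3r}\,dA_g\le C''e^{-R_i}\to0$. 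Since $\Sigma_i$ encloses $\mathbb{H}^3$-volume $\bar v_i$, the isoperimetric inequality in $\mathbb{H}^3$ gives $\mathcal{H}^2_{\bar g}(\Sigma_i)\ge A_{\mathbb{H}}(\bar v_i)$, hence $A_g(v_i)=\mathcal{H}^2_g(\Sigma_i)\ge A_{\mathbb{H}}(\bar v_i)-o(1)$. Together with the upper bound this establishes $A_g(v_i)=A_{\mathbb{H}}(\bar v_i)+o(1)$, completing the proof along the lines indicated.

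The step I expect to be the main obstacle is the area comparison $\mathcal{H}^2_g(\Sigma_i)-\mathcal{H}^2_{\bar g}(\Sigma_i)\to0$ in the lower bound: a priori $\Sigma_i$ could carry non-negligible area at moderate values of $r$, where the decay of $g-\bar g$ is weak, and the cut-and-fill estimate above is exactly what rules this out (it uses only $R_i\to\infty$ and $H(v)\to2$; alternatively one may invoke the structure theory of large isoperimetric surfaces in AH manifolds, which forces $\Sigma_i$ to be a small normal graph over a large coordinate sphere). The remaining ingredients---the identity $A'=H$, the explicit hyperbolic isoperimetric profile, and the definition and basic properties of $V(M,g)$ under $R\ge-6$---are standard.
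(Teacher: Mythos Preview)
Your argument follows the same two--step scheme as the paper: an upper bound via the coordinate--sphere competitor (this is exactly the content of Lemma~\ref{largev}), a lower bound via
\[
A_g(v_i)=\mathcal H^2_g(\Sigma_i)\ \ge\ \mathcal H^2_{\bar g}(\Sigma_i)-o(1)\ \ge\ A_{\mathbb H}(\bar v_i)-o(1),
\]
and then $A_{\mathbb H}(\bar v_i)-A_{\mathbb H}(v_i)\to -2V(M,g)$. Your use of $A_{\mathbb H}'(v)=2\coth r(v)\to 2$ together with $v_i-\bar v_i\to V(M,g)$ is a slightly slicker replacement for the paper's explicit computation with the radii $\rho_{v_i},\rho_{\bar v_i}$, but the substance is identical.

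The one genuine soft spot is your justification of the area--growth bound $\mathcal H^2_g(\Sigma_i\cap\{r<\tau\})\le \mathcal H^2_g(S_\tau)$. You deduce it from ``$A_g$ is nondecreasing on $[v_i,\infty)$, since $H(v)\to 2$'', but this presupposes that isoperimetric regions exist for every large $v$ and that $A_g$ is differentiable with $A_g'=H$, neither of which is available here. The paper (Lemma~\ref{areagrowthisosurf}) avoids this by comparing $D_i$ with an \emph{equal--volume} competitor: choose $\rho'\le\rho$ with $\mathcal H^3_g\bigl((D_i\setminus\mathbb D_\rho)\cup\mathbb D_{\rho'}\bigr)=v_i$ and use only that $D_i$ itself is isoperimetric, obtaining $\mathcal H^2_g(\Sigma_i\cap\mathbb D_\rho)\le 2\,\mathcal H^2_g(\partial\mathbb D_\rho)\le Ce^{2\rho}$ directly. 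Substituting this for your monotonicity step, the layer--cake estimate gives $\int_{\Sigma_i}e^{-3r}\,dA_g\le C e^{-R_i}\to 0$ exactly as you wrote, and the rest of your proof goes through unchanged.
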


\begin{rema}
More explicit expansion of $A_g(v)$ was obtained in \cite{OC} if  $(M^3,g)$ is a compact perturbation of AdS-Schwarzschild manifold (See equality in P.3 in \cite{OC}).
\end{rema}

\vskip 5mm
Our paper was inspired by \cite{BC} and \cite{OC} and some of these ideas and arguments are from these two interesting papers. One of key steps in the proof of Theorem \ref{classification0} is to prove no drift of a family of isoperimetric regions with positive uniform lower bound volume, we achieve this by the area comparison of isoperimetric surface (see Theorem \ref{areacomparison} below) and make use of renormalized volume (see Theorem \ref{differav0} above and Lemma \ref{largev} below). Another key step in the proof is strong stability of the limiting surface $\mathbb{S}$ (See Corollary \ref{strongstability} below) by which we are able to deduce the conformal type of $\mathbb{S}$ is complex plane, and actually $\mathbb{S}$ is flat if its total curvature is nonpositive. Combining with Lemma \ref{finiteintegoftangentialprho}, we see the total mass of ambient manifold $(M^3, g)$ vanishes. Hence, we get Theorem \ref{rigidity0} by the positive mass theorem proved in \cite{WXD}. The main idea of the proof of Theorem \ref{exhausting0} is to analyze the geometry properties of the limiting surface. Under the  assumption of uniform bound of the Hawking mass of sequence of isoperimetric surface, we deduce that the limiting surface $\mathbb{S}$ is umbilical and total curvature is zero. In addition, by Theorem \ref{rigidity0}, we see such case can only appear when the ambient manifold $(M^3, g)$ is isometric to $\mathbb{H}^3$. In fact, nonexistence of  such limiting surface was proved in \cite{OC} when $(M^3, g)$ is compact perturbation of Ads-Schwarzchild manifold, and arguments there rely heavily on the fact that the ambient manifold is AdS-Schwarzschild manifold outside a compact set (see Lemma 8.1 in \cite{OC}). Finally, we would point out that some arguments in this paper are from \cite{NT}.
\vskip 5mm
The remaining part of the paper goes follows. In Section 2 we prove some basic facts of isoperimetric surfaces used later; in Section 3 we prove an area comparison theorem for isoperimetric surfaces by which we can show all isoperimetric regions with positive uniform lower bounded can not slide off to the infinity provided that the ambient manifold is not isometric to $\mathbb{H}^3$; in Section 4, we prove our main results. Here, we make appointment that the constant $C$ in this paper might be different line by line.
\vskip 1cm
\textbf{Acknowledgments:} we are grateful to Dr. Otis Chodosh and Prof. Xiang Ma for giving some helpful suggestions, Besides, the third author would thank Dr. Zhichao Wang for his discussions in BICMR.  Finally, we would like to thank the referees very much for pointing out many inaccuracies. His or her valuable suggestions make the paper clearer and much more readable.

\vskip 1cm
\section {Preliminary}

One of basic facts for AH manifold is the existence of essential set on each end,

\begin{defi}\label{essentialset}
A compact subset $\mathbb{D}$ of $(M^3,g)$ is called an
essential set if
\begin{enumerate}
  \item $\mathbb{D}$ is a compact domain of $M$ with smooth and convex boundary $\mathbb{B}$ $:= \partial \mathbb{D},$ i.e. its second fundamental form with respect to the outward unit normal vector field is positive definite. Any geodesic half line emitting from $\mathbb{B}$ orthogonally to the outside of $\mathbb{D}$ can be extended to the infinity of $(M,g)$;
  \item The distant function $\rho$ to $\mathbb{D}$ is a smooth function;
\end{enumerate}
\end{defi}

We should notice that if $\mathbb{D}$ is an essential set, then $\rho$ is a smooth function
and has no critical point which implies that the region in $M$
which lies in the outside of the essential set $\mathbb{D}$ is diffeomorphic
to $[0,\infty)\times \mathbb{B}$. Furthermore, we denote $\mathbb{D}_\rho :=\{x\in M: d(x, \mathbb{D})\leq \rho\}$ where $d(\ ,\ )$ is the distant function
with respect to metric $g$. Then $Area (\mathbb{D}_\rho)$ increases along $\rho$ and the surface $\partial \mathbb{D}$ has positive mean curvature with respect to the outward unit normal vector.
\vskip 5mm

In this section, we will prove some basic properties of isoperimetric surfaces in AH manifold  $(M^3,g)$ with the scalar curvature $R\geq -6$.
The first one is on the growth of its area, more specifically, we have

\begin{lemm}\label{areagrowthisosurf}
Let  $(M^3,g)$ be an AH manifold and $\Sigma$ be an isoperimetric surface in $(M^3,g)$. Then, there exists a constant $\Lambda$ depending only on $(M^3,g)$ with
\begin{equation}\label{areagrowthisosurf1}
Area (\Sigma\cap \mathbb{D}_\rho )\leq \Lambda e^{2\rho}.
\end{equation}
\end{lemm}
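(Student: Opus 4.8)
The plan is to exploit the interplay between the isoperimetric property of $\Sigma$ and the monotonicity of the coordinate spheres $\mathbb{B}_\rho = \partial \mathbb{D}_\rho$ coming from the essential set. Fix an essential set $\mathbb{D}$ and the distance function $\rho$, so that in the region outside $\mathbb{D}$ we may foliate $M\setminus\mathbb{D}$ by the level sets $\mathbb{B}_\rho$. The key geometric input is that, because the metric is $g = dr^2 + \sinh^2 r\,\sigma + O(e^{-r})$, the level sets $\mathbb{B}_\rho$ satisfy $\mathrm{Area}(\mathbb{B}_\rho) \le C e^{2\rho}$ and, more importantly, have mean curvature with respect to $\partial_\rho$ bounded below by a constant $\lambda>0$ (indeed asymptotic to $2$), uniformly in $\rho$ large. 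This is exactly the sort of estimate the preliminary discussion after Definition \ref{essentialset} is flagging, and it follows from the asymptotic expansion by differentiating.

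The main step is a cut-and-paste comparison. Let $D$ be the isoperimetric region with $\partial D = \Sigma$, and set $V(\rho) = \mathcal{H}^3_g(D \setminus \mathbb{D}_\rho)$, the volume of the part of $D$ lying outside $\mathbb{D}_\rho$. By the coarea formula $V$ is Lipschitz with $-V'(\rho) = \mathcal{H}^2_g(D \cap \mathbb{B}_\rho)$ for a.e. $\rho$. Now compare $D$ with the competitor $D \cap \mathbb{D}_\rho$: we remove the piece $D\setminus \mathbb{D}_\rho$ and in doing so we delete the portion $\Sigma \setminus \mathbb{D}_\rho$ of the boundary and create the new boundary piece $D \cap \mathbb{B}_\rho$. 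The isoperimetric inequality, applied at the volume $\mathcal{H}^3_g(D) - V(\rho)$, together with the local nature of the first variation, gives the differential inequality
\[
  \mathrm{Area}(\Sigma \setminus \mathbb{D}_\rho) \;\le\; \mathrm{Area}(D \cap \mathbb{B}_\rho) + H_0\, V(\rho) + o(1),
\]
where $H_0 = \limsup H_{\Sigma_i} < \infty$ is the (uniformly bounded) mean curvature of isoperimetric surfaces; more cleanly, one compares $\mathrm{Area}(\Sigma \cap \mathbb{D}_\rho)$ against $\mathrm{Area}(\mathbb{B}_\rho) + \mathrm{Area}(\Sigma \setminus \mathbb{D}_\rho) $ and uses that replacing $\Sigma \setminus \mathbb{D}_\rho$ by $D\cap \mathbb{B}_\rho$ cannot decrease area-minus-$H_0$-volume by much. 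Combining this with the divergence-type identity $\int_{D\cap\mathbb{B}_\rho} \le \mathrm{Area}(D\cap\mathbb{B}_\rho)$ and the lower bound on the mean curvature of $\mathbb{B}_\rho$ (which lets one absorb the volume term $V(\rho)$ into $-V'(\rho)$ up to the geometry of the foliation), one obtains a Gronwall-type inequality for the quantity $\mathrm{Area}(\Sigma \cap \mathbb{D}_\rho)$, or directly a bound $\mathrm{Area}(\Sigma \cap \mathbb{D}_\rho) \le \Lambda\, \mathrm{Area}(\mathbb{B}_\rho) \le \Lambda e^{2\rho}$ after adjusting $\Lambda$.

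Concretely I would carry out the steps in the order: (1) record from Definition \ref{definitionofahmfd} the asymptotics of $\mathrm{Area}(\mathbb{B}_\rho)$ and of the mean curvature of $\mathbb{B}_\rho$; (2) set up $V(\rho)$ and the coarea identity; (3) perform the comparison $D \rightsquigarrow D \cap \mathbb{D}_\rho$ and extract the differential inequality, being careful that the first variation of volume contributes an $H_0$-weighted term since $\Sigma$ is only a CMC competitor, not a minimizer; (4) use the mean-curvature lower bound on $\mathbb{B}_\rho$ to control $V(\rho)$ by $\mathrm{Area}(\mathbb{B}_\rho)$ or by $-V'(\rho)$; (5) integrate to get the exponential bound, with $\Lambda$ depending only on the constants in the asymptotic expansion and on $H_0$.

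The main obstacle I anticipate is step (3): making the cut-and-paste rigorous when $\Sigma$ meets $\mathbb{B}_\rho$ in a possibly irregular set. For a.e. $\rho$ (by Sard applied to $\rho|_\Sigma$) the intersection is a nice $1$-manifold and $D\cap \mathbb{B}_\rho$ has finite perimeter, so the competitor is admissible; the delicate point is that the isoperimetric comparison is between regions of slightly different volumes, so one must either carry the $H_0 V(\rho)$ correction term honestly or re-inflate the competitor back to volume $\mathcal{H}^3_g(D)$ using a small normal push of $\mathbb{B}_\rho$, whose area cost is again $O(e^{2\rho})$. A secondary technical point is ensuring the mean curvature bound on isoperimetric surfaces ($H_{\Sigma} \le H_0$) is available uniformly; this is standard for isoperimetric surfaces in manifolds with a lower volume-growth bound, but it should be cited or proved before this lemma is used.
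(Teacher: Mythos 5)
Your instinct---cut $D$ along the level set $\mathbb{B}_\rho=\partial\mathbb{D}_\rho$ and play the isoperimetric property of $\Sigma$ off against the $O(e^{2\rho})$ area of the coordinate spheres---is the right one, but you have cut off the wrong half of $D$, and this is not cosmetic. Your competitor $D\cap\mathbb{D}_\rho$ deletes the boundary piece $\Sigma\setminus\mathbb{D}_\rho$, so the inequality it produces, $\mathrm{Area}(\Sigma\setminus\mathbb{D}_\rho)\le \mathrm{Area}(D\cap\mathbb{B}_\rho)+\cdots$, controls the area of $\Sigma$ \emph{outside} $\mathbb{D}_\rho$, whereas the lemma asks for the area \emph{inside}; since $\mathrm{Area}(\Sigma)$ is not uniformly bounded over the family of isoperimetric surfaces, you cannot pass from one to the other. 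Worse, the volume you discard, $V(\rho)=\mathcal{H}^3_g(D\setminus\mathbb{D}_\rho)$, is comparable to $\mathcal{H}^3_g(D)$ and hence unbounded, so it cannot be restored at area cost $O(e^{2\rho})$: in an AH manifold, enclosing an extra volume $V$ costs area of order $V$ itself (a normal push of $\mathbb{B}_\rho$ by $t$ adds volume $\sim e^{2\rho}t$ but ends on a sphere of area $\sim e^{2(\rho+t)}$), and the same problem sinks the $H_0V(\rho)$ correction term. This is what forces you into a Gronwall argument and into a uniform mean curvature bound $H_0$ for isoperimetric surfaces that is not available at this point of the paper; none of that machinery is needed.

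The fix is to invert the decomposition, which is what the paper's proof does. Keep $\mathcal{D}\setminus\mathbb{D}_\rho$ and discard $\mathcal{D}\cap\mathbb{D}_\rho$; the discarded volume is now at most $\mathrm{Vol}(\mathbb{D}_\rho)$, so it can be restored \emph{exactly} by adjoining a coordinate ball $\mathbb{D}_{\rho'}$ with $\rho'\le\rho$, whose boundary area is at most $\mathrm{Area}(\partial\mathbb{D}_\rho)$ by the monotonicity of $\rho\mapsto\mathrm{Area}(\partial\mathbb{D}_\rho)$. The competitor $(\mathcal{D}\setminus\mathbb{D}_\rho)\cup\mathbb{D}_{\rho'}$ then has the same volume as $\mathcal{D}$, so minimality of $\Sigma$ gives
$\mathrm{Area}(\Sigma)\le \mathrm{Area}\bigl(\partial(\mathcal{D}\setminus\mathbb{D}_\rho)\bigr)+\mathrm{Area}(\partial\mathbb{D}_{\rho'})\le \mathrm{Area}(\Sigma\setminus\mathbb{D}_\rho)+2\,\mathrm{Area}(\partial\mathbb{D}_\rho)$,
using only the inclusion $\partial(\mathcal{D}\setminus\mathbb{D}_\rho)\subset(\Sigma\setminus\mathbb{D}_\rho)\cup\partial\mathbb{D}_\rho$ (which also disposes of your regularity worry about $\Sigma\cap\mathbb{B}_\rho$---no Sard argument is needed, only subadditivity of perimeter under this inclusion). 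The term $\mathrm{Area}(\Sigma\setminus\mathbb{D}_\rho)$ now appears on both sides and cancels, yielding $\mathrm{Area}(\Sigma\cap\mathbb{D}_\rho)\le 2\,\mathrm{Area}(\partial\mathbb{D}_\rho)\le\Lambda e^{2\rho}$ with no first-variation correction, no mean curvature bound on $\Sigma$, and no differential inequality.
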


\begin{proof}
Let $\mathcal{D}$ be an isoperimetric region with boundary $\Sigma$ and $\mathbb{D}$ be the essential set of $(M^3,g)$. It is obvious that we can choose $\rho' \leq \rho$ such that  $Vol(\mathcal{D}\setminus\mathbb{D}_\rho )+ Vol(\mathbb{D}_{\rho'})=Vol (\mathcal{D})$. Then, By the definition of isoperimetric surface, we have
$$
Area(\p (\mathcal{D}\setminus\mathbb{D}_\rho ))+Area(\p \mathbb{D}_{\rho'})\geq Area(\Sigma).
$$
Note that,
$$
\p (\mathcal{D}\setminus\mathbb{D}_\rho )\subset (\Sigma\setminus\mathbb{D}_\rho )\cup\p \mathbb{D}_\rho,\ \
Area(\p \mathbb{D}_{\rho'})\leq Area(\p \mathbb{D}_\rho).
$$

Furthermore, we have for any $\rho>0$
$$\begin{array}{rcl}
&&Area(\Sigma \cap \mathbb{D}) + Area((\Sigma \setminus \mathbb{D}))\\
&=&Area(\Sigma)\\
&\leq & Area(\partial (\mathcal{D}\setminus\mathbb{D}_\rho ))+Area(\p \mathbb{D}_{\rho'})\\
&\leq & Area((\Sigma\setminus\mathbb{D}_\rho )\cup\p \mathbb{D}_\rho)+Area(\p \mathbb{D}_{\rho})\\
&\leq & Area((\Sigma\setminus\mathbb{D}_\rho ))+ Area(\p \mathbb{D}_\rho)+Area(\p \mathbb{D}_{\rho})\\
&\leq & Area((\Sigma\setminus\mathbb{D}_\rho ))+ 2Area(\p \mathbb{D}_\rho).\\
\end{array}$$
Hence, combining the inequality above, we have

$$
Area(\Sigma \cap \mathbb{D}_\rho)\leq 2 Area(\p \mathbb{D}_\rho)\leq \Lambda e^{2\rho}.
$$
It implies the conclusion, we finish the lemma.

\end{proof}

\vskip 5mm

Let $\Sigma$ be a connected isoperimetric surface in $(M^3, g)$, $A$, $H$ denote its second fundamental form and mean curvature with respect to the outward unit normal vector respectively and ${\AA}\triangleq A-\frac{H}2 g_\Sigma$ be the trace free part of $A$. Here and in the sequel, $g_\Sigma$ denotes the induced metric on $\Sigma$ from $g$. Then, we have

\begin{lemm}\label{squareintergableofA0}
Let $\Sigma_i$ be a family of connected isoperimetric surfaces in  AH manifold $(M^3, g)$ with $H\geq 2$ and $Area (\Sigma_i)\rightarrow \infty$. Then, there exist universal constant $\Lambda_k$, $k=1,2$ depending only on $(M,g)$ such that

$$ \
\ \ g(\Sigma_i)\leq \Lambda_1,\ \ \ \  \ \ \ \int_{\Sigma_i}(\|{\AA}\|^2 +(H^2-4)) \ d\sigma_i \leq \Lambda_2.
$$
Here and in the sequel, $g(\Sigma_i)$ denotes the genus of $\Sigma_i$,
\end{lemm}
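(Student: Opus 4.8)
The plan is to combine the Gauss equation on the isoperimetric surface with the scalar curvature lower bound $R \ge -6$ and the stability inequality, exactly as in the classical Bray--type estimates and as in \cite{BC}, \cite{OC}. Write $R$ for the ambient scalar curvature, $K$ for the Gauss curvature of $\Sigma_i$ with the induced metric, $\mathrm{Ric}(\nu,\nu)$ for the ambient Ricci curvature in the unit normal direction, and recall the traceless decomposition $\|A\|^2 = \|\AA\|^2 + \tfrac{H^2}{2}$. The Gauss equation gives
$$
2K = R - 2\,\mathrm{Ric}(\nu,\nu) + H^2 - \|A\|^2 = R - 2\,\mathrm{Ric}(\nu,\nu) + \tfrac{H^2}{2} - \|\AA\|^2 .
$$
The first step is to integrate this over $\Sigma_i$ and use Gauss--Bonnet, $\int_{\Sigma_i} K \, d\sigma_i = 2\pi\chi(\Sigma_i) = 4\pi(1 - g(\Sigma_i))$, to get a single identity relating the genus, $\int \|\AA\|^2$, $\int (H^2 - 4)$, and curvature terms of the ambient metric.

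The second step is to produce the matching upper bound from stability. Since $\Sigma_i$ is isoperimetric it is a stable constant-mean-curvature surface, so for every $f$ with $\int_{\Sigma_i} f \, d\sigma_i = 0$ one has $\int_{\Sigma_i}\big(|\nabla f|^2 - (\|A\|^2 + \mathrm{Ric}(\nu,\nu)) f^2\big) \ge 0$; taking $f$ to be an appropriate first eigenfunction / testing cleverly (the standard trick is to use $f \equiv \text{const}$ after correcting, or more precisely to exploit that the stability operator has nonnegative first eigenvalue on mean-zero functions) yields $\int_{\Sigma_i}(\|A\|^2 + \mathrm{Ric}(\nu,\nu)) \le $ (something controlled). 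Rewriting $\|A\|^2 + \mathrm{Ric}(\nu,\nu)$ via the Gauss equation again as $-2K + R + \tfrac{H^2}{2} + \|\AA\|^2$ \dots wait --- more directly, combining $\|A\|^2 + \mathrm{Ric}(\nu,\nu) = \tfrac12(R + H^2 - 2K)$ with stability and Gauss--Bonnet turns the stability inequality into an upper bound for $\int(\|\AA\|^2 + H^2 - 4) + (\text{genus term})$. Here the hypothesis $H \ge 2$ is essential: it makes $H^2 - 4 \ge 0$, so both summands in the claimed integral are nonnegative, and $H^2 = (H^2 - 4) + 4$ feeds a fixed constant times $\mathrm{Area}$\dots which is infinite --- so one must instead use that on an isoperimetric surface, after testing stability with a well-chosen function, the dangerous $\int H^2/2$ term cancels against $\int 2K$ up to the topological constant. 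This cancellation (the mean curvature is nearly $2$, and $\int K$ is topological) is what makes the estimate finite even though $\mathrm{Area}(\Sigma_i) \to \infty$.

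The third step is to bound the ambient-curvature contributions. Split $\Sigma_i = (\Sigma_i \cap \mathbb{D}_{\rho_0}) \cup (\Sigma_i \setminus \mathbb{D}_{\rho_0})$ for a fixed large $\rho_0$. On the compact core, $R + 6$, $\mathrm{Ric}(\nu,\nu) + 2$, and the area $\mathrm{Area}(\Sigma_i \cap \mathbb{D}_{\rho_0}) \le \Lambda e^{2\rho_0}$ by Lemma~\ref{areagrowthisosurf} are all bounded. On the end, the asymptotic expansion in Definition~\ref{definitionofahmfd} gives $R + 6 = O(e^{-3r})$ and $\mathrm{Ric}(\nu,\nu) + 2 = O(e^{-3r})$ (since the metric is $O(e^{-3r})$-close to a model with $R \equiv -6$ and $\mathrm{Ric} \equiv -2\cdot\mathrm{id}$), and these decay rates beat the $e^{2r}$ area growth from Lemma~\ref{areagrowthisosurf}, so $\int_{\Sigma_i \setminus \mathbb{D}_{\rho_0}} |R + 6| \, d\sigma_i$ and $\int_{\Sigma_i \setminus \mathbb{D}_{\rho_0}} |\mathrm{Ric}(\nu,\nu) + 2| \, d\sigma_i$ are uniformly bounded. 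Combining the genus identity from step one, the stability bound from step two, and these curvature bounds (together with $R \ge -6$ to keep the genus term one-signed and hence to bound $g(\Sigma_i)$ first, then feed it back), yields the two asserted constants $\Lambda_1, \Lambda_2$ depending only on $(M,g)$.

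\textbf{Main obstacle.} The delicate point is step two: extracting from the stability inequality an estimate that survives the non-compactness of $\Sigma_i$. A naive choice of test function loses an uncontrolled multiple of $\mathrm{Area}(\Sigma_i)$; one must use that the isoperimetric (not merely CMC-stable) condition forces $H$ to be asymptotic to $2$ and exploit the precise algebraic cancellation between $\int_{\Sigma_i} \tfrac{H^2}{2}\,d\sigma_i$ and $\int_{\Sigma_i} 2K\,d\sigma_i$ (the latter being topological by Gauss--Bonnet), so that only $\int(\|\AA\|^2 + H^2 - 4)$ and a bounded genus term remain. Making this cancellation rigorous --- in particular showing the error terms from "$H$ close to $2$" are integrable against the $e^{2r}$ area growth --- is the crux, and is where the hypotheses $H \ge 2$ and $R \ge -6$ are used in tandem.
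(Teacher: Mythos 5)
There is a genuine gap at the heart of your Step 2, which you yourself flag as the ``main obstacle'' but do not resolve. The volume-preserving stability inequality only applies to mean-zero test functions, and ``$f\equiv\text{const}$ after correcting'' gives the zero function, hence nothing. The paper's proof supplies the missing ingredient via Hersch's trick: it takes a conformal map $\Psi_i=(\Psi_i^1,\Psi_i^2,\Psi_i^3):\Sigma_i\to\mathbb{S}^2\subset\mathbb{R}^3$ balanced so that $\int_{\Sigma_i}\Psi_i^j=0$, tests stability with each $\Psi_i^j$ and sums; since $\sum_j|\Psi_i^j|^2=1$ and the Dirichlet energy of a conformal map equals $8\pi\deg(\Psi_i)$, this yields $\int_{\Sigma_i}(\mathrm{Ric}(\nu)+\|A\|^2)\le 8\pi d_{\Sigma_i}$, and the Brill--Noether theorem bounds $d_{\Sigma_i}\le 1+[\frac{g(\Sigma_i)+1}{2}]$, so the right-hand side is $4\pi g(\Sigma_i)+12\pi$. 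Without some such device there is no way to bound $\int_{\Sigma_i}(\mathrm{Ric}(\nu)+\|A\|^2)$ at all, and hence no way to start the bootstrap that first bounds the genus and then $\int(\|\AA\|^2+(H^2-4))$.

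Moreover, the cancellation mechanism you propose is not the right one. You suggest that ``the dangerous $\int H^2/2$ term cancels against $\int 2K$ up to the topological constant,'' but $\int_{\Sigma_i}2K=8\pi(1-g(\Sigma_i))$ is a bounded topological quantity and cannot absorb $\int_{\Sigma_i}H^2/2\ge 2\,\mathrm{Area}(\Sigma_i)\to\infty$. The cancellation that actually saves the estimate is pointwise: $\mathrm{Ric}(\nu)+\|A\|^2\ge(\mathrm{Ric}(\nu)+2)+\|\AA\|^2+\tfrac12(H^2-4)$, where $H\ge2$ makes the last term nonnegative and $\mathrm{Ric}(\nu)+2=O(e^{-3\rho})$ is integrable against the $e^{2\rho}$ area growth of Lemma~\ref{areagrowthisosurf} (your Step 3 correctly handles this last point, and your use of the Gauss equation and Gauss--Bonnet in Step 1 matches the paper). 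Also, the claim that the isoperimetric condition ``forces $H$ to be asymptotic to $2$'' plays no role here: $H$ is constant on each $\Sigma_i$ and the lemma simply assumes $H\ge2$.
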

\begin{proof}
We will adopt Hersch's technique to prove the lemma, It was proved in \cite{NT} when the topology of $\Sigma_i$ is $\mathbb{S}^2$. Let
$$
\Psi_i=(\Psi_i^1,\Psi_i^2, \Psi_i^3 ): \Sigma_i \mapsto\mathbb{S}^2 \hookrightarrow \mathbb{R}^3
$$
be a conformal map with degree of $d_{\Sigma_i}$ and
$$
\int_{\Sigma_i}\Psi_i ^j=0, \qquad j=1,2,3.
$$
Noting that $\Sigma_i$ is volume preserving stable, we have

$$
\int_{\Sigma_i}(Ric(v)+\|A\|^2)|\Psi_i^j|^2 \leq \int_{\Sigma_i}\|\nabla \Psi_i^j\|^2,\ \ j=1,2,3.
$$
Here and in the sequel, $v$ is the outward unit normal vector of $\Sigma_i$. Hence

$$
\int_{\Sigma_i}(Ric(v)+\|A\|^2)\leq \int_{\Sigma_i}\|\nabla \Psi_i\|^2 =8\pi d_{\Sigma_i}.
$$
Due to Brill-Noether Theorem(see \cite{GH}), we can choose $\Psi_i$ with
$$
d_{\Sigma_i}\leq (1+[\frac{g(\Sigma_i)+1}2]).
$$
Therefore, we get

$$
\int_{\Sigma_i}(Ric(v)+\|A\|^2)\leq 8\pi \left(1+(\frac{g(\Sigma_i)+1}2)\right)=4\pi g(\Sigma_i)+12\pi.
$$

Let $e_1, e_2$ be the unit tangent vector of $\Sigma_i$ and $K$ be its Gauss curvature, then we have

\begin{equation}\label{estimate1}
\begin{split}
\int_{\Sigma_i}\left(Ric(v)+\|A\|^2\right)&=\int_{\Sigma_i}\left(Ric(e_1)+Ric(e_2)+H^2\right)-2\int_{\Sigma_i}K\\
&\leq 4\pi g(\Sigma_i)+12\pi,
\end{split}
\end{equation}
By Gauss-Bonnet formula, we obtain
\begin{equation}\label{integralofK}
\int_{\Sigma_i} K =4\pi\left(1- g(\Sigma_i)\right),
\end{equation}
On the other hand, we have
\begin{equation}\label{estimate2}
\begin{split}
\int_{\Sigma_i}(Ric(e_1)+Ric(e_2)+H^2)&=\int_{\Sigma_i}(H^2 -4)+\int_{\Sigma_i}O(e^{-3\rho})\\
&\geq \int_{\Sigma_i}O(e^{-3\rho})
\end{split}
\end{equation}
Here, we have used the assumption of $H\geq 2$ in the inequality above.
\\

Next, we will show that there exist a universal constant $\Lambda_3$ depending only on $(M,g)$ and $\mathbb{D}$ such that
\begin{equation}\label{finiteintegral1}
\int_{\Sigma_i}e^{-3\rho}\leq \Lambda_3.
\end{equation}
In fact, for any integer $k\geq 0$, let $\Sigma_{i,k}\triangleq \Sigma_i \cap(\mathbb{D}_{k}\setminus \mathbb{D}_{k-1}) $ and  $\mathbb{D}_{-1}=\emptyset$. Then
$$
\Sigma_i=\bigcup_{k}\Sigma_{i,k}.
$$

Due to Lemma \ref{areagrowthisosurf}, we see that there exists a constant $C$ independent of $k$. Then for each $k$, we have
$$
\int_{\Sigma_{i,k}}e^{-3\rho}\leq Ce^{-k}.
$$
Thus, we see that inequality (\ref{finiteintegral1}) is true.

Combining with (\ref{estimate1}), (\ref{integralofK}),
(\ref{estimate2}), (\ref{finiteintegral1}), we have
$$
g(\Sigma_i)\leq 5+C \int_{\Sigma_i}e^{-3\rho}\leq \Lambda.
$$
By a direct computation and noting that $H\geq 2$, we obtain

$$
Ric(v)+\|A\|^2 \geq (Ric(v)+2)+\|\AA\|^2=O(e^{-3\rho})+\|\AA\|^2.
$$
Then together with (\ref{estimate1}) and (\ref{estimate2}), we obtain

$$
\int_{\Sigma_i}\left(\|\AA\|^2 +(H^2-4) \right) d\sigma_i \leq \Lambda_2 .
$$
Thus, we finish the proof of Lemma \ref{squareintergableofA0}.

\end{proof}

In order to get more delicate estimate of isoperimetric surfaces, we need the following lemma proved in \cite{NT}:

\begin{lemm}\label{finiteintegoftangentialprho}
Let $\{\Sigma_i\}$ be a family of connected isoperimetric surfaces in AH manifold $(M^3, g)$ with the scalar curvature $R\geq -6$ and $v$ be its outward unit normal vector and $\rho$ be the distant function to the essential set $\mathbb{D}$. Then, we have

$$
\int_{\Sigma_i} \left(1-\langle v, \frac{\p}{\p \rho}\rangle\right)^2 \ d\sigma_i \leq C.
$$
Here, $C$ is a universal constant depending only on $(M^3,g)$.
\end{lemm}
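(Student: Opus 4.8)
The plan is to exploit the volume-preserving stability of the isoperimetric surface $\Sigma_i$ with a carefully chosen test function built out of the radial coordinate, compensated so that it has zero mean. The natural candidate is (a truncated version of) $\varphi = e^{-\rho}$ or a function comparable to $1 - \langle v, \partial_\rho\rangle$; the point is that $\partial_\rho$ is, asymptotically, the position-like vector field whose normal component governs how far $\Sigma_i$ deviates from a round coordinate sphere. First I would record the asymptotic behavior of the geometry of the level sets of $\rho$: from Definition \ref{definitionofahmfd} the metric on $M\setminus\mathbb{D}$ has the form $dr^2 + \sinh^2 r\,\sigma + O(e^{-r})$-type lower order terms, so the coordinate spheres $\partial\mathbb{D}_\rho$ have mean curvature $2\coth\rho + O(e^{-3\rho}) = 2 + O(e^{-2\rho})$ and second fundamental form close to $\coth\rho$ times the metric. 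In particular $\langle v,\partial_\rho\rangle$ is close to $1$ precisely when $\Sigma_i$ is nearly tangent to a coordinate sphere, and $|\nabla^{\Sigma_i}\rho|^2 = 1 - \langle v,\partial_\rho\rangle^2 = (1-\langle v,\partial_\rho\rangle)(1+\langle v,\partial_\rho\rangle)$ is comparable to $1-\langle v,\partial_\rho\rangle$.

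Next I would compute the second variation. Plugging a test function $\varphi$ (radial, suitably truncated on $\mathbb{D}_R$ for large $R$, then letting $R\to\infty$ using Lemma \ref{areagrowthisosurf} to control boundary terms) into the stability inequality
$$
\int_{\Sigma_i}\bigl(|\nabla^{\Sigma_i}\varphi|^2 - (\mathrm{Ric}(v) + \|A\|^2)\varphi^2\bigr)\,d\sigma_i \ \geq\ 0
$$
and using $H\geq 2$ together with $R\geq -6$ to get $\mathrm{Ric}(v)+\|A\|^2 \geq 2 + \|\AA\|^2 + O(e^{-3\rho})$, one obtains a Bochner-type inequality relating $\int |\nabla^{\Sigma_i}\rho|^2 \varphi'(\rho)^2$ to the tangential and normal second fundamental form terms. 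The key computation is the identity for the Laplacian of $\rho$ restricted to $\Sigma_i$: $\Delta_{\Sigma_i}\rho = (\Delta_M \rho - \nabla^2\rho(v,v)) - H\langle v,\partial_\rho\rangle + |\nabla^{\Sigma_i}\rho|^2 \nabla^2\rho(\partial_\rho,\partial_\rho)$-type terms, where $\nabla^2\rho$ on the coordinate spheres is $\coth\rho$ times the spherical metric up to $O(e^{-3\rho})$. Integrating this over $\Sigma_i$ against $\varphi$ and comparing with the stability inequality, the leading $2\coth\rho \approx 2$ terms should combine to produce a coercive quantity; after Cauchy–Schwarz and absorbing error terms using $\int_{\Sigma_i} e^{-3\rho} \leq \Lambda_3$ (established in the proof of Lemma \ref{squareintergableofA0}) and $\int_{\Sigma_i}(\|\AA\|^2 + H^2 - 4)\leq \Lambda_2$, one is left with $\int_{\Sigma_i}(1-\langle v,\partial_\rho\rangle)^2 \leq C$.

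The main obstacle I anticipate is the bookkeeping of error terms in the non-compact region: the test function $\varphi$ must decay fast enough that all integrals converge, yet not so fast that the coercive term degenerates, and one must justify discarding the boundary terms at $\partial\mathbb{D}_R$ as $R\to\infty$. This is where Lemma \ref{areagrowthisosurf}, giving $\mathrm{Area}(\Sigma_i\cap\mathbb{D}_\rho)\leq \Lambda e^{2\rho}$, is essential: it forces $\int_{\Sigma_i} e^{-3\rho}\,d\sigma_i$ and similar weighted integrals to be finite with a bound independent of $i$, and it controls the flux through $\partial\mathbb{D}_R$. A secondary subtlety is that the conformal test functions used in Lemma \ref{squareintergableofA0} controlled only genus and $\|\AA\|^2$; here the geometric content is different (it measures the ``non-radiality'' of $\Sigma_i$), so the argument genuinely needs the refined second-variation computation sketched above rather than a black-box appeal to the earlier lemma. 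Since this is the lemma attributed to \cite{NT}, I would follow that reference's treatment for the precise choice of cutoff and the handling of the lower-order terms.
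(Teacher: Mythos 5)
Your proposal assembles the right raw materials (the expansion of $\Delta_{\Sigma_i}\rho$ from \cite{NT}, the weighted bound $\int_{\Sigma_i}e^{-3\rho}\leq\Lambda_3$, and the bound $\int_{\Sigma_i}(\|{\AA}\|^2+H^2-4)\leq\Lambda_2$ from Lemma \ref{squareintergableofA0}), but the step that is supposed to produce the conclusion is never actually carried out, and the route you sketch is both more complicated than necessary and unverified at its crucial point. First, each $\Sigma_i$ is a \emph{closed} surface (the boundary of an isoperimetric region of finite volume), so there are no cutoffs, no boundary terms at $\partial\mathbb{D}_R$, and no limit $R\to\infty$ to justify; the difficulty that dominates your last paragraph concerns the noncompact limit surface $\mathbb{S}$, not the surfaces $\Sigma_i$ to which the lemma applies. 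Second, no fresh appeal to the stability inequality is needed: stability has already been spent in Lemma \ref{squareintergableofA0}, and the present lemma follows from the divergence theorem alone.

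The paper simply integrates the identity
\begin{equation*}
\Delta_{\Sigma_i}\rho=(4-2\|(\partial_\rho)^{T}\|)e^{-2\rho}+(2-H)+(H-2)\bigl(1-\langle \partial_\rho,v\rangle\bigr)+\bigl(1-\langle \partial_\rho,v\rangle\bigr)^2+O(e^{-3\rho})
\end{equation*}
over the closed surface $\Sigma_i$, so the left-hand side integrates to zero. What makes this work is the sign structure you did not check: $(4-2\|(\partial_\rho)^{T}\|)e^{-2\rho}\geq 0$ because $\|(\partial_\rho)^{T}\|\leq 1$, and $(H-2)(1-\langle\partial_\rho,v\rangle)\geq 0$ because $H\geq 2$ for large area (Lemma \ref{hgeq2}) and $\langle\partial_\rho,v\rangle\leq1$. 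Hence
\begin{equation*}
\int_{\Sigma_i}\bigl(1-\langle \partial_\rho,v\rangle\bigr)^2\ \leq\ \int_{\Sigma_i}(H-2)+\Bigl|\int_{\Sigma_i}O(e^{-3\rho})\Bigr|\ \leq\ \tfrac14\Lambda_2+C\Lambda_3,
\end{equation*}
using $H-2\leq\tfrac14(H^2-4)$ when $H\geq2$ together with (\ref{finiteintegral1}). Your version instead hopes that testing stability against a decaying radial function and ``comparing'' with the Laplacian identity yields a coercive $(1-\langle v,\partial_\rho\rangle)^2$ term; you never exhibit that term with a favorable sign, nor do you handle the mean-zero normalization required by volume-preserving stability, so as written the argument does not close. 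The fix is to drop the stability detour entirely and run the short divergence-theorem computation above.
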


\begin{proof}
Due to Proposition 3.4 in \cite{NT}, we have
\begin{equation}\label{laplacerhoonsigma}
\begin{split}
\Delta_\Sigma \rho &=(4-2\|(\frac{\p}{\p \rho})^T\|)e^{-2\rho} + (2-H)+(H-2)(1-\langle \frac{\p}{\p \rho} ,v \rangle)\\
&+(1-\langle \frac{\p}{\p \rho} ,v \rangle)^2+ O(e^{-3\rho}).
\end{split}
\end{equation}
where $(\frac{\p}{\p \rho})^T$ denotes the tangential projection of $\frac{\p}{\p \rho}$ on $T\Sigma$. Integrating (\ref{laplacerhoonsigma})
on $\Sigma$ and  together with Lemma \ref{squareintergableofA0} and formulae (\ref{finiteintegral1}), we get the conclusion.

\end{proof}

{\begin{lemm}\label{hgeq2}
Let $(M^3, g)$ be an AH manifold with  and $\Sigma$ be a connected isoperimetric surface in $(M^3, g)$. Then its mean curvature  $H> 2$ provided that $Area(\Sigma)$ is large enough.
\end{lemm}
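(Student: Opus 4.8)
The plan is to argue by a combination of the first variation of area and the monotonicity of the isoperimetric profile, together with a comparison against coordinate spheres near infinity. First I would recall that for an isoperimetric surface $\Sigma = \partial\mathcal{D}$ enclosing volume $v$, the mean curvature $H$ is constant and equals $A_g'(v)$ in the sense of the left/right derivatives of the isoperimetric profile; more precisely $H$ is the Lagrange multiplier coming from the constrained minimization, so $H = \lambda(v)$ with $\lambda$ nonincreasing in $v$ whenever $R \geq -6$ (this monotonicity is exactly the kind of statement that follows from the second variation / Ros-type argument and is available from the comparison with $\mathbb{H}^3$). Since in $\mathbb{H}^3$ the geodesic sphere of enclosed volume $v$ has mean curvature $H_{\mathbb{H}}(v) \to 2^{+}$ as $v \to \infty$, and since large isoperimetric regions have volume $\to\infty$, one already gets $H \geq 2 - o(1)$; the content of the lemma is to upgrade this to the strict inequality $H > 2$ for all sufficiently large $v$.

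The key step is to rule out $H = 2$ exactly. Suppose $\Sigma$ is isoperimetric with $H \equiv 2$ and $Area(\Sigma)$ large. I would use Lemma \ref{squareintergableofA0} and Lemma \ref{finiteintegoftangentialprho}: with $H = 2$ the estimate \eqref{estimate1}--\eqref{estimate2} forces $\int_\Sigma \|\AA\|^2 \leq \Lambda_2$ and $\int_\Sigma(1 - \langle v,\tfrac{\p}{\p\rho}\rangle)^2 \leq C$, so $\Sigma$ is ``almost umbilic and almost radial'' near infinity. Plugging $H = 2$ back into the integrated form of \eqref{laplacerhoonsigma} (the identity for $\Delta_\Sigma \rho$), the $(2-H)$ term vanishes and the left side integrates to zero, leaving
$$
\int_\Sigma \Big( (4 - 2\|(\tfrac{\p}{\p\rho})^T\|) e^{-2\rho} + (1 - \langle \tfrac{\p}{\p\rho}, v\rangle)^2 + O(e^{-3\rho}) \Big)\, d\sigma = 0.
$$
Since $\|(\tfrac{\p}{\p\rho})^T\| \leq 1$, the first integrand is $\geq 2 e^{-2\rho} + O(e^{-3\rho})$, which is strictly positive for $\rho$ large; combined with the nonnegativity of the squared term, the only way the total can vanish is if $\Sigma$ is confined to a fixed compact region — but then $Area(\Sigma)$ cannot be large (e.g.\ by Lemma \ref{areagrowthisosurf}, or simply because a closed surface in a compact set has bounded area once we have local area bounds). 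This contradiction shows $H \neq 2$; combined with $H \geq 2$, we get $H > 2$.

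Alternatively — and this may be the cleaner route to present — I would compare directly with the profile of $\mathbb{H}^3$: by Theorem \ref{differav0} (or the weaker monotonicity statement behind it) one has $A_g(v) \leq A_{\mathbb{H}}(v) + o(1)$ along an exhausting family, while if $H(v) = 2$ on a sequence $v \to \infty$ then integrating $A_g' = H$ would force $A_g$ to grow no faster than $2v$, i.e.\ like the volume term, which contradicts the strictly super-linear (in fact exponential-type) growth of $A_{\mathbb{H}}(v)$ forced by the AH asymptotics; again the strict inequality $H>2$ drops out. The main obstacle I anticipate is making the identification $H = A_g'(v)$ and the monotonicity of $v \mapsto H(v)$ rigorous at the level of the possibly non-differentiable profile (one-sided derivatives, and the fact that $H$ could a priori jump), and handling the case where the isoperimetric region is \emph{not} itself an exhaustion — there one must localize the argument to the part of $\Sigma$ sitting far out near infinity, which is precisely where the integral identities from Lemmas \ref{squareintergableofA0} and \ref{finiteintegoftangentialprho} do the work. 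For that reason I would lead with the integral-identity argument above rather than the profile-comparison one.
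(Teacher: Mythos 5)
Your proposal takes a completely different route from the paper, and as written it has genuine gaps. The paper's argument is a two-line touching-point comparison: since $\Sigma$ is compact, choose the smallest $r$ with $\Sigma\subset\overline{\mathbb{D}}_r$, so that $\Sigma$ touches the level set $\partial\mathbb{D}_r$ from the inside at some point $p$; the maximum principle then gives $H(\Sigma)\geq H_{\partial\mathbb{D}_r}(p)=2+2e^{-2r}-\operatorname{tr}_\sigma h/\sinh^3 r+O(e^{-4r})$, and since large area forces $r$ to be large (by Lemma \ref{areagrowthisosurf} the area inside $\mathbb{D}_\rho$ is at most $\Lambda e^{2\rho}$), the term $2e^{-2r}$ dominates the $O(e^{-3r})$ correction and yields $H>2$ strictly. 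Note in particular that the strict inequality comes from the \emph{sign} of the subleading term in the expansion of the coordinate spheres' mean curvature, not from any global profile information.

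The concrete problems with your argument are these. First, you never actually establish $H\geq 2$: you assert that the Lagrange multiplier $\lambda(v)=H$ is nonincreasing in $v$ when $R\geq -6$ and that this gives $H\geq 2-o(1)$, but neither the monotonicity nor the lower bound is proved in the paper or standard; the first-variation inequality only sandwiches $H$ between one-sided difference quotients of $A_g$, and a \emph{lower} bound on $H$ would require a lower bound on the left-increments of $A_g$, which you do not have. Consequently your final step ``$H\neq 2$ combined with $H\geq 2$ gives $H>2$'' rests on an unproved premise, and even granting $H\geq 2-o(1)$ it would not exclude $H\to 2^{-}$. Second, the integrated identity for $\Delta_\Sigma\rho$ does not force the contradiction you want: the integrand is only pointwise positive for $\rho$ large, while on the part of $\Sigma$ near the essential set the $O(e^{-3\rho})$ term is merely bounded and of unknown sign (and the formula is not even valid inside $\mathbb{D}$), so a bounded negative contribution there can cancel the small positive contribution $\sim e^{-2\rho}$ from far out; ``the integral vanishes'' does not imply ``$\Sigma$ is confined to a compact set.'' Third, in your alternative route the claim that $A_{\mathbb{H}}(v)$ has super-linear, exponential-type growth in $v$ is false: $A_{\mathbb{H}}(v)=2v+4\pi\rho_v+O(1)=2v+O(\log v)$, so the discrepancy with a putative $A_g(v)\leq 2v+C$ is only logarithmic and, more importantly, that route again needs the unproved monotonicity of $H(v)$ and only applies to exhausting families. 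I would recommend replacing the whole argument with the outermost-coordinate-sphere comparison.
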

\begin{proof}
As $\Sigma$ is a compact surface, Hence, there exists a $\mathbb{D}_r$ such that
$$\Sigma \subset \overline{\mathbb{D}}_r,\ p \in \Sigma\cap \partial \mathbb{D}_r.$$
By comparison theorem, we have

$$H(\Sigma)\geq H_{\mathbb{D}_r}(p).$$
By a direct calculation, we have

$$H_{\mathbb{D}_r}(p)= 2 + 2e^{-2r} -\frac{tr_{\sigma} h}{\sinh ^3 r}+O(e^{-4r}).$$
As $Area(\Sigma)$ is large enough, it is obvious that as $r$ is large enough, we obtain

$$H(\Sigma)\geq H_{\mathbb{D}_r}(p) > 2.$$

\end{proof}

Proposition \ref{curvatureestimateisoperisurf} asserts that a sequence of connected isoperimetric surfaces satisfying some natural assumptions
in an AH manifold $(M^3, g)$ with the scalar curvature $R\geq -6$ have uniformly bounded second fundamental forms. Moreover, if we assume  they pass through a fixed compact set $\mathbf{K}$ in $M$ and their areas approach to infinity, then, by the compactness theorem , we obtain that there exists subsequence of $\{\Sigma_i\}$ converging with multiplicity one to properly embedded, noncompact and complete surface in $(M^3, g)$ in $C^k$-local topology for any $k\geq 1$ (for details, see the arguments in the proof of Theorem 18 in \cite{Ros}). It is possible that the limiting surface may have more than one connected components. We denote $\mathbb{S}$ be any complete and noncompact connected components of the limit surface.  Our next goal is to investigate some basic facts of $\mathbb{S}$.\\

The following  proposition is on the curvature estimate of isoperimetric surfaces in  AH manifold $(M^3, g)$. Namely,

\begin{prop}\label{curvatureestimateisoperisurf}
Let $\Sigma$ be an isoperimetric surface in an AH manifold $(M^3, g)$ with mean curvature $0<H\leq \Lambda$. Then, there exists a constant $C$ depending only on $\Lambda$ and $(M^3, g)$ ( more specifically, $C^1-$ bound of curvature and lower bound of injective radius of $(M^3, g)$) such that
$$
\|A\|\leq C.
$$

\end{prop}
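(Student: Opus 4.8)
\textbf{Proof strategy for Proposition \ref{curvatureestimateisoperisurf}.}
The plan is to argue by contradiction via a standard point-picking (blow-up) argument, exploiting the fact that an isoperimetric surface is stationary for area under a volume constraint, hence is a critical point of the functional $\mathrm{Area}(\cdot) - \lambda\,\mathrm{Vol}(\cdot)$ for some Lagrange multiplier $\lambda = H$; in particular $\Sigma$ is a hypersurface of constant mean curvature $H$, and as a volume-preserving minimizer it is in fact a minimizer of the partially free boundary problem among competitors supported in small balls (small-scale almost-minimality). Concretely, suppose the conclusion fails: then there is a sequence of isoperimetric surfaces $\Sigma_j$ with mean curvatures $0 < H_j \le \Lambda$ and points $p_j \in \Sigma_j$ with $\|A_{\Sigma_j}\|(p_j) \to \infty$. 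First I would rescale the ambient metric $g$ about $p_j$ by the factor $\lambda_j := \|A_{\Sigma_j}\|(p_j) \to \infty$; under $g_j := \lambda_j^2\, g$ (in normal coordinates centered at $p_j$) the rescaled metrics converge in $C^\infty_{loc}$ to the flat metric on $\mathbb{R}^3$, the mean curvature of $\Sigma_j$ in $g_j$ becomes $H_j/\lambda_j \to 0$, and the rescaled surfaces have $\|A\| \le 1$ near $p_j$ with $\|A\|(p_j) = 1$, using a ``choose the worst point in a slightly larger ball'' maximum-principle trick to get the uniform bound on a fixed-size ball.

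Next I would pass to a subsequential limit. Because the $\Sigma_j$ are volume-preserving area minimizers, the rescaled surfaces are almost-minimizing (the volume penalty scales away), so there are uniform area bounds on fixed balls (monotonicity formula) and the convergence is smooth with multiplicity one in a neighborhood of the base point; the limit is a complete surface $\Sigma_\infty \subset \mathbb{R}^3$ passing through the origin, with $H \equiv 0$, hence a complete, embedded, area-minimizing minimal surface in $\mathbb{R}^3$, and crucially with $\|A_{\Sigma_\infty}\|(0) = 1 \neq 0$, so it is not a plane. On the other hand, $\Sigma_\infty$ inherits stability (indeed minimality) from the $\Sigma_j$, so it is a complete stable minimal surface in $\mathbb{R}^3$; by the Fischer-Colbrie--Schoen / do Carmo--Peng theorem such a surface must be a flat plane. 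This contradicts $\|A_{\Sigma_\infty}\|(0) = 1$, and the contradiction proves the claim. The dependence of $C$ on $\Lambda$ and on the $C^1$-bounds of curvature and the lower injectivity radius bound of $(M^3,g)$ enters exactly through the requirement that the rescaled ambient metrics converge smoothly to the flat metric uniformly over the sequence.

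The main obstacle is making the compactness step rigorous with \emph{uniform} area bounds: a priori one only knows $\Sigma_j$ is isoperimetric globally, not that small pieces near $p_j$ are area-minimizing at the relevant (tiny, comparable to $\lambda_j^{-1}$) scale. The point is that the volume constraint is soft: if $\Sigma_j$ is replaced inside a geodesic ball $B_r(p_j)$ by a competitor $\Sigma_j'$ enclosing a volume differing by $\delta V$, one can restore the volume by a small normal perturbation elsewhere whose area cost is $O(\delta V)$ (using that $H_j \le \Lambda$ bounds the first variation of volume), and $\delta V = O(r^3)$ while the area discrepancy being tested is $O(r^2)$; thus $\Sigma_j$ is a $(\Lambda', r_0)$-almost minimizer with $\Lambda'$ uniform. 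This almost-minimality, combined with the volume-preserving stability inequality already recorded (in the proof of Lemma \ref{squareintergableofA0}), gives the uniform area ratios and the smooth subconvergence. Once that is in place the rest is the classical blow-up dichotomy. Alternatively, one may cite the curvature estimates for stable CMC surfaces in the literature directly; but spelling out the blow-up keeps the argument self-contained and transparent, and is the route I would take.
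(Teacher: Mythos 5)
Your proposal is correct and follows essentially the same route as the paper: a blow-up by the maximal second fundamental form, smooth convergence of the rescaled ambient metrics to flat $\mathbb{R}^3$, passage to a complete stable minimal limit surface with $\|A\|(0)=1$ (stability obtained exactly as in Proposition \ref{stableofs}), and a contradiction with do Carmo--Peng / Fischer-Colbrie--Schoen. The only difference is that you spell out the almost-minimality and uniform area bounds needed for the compactness step, which the paper leaves implicit.
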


\begin{proof}
we prove this proposition by contradiction, we assume that the proposition is false. Then,  we can find a family of isoperimetric surface $\Sigma_i \subset M^3$  with $0< H|_{\Sigma_i}\leq \Lambda$,
 and $p_i \in\Sigma_i  $ such that $C^2_i =\|A\|(p_i)=\max \|A\|\rightarrow \infty$. Then we consider $\{(M^3, C^2_i g, p_i )\}$ converges to $(\mathbb{R}^3, g_{euc}, o)$ in $C^{2,\alpha}$- pointed Gromov-Hausdorff topology. Here, $g_{euc}$, $o$ are the standard Euclidean metric of $\mathbb{R}^3$ and a fixed point respectively. Note that $\Sigma_i$ is still an isoperimetric surface in $\{(M^3, C^2_i g, p_i )\}$ with constant mean curvature $C^{-1}_i H|_{\Sigma_i}\rightarrow 0$. By the same arguments in the proof of Proposition \ref{stableofs}, we get a complete and stable minimal surface $\Sigma_\infty$ in $\mathbb{R}^3$ with its second fundamental form $\|A\|(o)=1$ which contradicts with the well-known result of \cite{dP}(see also \cite{FS}). Hence, Proposition \ref{curvatureestimateisoperisurf} is true.
\end{proof}

\begin{lemm}\label{finittotalcurvature}
The limiting surface $\mathbb{S}$ is a properly embedded, noncompact and complete surface with constant mean curvature of $H=2$, $Area(\mathbb{D}_\rho \cap \mathbb{S})\leq C e^{2\rho}$ and
$$
\int_{\mathbb{S}}|K|<C.
$$
Here, $K$ denotes the Gauss curvature of $\mathbb{S}$ and $C$ is a universal constant depending only $(M,g)$.
\end{lemm}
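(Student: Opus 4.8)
The plan is to collect the four assertions about $\mathbb{S}$ — proper embeddedness/completeness, constant mean curvature $H=2$, the area growth bound $Area(\mathbb{D}_\rho\cap\mathbb{S})\leq Ce^{2\rho}$, and finiteness of total curvature $\int_{\mathbb{S}}|K|<C$ — and show that each one descends from the corresponding uniform estimate already established for the approximating sequence $\{\Sigma_i\}$. The embeddedness, properness, completeness and multiplicity-one convergence were already explained in the paragraph preceding Proposition \ref{curvatureestimateisoperisurf} (via the uniform curvature bound of Proposition \ref{curvatureestimateisoperisurf} and the compactness theorem in the style of Theorem 18 in \cite{Ros}), so that part is essentially a citation. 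The content to be proved is the three quantitative statements.

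First I would handle the mean curvature. By Lemma \ref{hgeq2} the isoperimetric surfaces $\Sigma_i$ with large area satisfy $H(\Sigma_i)>2$, and since $\Sigma_i\to\mathbb{S}$ in $C^k_{loc}$ with multiplicity one, $H(\mathbb{S})\geq 2$ pointwise; moreover $H$ is constant on $\mathbb{S}$ because each $\Sigma_i$ has constant mean curvature (isoperimetric surfaces are CMC) and the constants converge. To get $H=2$ exactly, I would use Lemma \ref{squareintergableofA0}, which gives $\int_{\Sigma_i}(H_i^2-4)\,d\sigma_i\leq\Lambda_2$ uniformly. Since $H_i$ is a constant $\geq 2$ on the connected surface $\Sigma_i$, this forces $(H_i^2-4)\,Area(\Sigma_i)\leq\Lambda_2$, and because $Area(\Sigma_i)\to\infty$ we conclude $H_i\to 2$; hence the limiting constant mean curvature of $\mathbb{S}$ is exactly $2$. (A small subtlety: one must argue that the relevant connected component $\Sigma_i$ feeding into $\mathbb{S}$ has area $\to\infty$, which follows from the standing connectedness conventions and the hypothesis $Area(\Sigma_i)\to\infty$ together with the fact that $\mathbb{S}$ is noncompact.)

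Next, the area growth bound $Area(\mathbb{D}_\rho\cap\mathbb{S})\leq Ce^{2\rho}$ is inherited directly from Lemma \ref{areagrowthisosurf}, which gives $Area(\Sigma_i\cap\mathbb{D}_\rho)\leq\Lambda e^{2\rho}$ with $\Lambda$ independent of $i$; passing to the $C^k_{loc}$, multiplicity-one limit on the fixed compact set $\mathbb{D}_\rho$ preserves the inequality with the same constant. For the total curvature, I would combine the Gauss equation on $\mathbb{S}$, which writes $K$ in terms of the ambient sectional curvature (which is $-1+O(e^{-3\rho})$ by the AH expansion in Definition \ref{definitionofahmfd}), the principal curvatures, and $H=2$: schematically $2K = \det A_{amb\text{-terms}} = (\text{sectional curv}) + \tfrac{H^2}{2} - \tfrac12\|A\|^2 = -1 + 2 - \tfrac12\|A\|^2 + O(e^{-3\rho}) = 1 - \tfrac12\|\AA\|^2 + O(e^{-3\rho})$, so that $|K|\leq C(1+\|\AA\|^2 + e^{-3\rho})$ pointwise. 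Wait — this only bounds $\int_{\mathbb{D}_\rho\cap\mathbb{S}}|K|$ by a constant plus $Area(\mathbb{D}_\rho\cap\mathbb{S})$, which grows, so a pointwise bound is not enough. Instead I would integrate the Fatou-limit of the uniform estimate from Lemma \ref{squareintergableofA0}: $\int_{\Sigma_i}\|\AA\|^2\,d\sigma_i\leq\Lambda_2$ and $\int_{\Sigma_i}e^{-3\rho}\leq\Lambda_3$ (formula (\ref{finiteintegral1})) pass to $\mathbb{S}$ by lower semicontinuity, giving $\int_{\mathbb{S}}\|\AA\|^2<\infty$ and $\int_{\mathbb{S}}e^{-3\rho}<\infty$; then the Gauss-equation identity $|K|\leq C(\|\AA\|^2 + e^{-3\rho})$ — note the constant term $1$ must actually be absorbed, which works because on $\mathbb{S}$ with $H=2$ the leading terms $-1+2$ cancel up to the traceless part and the $O(e^{-3\rho})$ error — yields $\int_{\mathbb{S}}|K|\leq C$.

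The main obstacle I expect is precisely this last cancellation: getting $|K|$ bounded by $\|\AA\|^2$ plus an integrable error \emph{without} an additive constant requires using $H=2$ and the sharp ambient curvature asymptotics carefully, i.e. checking that the "$+1$" from the ambient curvature and the "$-1$" from $H^2/2-$(something) really cancel in the Gauss equation for a surface of mean curvature exactly $2$ in a space that is asymptotically hyperbolic of curvature $-1$. A secondary technical point is justifying the passage to the limit for unbounded integrands like $e^{-3\rho}$ and $\|\AA\|^2$ — here one exhausts $\mathbb{S}$ by the compact sets $\mathbb{D}_\rho\cap\mathbb{S}$, uses $C^k_{loc}$ convergence with multiplicity one on each, takes $i\to\infty$ to get the bound on $\mathbb{D}_\rho\cap\mathbb{S}$ with a $\rho$-independent constant, and then lets $\rho\to\infty$ by monotone convergence.
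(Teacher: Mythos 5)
Your proposal is correct and follows essentially the same route as the paper: the area growth comes from Lemma \ref{areagrowthisosurf}, the identification $H=2$ from combining Lemma \ref{hgeq2} with the uniform bound on $\int_{\Sigma_i}(H_i^2-4)$ in Lemma \ref{squareintergableofA0} together with $Area(\Sigma_i)\to\infty$, and the total curvature bound from the Gauss equation plus $\int_{\mathbb{S}}\|\AA\|^2<C$ and (\ref{finiteintegral1}). The cancellation you flag as the main obstacle does go through exactly as you suspect ($K=-\tfrac12\|\AA\|^2+O(e^{-3\rho})$ once $H=2$), and your treatment of the limit passage by exhaustion and lower semicontinuity is the standard justification the paper leaves implicit.
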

\begin{proof}
Combining Lemma \ref{squareintergableofA0} and Lemma \ref{hgeq2}, we see that the limiting surface $\mathbb{S}$ is a  properly embedded and complete surface with mean curvature $H=2$ and
$$
\int_{\mathbb{S}} |\AA|^2<C.
$$
Combining with Gauss equation and (\ref{finiteintegral1}), we get
$$
\int_{\mathbb{S}}|K|<C.
$$
Furthermore, by Lemma \ref{areagrowthisosurf}, we see that
$$
Area(\mathbb{D}_\rho \cap \mathbb{S})\leq C e^{2\rho}.
$$
Therefore, we finish proving the lemma.
\end{proof}

Our next lemma is to assert that the limit surface $\mathbb{S}$ is stable in the following sense, the similar result in asymptotically flat case was obtained in \cite{MJ2013}.

\begin{prop}\label{stableofs}
For any $\phi \in C^\infty _0 (\mathbb{S})$, we have

$$
\int_{\mathbb{S}}(Ric (v)+\|A\|^2)\phi^2 \leq \int_{\mathbb{S}}|\nabla\phi|^2.
$$
\end{prop}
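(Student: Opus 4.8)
The plan is to obtain the stability inequality for the limiting surface $\mathbb{S}$ as a limit of the volume-preserving stability inequalities for the approximating isoperimetric surfaces $\Sigma_i$. Recall that each $\Sigma_i$ is isoperimetric, hence volume-preserving stable, so for every $\psi \in C^\infty(\Sigma_i)$ with $\int_{\Sigma_i}\psi\, d\sigma_i = 0$ one has
$$
\int_{\Sigma_i}\big(Ric(v)+\|A\|^2\big)\psi^2\, d\sigma_i \leq \int_{\Sigma_i}|\nabla \psi|^2\, d\sigma_i .
$$
Given a fixed test function $\phi \in C^\infty_0(\mathbb{S})$, I would first fix a large compact set $\mathbf{K}\subset M$ containing the support of $\phi$ (viewed in $M$ via the local $C^k$-convergence $\Sigma_i \to \mathbb{S}$), and transplant $\phi$ to a function $\phi_i$ on $\Sigma_i$ supported in $\Sigma_i \cap \mathbf{K}$, using the graphical convergence of $\Sigma_i$ over $\mathbb{S}$ in a tubular neighborhood; then $\phi_i \to \phi$ and $|\nabla_{\Sigma_i}\phi_i| \to |\nabla_{\mathbb{S}}\phi|$ in the appropriate sense, and the geometric quantities $Ric(v)$, $\|A\|^2$, $d\sigma_i$ converge locally as well (using Proposition \ref{curvatureestimateisoperisurf} for the uniform $C^1$ bound on $\Sigma_i$, which upgrades the convergence).

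The obstruction is that $\phi_i$ need not have mean zero on $\Sigma_i$, so it is not an admissible test function for the volume-preserving stability inequality. The standard fix is to subtract a constant: set $\psi_i = \phi_i - c_i$ where $c_i = \frac{1}{|\Sigma_i|}\int_{\Sigma_i}\phi_i\, d\sigma_i$. Since $\mathrm{Area}(\Sigma_i)\to\infty$ while $\phi_i$ is supported in the fixed compact set $\mathbf{K}$ with $\|\phi_i\|_\infty$ bounded, we have $c_i \to 0$ — in fact $|\Sigma_i|\, c_i = \int \phi_i$ stays bounded, so $c_i = O(1/|\Sigma_i|)$. Plugging $\psi_i$ into the stability inequality and expanding $\psi_i^2 = \phi_i^2 - 2c_i\phi_i + c_i^2$ gives
$$
\int_{\Sigma_i}\big(Ric(v)+\|A\|^2\big)\phi_i^2 \leq \int_{\Sigma_i}|\nabla\phi_i|^2 + 2c_i\int_{\Sigma_i}\big(Ric(v)+\|A\|^2\big)\phi_i - c_i^2\int_{\Sigma_i}\big(Ric(v)+\|A\|^2\big),
$$
(using $|\nabla \psi_i| = |\nabla \phi_i|$). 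The term $2c_i\int(Ric(v)+\|A\|^2)\phi_i \to 0$ because $c_i\to 0$ and the integral is over the fixed compact region $\mathbf{K}$ where everything is uniformly bounded. The last term $c_i^2\int_{\Sigma_i}(Ric(v)+\|A\|^2)$ is the delicate one: the integral is over all of $\Sigma_i$, not just $\mathbf{K}$. Here I would invoke the estimates already established: by Lemma \ref{squareintergableofA0} and the asymptotic expansion $Ric(v) = -2 + O(e^{-3\rho})$ together with inequality (\ref{finiteintegral1}), the quantity $\int_{\Sigma_i}|Ric(v)+\|A\|^2|$ is bounded by $\int_{\Sigma_i}(\|\AA\|^2 + (H^2-4)) + C\int_{\Sigma_i}e^{-3\rho} + $ (a term like $4\,\mathrm{Area}(\Sigma_i)$ coming from the part of $\|A\|^2 \geq H^2/2 \geq 2$ that is not controlled). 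The genuinely dangerous contribution is thus of size $c_i^2 \cdot \mathrm{Area}(\Sigma_i) = O(1/\mathrm{Area}(\Sigma_i)) \to 0$, since $c_i^2 = O(1/\mathrm{Area}(\Sigma_i)^2)$. Hence all error terms vanish in the limit.

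Finally I would pass to the limit $i\to\infty$: the left side $\int_{\Sigma_i}(Ric(v)+\|A\|^2)\phi_i^2 \to \int_{\mathbb{S}}(Ric(v)+\|A\|^2)\phi^2$ and the right side $\int_{\Sigma_i}|\nabla\phi_i|^2 \to \int_{\mathbb{S}}|\nabla\phi|^2$ by the local smooth convergence on $\mathbf{K}$, while the error terms go to zero by the discussion above, yielding
$$
\int_{\mathbb{S}}\big(Ric(v)+\|A\|^2\big)\phi^2 \leq \int_{\mathbb{S}}|\nabla\phi|^2
$$
for all $\phi \in C^\infty_0(\mathbb{S})$, which is the claim. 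The main obstacle, as indicated, is controlling the constant-subtraction error term $c_i^2\int_{\Sigma_i}(Ric(v)+\|A\|^2)$; this is exactly where the area-growth bound (Lemma \ref{areagrowthisosurf}) and the curvature integral bounds (Lemma \ref{squareintergableofA0}, inequality (\ref{finiteintegral1})) are essential, since without quantitative control on how fast $\int_{\Sigma_i}(Ric(v)+\|A\|^2)$ can grow relative to $\mathrm{Area}(\Sigma_i)^2$ the argument would fail.
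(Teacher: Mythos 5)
Your proposal is correct and follows essentially the same route as the paper: transplant $\phi$ to $\Sigma_i$, subtract the mean value $c_i$ to obtain an admissible test function for the volume-preserving stability inequality, expand the square, and pass to the limit using the local smooth convergence $\Sigma_i\to\mathbb{S}$. The only minor difference is in handling $c_i^2\int_{\Sigma_i}(Ric(v)+\|A\|^2)$: the paper observes this integral is uniformly bounded in $i$ (the $-2$ from $Ric(v)$ cancels the $+2$ from $H^2/2$, and the remainder is controlled by Lemma \ref{squareintergableofA0} and (\ref{finiteintegral1})), whereas you allow a possible linear growth in area and defeat it with $c_i^2=O(\mathrm{Area}(\Sigma_i)^{-2})$ --- both arguments work.
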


\begin{proof}
By a natural extension, we assume that $\phi$ is defined in the neighbourhood of $\mathbb{S}$. Then, by a restriction, we get $\phi_i \in C^\infty(\Sigma_i)$ with $\phi_i=\phi|_{\mathbb{S}}$ for $i$ large enough. And we notice that  $c_i=\frac{1}{Area(\Sigma_i)}\int_{\mathbb{S}}\phi_i\rightarrow 0$ as $i$ approaches to infinity and
$
\bar \phi_i =\phi_i-c_i
$
such that
 $$
 \int_{\Sigma_i}\bar \phi_i=0.
 $$
Then,  noticing that $\Sigma_i$  has least area among all surfaces enclosing the same volume as $\Sigma_i$ does, we have
$$
\int_{\Sigma_i}(Ric (v)+\|A\|^2)\bar \phi^2_i \leq \int_{\Sigma_i}|\nabla \bar \phi_i|^2.
$$
It implies

\begin{equation}\label{stableestimate1}
\begin{split}
&\int_{\Sigma_i}(Ric (v)+\|A\|^2)\phi_i ^2 +c^2 _i \int_{\Sigma_i}(Ric (v)+\|A\|^2)
-2c_i\int_{\Sigma_i}(Ric (v)+\|A\|^2)\phi_i \\
&\leq \int_{\Sigma_i}|\nabla \phi_i|^2.
\end{split}
\end{equation}
Due to Lemma \ref{squareintergableofA0} and (\ref{finiteintegral1}), we see that
$$
|\int_{\Sigma_i}(Ric (v)+\|A\|^2)|\leq \Lambda.
$$
Here, $\Lambda$ is a constant independent of $i$. Then, Taking $i$ tend to infinity, we get

$$
\int_{\mathbb{S}}(Ric (v)+\|A\|^2)\phi^2 \leq \int_{\mathbb{S}}|\nabla\phi|^2.
$$
Thus, we finish  proving the lemma.

\end{proof}

In particular, Proposition \ref{stableofs} has the following  interesting application. The similar result on the stability has been obtained in the asymptotically flat version in \cite{MJ2013}

\begin{coro}\label{strongstability}
The limiting surface $\mathbb{S}$ is strongly stable, i.e.
$$
\int_{\mathbb{S}}(Ric (v)+\|A\|^2)\phi^2 \leq \int_{\mathbb{S}}|\nabla\phi|^2.
$$
for any $\phi-C \in C^\infty_0 (\mathbb{S})$. Here, $C$ is any constant.
\end{coro}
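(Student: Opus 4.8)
The plan is to deduce Corollary \ref{strongstability} directly from Proposition \ref{stableofs} by an elementary density/approximation argument. The point is that Proposition \ref{stableofs} gives the stability inequality for compactly supported test functions $\phi \in C^\infty_0(\mathbb{S})$, whereas the corollary asks for the same inequality for functions of the form $\phi = C + \psi$ with $\psi \in C^\infty_0(\mathbb{S})$ and $C$ an arbitrary constant; equivalently, for test functions that are constant outside a compact set. Since $\nabla \phi = \nabla \psi$ and the left-hand side is quadratic in $\phi$, this is not a formal consequence of the $C^\infty_0$ version, and one genuinely needs to use that $\mathbb{S}$ is noncompact together with the decay/integrability estimates from Lemma \ref{finittotalcurvature}.

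The key steps I would carry out are as follows. First, reduce to showing
$$\int_{\mathbb{S}}(Ric(v)+\|A\|^2)(C+\psi)^2 \leq \int_{\mathbb{S}}|\nabla\psi|^2$$
for all $C \in \mathbb{R}$ and all $\psi \in C^\infty_0(\mathbb{S})$. Second, choose a sequence of logarithmic cutoff functions $\eta_j \in C^\infty_0(\mathbb{S})$ with $0\le \eta_j \le 1$, $\eta_j \equiv 1$ on the geodesic ball $B_j(p)\subset \mathbb{S}$ for a fixed basepoint $p$, and $\int_{\mathbb{S}}|\nabla \eta_j|^2 \to 0$ as $j\to\infty$; this is possible because $\mathbb{S}$ is complete, noncompact, and by Lemma \ref{finittotalcurvature} has at most quadratic-exponential area growth $Area(\mathbb{D}_\rho\cap\mathbb{S})\leq Ce^{2\rho}$, so a cutoff that decays like $\rho^{-1}$ (or even a suitably chosen exponential in $\rho$) over a long annulus has small Dirichlet energy — more precisely one uses the standard trick of spreading the cutoff over many dyadic-in-$\rho$ annuli. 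Third, apply Proposition \ref{stableofs} to the compactly supported function $\phi_j := C\eta_j + \psi$ (for $j$ large enough that $\psi$ is supported in $B_j(p)$, so $\psi\eta_j = \psi$), which gives
$$\int_{\mathbb{S}}(Ric(v)+\|A\|^2)(C\eta_j+\psi)^2 \leq \int_{\mathbb{S}}|\nabla(C\eta_j+\psi)|^2 = \int_{\mathbb{S}}|C\nabla\eta_j+\nabla\psi|^2.$$
Fourth, pass to the limit $j\to\infty$: the right-hand side tends to $\int_{\mathbb{S}}|\nabla\psi|^2$ by Cauchy–Schwarz together with $\int|\nabla\eta_j|^2\to0$ and the compact support of $\nabla\psi$; for the left-hand side, one uses that $\eta_j \uparrow 1$ pointwise and that the potential $Ric(v)+\|A\|^2$ is integrable on $\mathbb{S}$ (from $Ric(v) = -2 + O(e^{-3\rho})$, $\|A\|^2 = H^2 - 2\|\AA\|^2$... more carefully, $Ric(v)+\|A\|^2 = O(e^{-3\rho}) + \|\AA\|^2$ which is in $L^1(\mathbb{S})$ by Lemma \ref{squareintergableofA0} and (\ref{finiteintegral1})), so dominated convergence applies after splitting $(C\eta_j+\psi)^2 = C^2\eta_j^2 + 2C\eta_j\psi + \psi^2$; note $\psi$ being bounded with compact support makes the cross term and the $\psi^2$ term trivial to handle, and $C^2\eta_j^2 \uparrow C^2$ with the integrable majorant $C^2|Ric(v)+\|A\|^2|$.

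The main obstacle is the construction of the cutoff sequence $\eta_j$ with vanishing Dirichlet energy: this requires the area growth bound on $\mathbb{S}$ in an essential way, and one must be a little careful because $\mathbb{S}$ could a priori have exponential volume growth, so the naive linear cutoff on a single annulus of width one does not work — instead one cuts off gradually across a long range of the distance function $\rho$, choosing the profile (for instance proportional to $e^{-\rho}$, or to $\rho^{-1}\log$-type) so that the weighted integral $\int |\nabla\eta_j|^2 \le \int (\eta_j')^2\, |\nabla\rho|^2 \le \sum_k (\text{slope on annulus }k)^2 \cdot C e^{2k}$ can be made to telescope to something arbitrarily small. Once this cutoff lemma is in hand, the rest is routine dominated convergence. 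I would also remark that this is exactly the mechanism by which the classical passage from "stable" to "strongly stable" (or parabolicity-type arguments) works, and that it is the analogue of the asymptotically flat statement in \cite{MJ2013}.
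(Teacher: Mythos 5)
Your overall strategy coincides with the paper's (write the test function as $C\eta_j+\psi$ with $\eta_j$ a cutoff of small Dirichlet energy, apply Proposition \ref{stableofs}, and pass to the limit using the $L^1$-bound on the potential $Ric(v)+\|A\|^2=O(e^{-3\rho})+\|\AA\|^2$), and the limit-passing step is fine. The genuine gap is in the construction of the cutoffs $\eta_j$. You claim that the area growth $Area(\mathbb{D}_\rho\cap\mathbb{S})\le Ce^{2\rho}$ permits a profile spread over many annuli with $\int_{\mathbb{S}}|\nabla\eta_j|^2\to 0$, but your own estimate refutes this: if the drop of $\eta_j$ on the annulus $\{k\le\rho\le k+1\}$ is $d_k$ with $\sum_k d_k=1$, then Cauchy--Schwarz gives
\begin{equation*}
\sum_{k\ge R} d_k^2\, Ce^{2k}\;\ge\; C\Bigl(\sum_{k\ge R} d_k\Bigr)^2\Bigl(\sum_{k\ge R} e^{-2k}\Bigr)^{-1}\;\ge\; c\,e^{2R},
\end{equation*}
which blows up rather than tends to zero; the exponential profiles you suggest ($e^{-\rho}$ or $\rho^{-1}$) likewise give divergent energy against the weight $e^{2\rho}$. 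A surface with genuine exponential area growth (e.g.\ $\mathbb{H}^2$) is non-parabolic and admits no such cutoffs, so no choice of profile based on the area bound alone can close this step.

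The ingredient you are missing is the \emph{finite total curvature} statement in Lemma \ref{finittotalcurvature}, not its area bound. By Huber's theorem, $\int_{\mathbb{S}}|K|<\infty$ forces $\mathbb{S}$ to be conformally a compact Riemann surface $\bar{\mathbb{S}}$ with finitely many punctures; a puncture has zero capacity, so the classical logarithmic cutoff $\xi=(\log r-\log r_2)/(\log r_1-\log r_2)$ in small balls around the puncture of $\bar{\mathbb{S}}$ has Dirichlet energy $\le C/\log(r_1/r_2)\to 0$, and by the conformal invariance of the two-dimensional Dirichlet integral this energy is unchanged when $\xi$ is viewed on $(\mathbb{S},g_{\mathbb{S}})$. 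This is exactly how the paper proves the corollary. So your argument is repairable, but only by replacing the volume-growth mechanism with the Huber/conformal-invariance mechanism; as written, the key cutoff lemma you rely on is false under the hypotheses you invoke.
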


\begin{proof}
We will use "logarithmic cut-off trick" (see P.121 in \cite{Sch83}) to prove this corollary. In fact, by Lemma \ref{finittotalcurvature} and Huber's theorem (see \cite{H}), we know that $\mathbb{S}$ is conformally equivalent to a  surface $\mathbb{S}$ obtained through deleting finite points from compact Riemann surface $\bar {\mathbb{S}}$. Without loss of generality, we can assume that  we take $\bar {\mathbb{S}}\setminus \{p\}=\mathbb{S}$. For simplicity, we assume $\phi-1 \in C^\infty _0 (\mathbb{S})$ (which is denoted by $C^\infty _0 (\bar {\mathbb{S}}\setminus \{p\})$ in the following). Let $B_{r_i}(p)$, $i=1$, $2$ be two geodesic balls with centered at $p$ and radius $r_i$ in $\bar {\mathbb{S}}$. Define

$$
\xi(r)=\left\{
         \begin{array}{ll}
           0& \text { $r\leq r_2$} \\
          \frac{\log r-\log r_2}{\log r_1 -\log r_2}& \text { $r \in [r_2, ~r_1]$} \\
          \phi& \text { $r\geq r_1$}
         \end{array}
       \right.
$$

Choosing a suitable Lipschtiz function $\xi$ with compact support set and together with Lemma \ref{stableofs}, we obtain
$$
\int_{\mathbb{S}}(Ric (v)+\|A\|^2)\xi^2 d\sigma \leq \int_{\mathbb{S}}|\nabla\xi|^2 d\sigma
$$
Here, $d\sigma$ is volume element with respect to metric $g|_\mathbb{S}$. Hence, we have
\begin{equation}
\begin{split}
&\int_{\bar {\mathbb{S}}\setminus B_{r_1}(p)}(Ric (v)+\|A\|^2)\xi^2 d\sigma +\int_{ B_{r_1}\setminus B_{r_2} (p)}(Ric (v)+\|A\|^2)\xi^2 d\sigma\\
&\leq \int_{\bar {\mathbb{S}}\setminus B_{r_1}(p)}|\nabla\xi|^2 d\sigma +\int_{ B_{r_1}\setminus B_{r_2} (p)}|\nabla\xi|^2 d\sigma\\
&=\int_{\bar {\mathbb{S}}\setminus B_{r_1}(p)}|\nabla\phi|^2 d \sigma +\int_{ B_{r_1}\setminus B_{r_2} (p)}|\nabla\xi|^2 d\bar{\sigma}.
\end{split}
\end{equation}
Here, $d\bar{\sigma}$ denotes the volume element in Riemannian surface $\bar {\mathbb{S}}$.  Note that $\xi$ is bounded, and
$$
\int_{ B_{r_1}\setminus B_{r_2} (p)}|\nabla\xi|^2 d\bar{\sigma} \leq \frac{C}{\log \frac{r_1}{r_2} }.
$$
Here, we have used the conformal invariance of Dirichlet integral. Take $r_1$, and $r_2$ sufficiently small and its ratio sufficiently large, we see
that the above integral approaches to zero. Together with Lemma \ref{squareintergableofA0}, we obtain

$$
\int_{\mathbb{S}}\left(Ric (v)+\|A\|^2\right)\phi^2 \leq \int_{\mathbb{S}}|\nabla\phi|^2.
$$
\end{proof}

\section{No drift off to the infinity}

In this section, we will show that a connected isoperimetric region with uniformly positive lower bound of volume in AH manifold $(M^3, g)$ with scalar curvature $R\geq -6$ cannot drift off to the infinity provided that $(M^3, g)$ is not isometric to $\mathbb{H}^3$. More precisely, we have

\begin{prop}\label{nodrift}
Let$(M^3, g)$ be an AH with $R(g)\geq -6$ which is not isometric to $\mathbb{H}^3$. Let $\{D_i\}$ be a family of connected isoperimetric regions with $\mathcal{H}_g ^3(D_i)\geq \delta_0$. Here, $\mathcal{H}_g ^3(\ \cdot \  )$ is the Lebesgue measure on $(M^3, g)$ with respect to metric $g$ and $\delta_0$ is a positive fixed constant. Then, $\{D_i\}$ cannot drift off to the infinity of $(M^3, g)$ i.e. There is a fixed compact domain $E$ so that each $D_i$ intersects $E$.
\end{prop}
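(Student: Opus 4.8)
The plan is to argue by contradiction. Suppose, passing to a subsequence, that $\{D_i\}$ does drift off to infinity, i.e. for every compact domain $E\subset M$ we have $D_i\cap E=\emptyset$ for all large $i$. Since $\mathcal{H}^3_g(D_i)\geq\delta_0>0$, each $D_i$ carries a fixed amount of volume but sits arbitrarily far out in the end. The first step is to compare $D_i$ against a competitor supported near the essential set $\mathbb{D}$: let $v_i=\mathcal{H}^3_g(D_i)$ and let $\Omega_i\subset M$ be an isoperimetric region realizing $A_g(v_i)$ whose boundary passes through $\mathbb{D}$ (exists by standard existence theory, after possibly a small shift of $v_i$; alternatively compare with coordinate spheres $\mathbb{D}_\rho$ of the right volume). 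Because $D_i$ lies in a region where the metric is exponentially close to the hyperbolic metric $dr^2+\sinh^2r\,\sigma+\tfrac{1}{3\sinh r}h+O(e^{-3r})$, the isoperimetric ratio of any region of fixed volume $v_i$ placed that far out is, up to an error $O(e^{-3\rho_i})$ with $\rho_i\to\infty$, the same as in $\mathbb{H}^3$; hence $\mathrm{Area}(\partial D_i)\geq A_{\mathbb{H}}(v_i)-o(1)$, while placing a region of volume $v_i$ near $\mathbb{D}$ does strictly better by a definite amount controlled by the renormalized volume $V(M,g)>0$ (which is positive precisely because $(M^3,g)$ is not isometric to $\mathbb{H}^3$, by the rigidity statement recalled after Theorem \ref{differav0}).

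More precisely, the key quantitative input is Theorem \ref{differav0} together with Lemma \ref{largev}: for $v$ large, $A_g(v)-A_{\mathbb{H}}(v)\to -2V(M,g)<0$, so the ambient manifold is \emph{strictly more efficient} at enclosing large volumes than $\mathbb{H}^3$ is, by a gap of essentially $2V(M,g)$. On the other hand, if $D_i$ has drifted off, its boundary $\Sigma_i=\partial D_i$ is contained in $M\setminus\mathbb{D}_{\rho_i}$ with $\rho_i\to\infty$, and the metric there differs from the hyperbolic metric by $O(e^{-3\rho_i})$; combined with the area growth bound $\mathrm{Area}(\Sigma_i\cap\mathbb{D}_\rho)\leq\Lambda e^{2\rho}$ from Lemma \ref{areagrowthisosurf}, this forces
$$
\mathrm{Area}(\Sigma_i)\;\geq\;A_{\mathbb{H}}(v_i)\;-\;C\,e^{-\rho_i}\,A_{\mathbb{H}}(v_i)^{?}\;=\;A_{\mathbb{H}}(v_i)-o(1),
$$
i.e. the drifted region is asymptotically no better than the hyperbolic model. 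But $D_i$ is isoperimetric in $(M^3,g)$, so $\mathrm{Area}(\Sigma_i)=A_g(v_i)=A_{\mathbb{H}}(v_i)-2V(M,g)+o(1)$. Comparing the two displays yields $2V(M,g)\leq o(1)$, hence $V(M,g)=0$, hence by the rigidity of renormalized volume $(M^3,g)$ is isometric to $\mathbb{H}^3$, contradicting the hypothesis. This contradiction shows $\{D_i\}$ cannot drift off, so some fixed compact $E$ meets every $D_i$.

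The main obstacle is making the comparison ``a region of volume $v_i$ sitting far out in the end encloses at best as efficiently as in $\mathbb{H}^3$'' fully rigorous when $D_i$ is not assumed to be a nice coordinate ball: one must control the part of $\Sigma_i$ that might reach back toward $\mathbb{D}$, or might have complicated topology, and convert the $O(e^{-3\rho})$ metric asymptotics into a genuine area estimate $\mathrm{Area}_g(\Sigma_i)\geq(1-Ce^{-3\rho_i})\,\mathrm{Area}_{\mathbb{H}}(\widetilde\Sigma_i)\geq A_{\mathbb{H}}(v_i)(1-o(1))$ for the corresponding hyperbolic competitor of the same volume. Here Lemma \ref{areagrowthisosurf} (area growth), Lemma \ref{finiteintegoftangentialprho} (so that $\Sigma_i$ is, in an integrated sense, nearly a graph over $\partial\mathbb{D}_\rho$), and Lemma \ref{largev} do the heavy lifting; the subtlety is uniformity of all constants in $i$ and the interplay between the volume $v_i\to\infty$ and the depth $\rho_i\to\infty$. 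Once this ``no better than hyperbolic for drifted regions'' estimate is in place, the positivity of $V(M,g)$ closes the argument immediately.
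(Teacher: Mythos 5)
Your overall strategy for the large-volume case matches the paper's: if $D_i$ drifts off into the region where $\|g-g_{\mathbb{H}}\|=O(e^{-3\rho})$, then $A_g(v_i)\geq A_{\mathbb{H}}(v_i)-o(1)$, while Lemma \ref{largev} gives $A_g(v_i)\leq A_{\mathbb{H}}(v_i)-2V(M,g)+o(1)$ as $v_i\to\infty$, and $V(M,g)>0$ because $(M^3,g)$ is not isometric to $\mathbb{H}^3$; this is exactly Case 1 of the paper's proof.

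However, there is a genuine gap: the hypothesis is only $\mathcal{H}^3_g(D_i)\geq\delta_0$, not $\mathcal{H}^3_g(D_i)\to\infty$, so you must also handle the case where the volumes $v_i$ stay in a bounded interval $[\delta_0,C]$. There Lemma \ref{largev} and Theorem \ref{differav0} say nothing (both are asymptotic statements as $v\to\infty$), so the ``gap of $2V(M,g)$'' you rely on is simply not available, and your contradiction evaporates. The paper closes this case differently: after passing to a subsequence with $v_i\to v_0>0$, the drifting assumption again forces $A_{\mathbb{H}}(v_i)-A_g(v_i)\to 0$, hence $A_g(v_0)=A_{\mathbb{H}}(v_0)$ for a \emph{finite} volume $v_0$, and then the rigidity part of Proposition \ref{areacomparison} (proved via the weak inverse mean curvature flow and Hawking mass monotonicity, not via renormalized volume) yields that $(M^3,g)$ is isometric to $\mathbb{H}^3$, a contradiction. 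Your proposal never invokes Proposition \ref{areacomparison}, and without it the bounded-volume case cannot be concluded; the renormalized-volume rigidity you cite is not a substitute, since for bounded $v_i$ you have no way to relate $A_{\mathbb{H}}(v_i)-A_g(v_i)$ to $V(M,g)$. You should add this second case explicitly.
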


In order to prove Proposition \ref{nodrift}, we need to introduce the following notions.

\begin{defi}\label{isoprofile}
The isoperimetric profile of $(M^3,g)$ with volume $v$ is defined as
\begin{eqnarray*}
A(v)&=&\inf\{\mathcal{H}^2(\partial^{\ast}\Omega):\Omega\subset M ~is~ a~ Borel~ set~ with~\\
 &&finite ~perimeter~ and~ \mathcal{H}_g ^3(\Omega)=v\}.
\end{eqnarray*}
 Here, $\mathcal{H}^2(\ \cdot\ )$ is a 2-dim Hausdorff measure for the reduced boundary of $\Omega$ denoted by $ \partial^{\ast}\Omega$. A Borel set $\Omega\subset M$ of finite perimeter such that $\mathcal{H}_g ^3(\Omega)=v$
and $A(v)=\mathcal{H}^2(\partial^{\ast}\Omega)$ is called an isoperimetric region of $(M,g)$ of volume $v$. The surface $\partial\Omega$ is called isoperimetric surface.
\end{defi}

The main argument of proof of Proposition \ref{nodrift} comes from \cite{BC} and \cite{OC}. The following proposition is crucial to us and also has its own interest.

\begin{prop}\label{areacomparison}
 Suppose that $(M^3,g)$ is an AH manifold with $R\geq -6$. Then, $A_g(v)\leq A_{{\mathbb{H}}}(v)$ for every $v>0$. Moreover, if there exists a $v_0>0$ satisfying $A_g(v_0)= A_{{\mathbb{H}}}(v_0)$, then $(M^3,g)$ is isometric to $\mathbb{H}^3$.
\end{prop}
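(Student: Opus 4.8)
\textbf{Proof proposal for Proposition \ref{areacomparison}.}

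The plan is to exploit the monotonicity of the Hawking mass (or an equivalent quasi-local mass quantity) under a suitable weak inverse mean curvature flow, or alternatively to directly compare the isoperimetric profile of $(M^3,g)$ with that of $\mathbb{H}^3$ via a calibration/renormalized-volume argument in the spirit of \cite{BC} and \cite{OC}. Concretely, for a fixed $v>0$ let $\Omega\subset M$ be an isoperimetric region of volume $v$ with (possibly singular) boundary $\Sigma=\partial^\ast\Omega$, so $A_g(v)=\mathcal{H}^2(\Sigma)$. One first records the key scalar-curvature inequality: since $R\geq -6$, the Gauss equation together with the stability (in fact minimality up to the volume constraint) of $\Sigma$ gives a bound of the form
$$
\int_\Sigma\left(\frac{H^2}{4}-1\right)d\sigma\geq \int_\Sigma K\,d\sigma-\int_\Sigma\left(\frac{R}{2}+\frac{\|\AA\|^2}{2}+\frac{H^2}{4}\right)d\sigma+\text{(lower order)},
$$
so that the defect $A_{\mathbb{H}}(v)-A_g(v)$ is controlled by an integral of nonnegative quantities. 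The cleanest route is: (i) use the AH asymptotics of Definition \ref{definitionofahmfd} to construct, for the given $v$, the model comparison ball $B_v\subset\mathbb{H}^3$ of the same volume, whose boundary sphere has constant mean curvature $H_v>2$ and area $A_{\mathbb{H}}(v)$; (ii) run the standard ODE comparison for the function $v\mapsto A_g(v)$ using the second variation of area with the volume constraint — the Heintze–Karcher / isoperimetric comparison — to get a differential inequality $A_g'(v)\leq A_{\mathbb{H}}'(A_{\mathbb{H}}^{-1}(A_g(v)))$ or similar; (iii) integrate from $v=0$, where both profiles agree to leading order (both behave like the Euclidean profile for small $v$ since the metric is smooth), to conclude $A_g(v)\leq A_{\mathbb{H}}(v)$ for all $v$.

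For the rigidity statement, suppose $A_g(v_0)=A_{\mathbb{H}}(v_0)$ for some $v_0>0$. Then equality must propagate: by the integrated differential inequality, $A_g(v)=A_{\mathbb{H}}(v)$ for all $v\in(0,v_0]$, and all the intermediate nonnegative integrands must vanish on the corresponding isoperimetric boundaries. This forces $R\equiv -6$ along these surfaces, $\AA\equiv 0$ (so they are totally umbilic with $H\equiv 2$ after rescaling, hence geodesic spheres of the model), and the foliation by isoperimetric surfaces of volume $\leq v_0$ sweeps out a region isometric to a hyperbolic ball. A unique-continuation / connectedness argument then upgrades this to a global isometry $(M^3,g)\cong\mathbb{H}^3$: the set of radii at which the enclosed region is isometric to a hyperbolic ball is nonempty, open, and closed. (One may instead invoke the positive mass theorem for AH manifolds of \cite{WXD}: equality in the isoperimetric comparison forces the total mass to vanish, whence $(M^3,g)\cong\mathbb{H}^3$.)

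The main obstacle I anticipate is step (ii): making the ODE comparison rigorous in the presence of possible singularities of isoperimetric boundaries and of the non-smoothness of $v\mapsto A_g(v)$ (which is only locally Lipschitz and twice differentiable almost everywhere). One must work with the lower and upper second derivatives of $A_g$ in the barrier sense, use that isoperimetric regions in a $3$-manifold have smooth boundary (so singularities are not actually an issue in dimension $3$), and carefully handle the volume-constraint Lagrange multiplier (the constant mean curvature $H$) in the stability inequality — this is where the hypothesis $R\geq -6$ enters decisively, and where the AH expansion is needed to control the error terms $O(e^{-3\rho})$ that appear when $\Sigma$ has parts far out in the end. A secondary subtlety is ensuring the comparison is valid for \emph{all} $v>0$ and not merely large $v$; for small $v$ this is the local Euclidean-type isoperimetric inequality, and one needs these two regimes to match up, which is why integrating the differential inequality from $v=0^+$ is the natural device.
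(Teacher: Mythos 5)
Your proposal correctly identifies the two candidate strategies, but the route you actually develop --- an ODE comparison for the isoperimetric profile $v\mapsto A_g(v)$ itself, via the second variation of area of an isoperimetric boundary --- has a genuine gap that the paper's argument is specifically designed to avoid. To differentiate $A_g$ (even in the barrier sense) and feed the stability inequality into a Gauss--Bonnet estimate, you need an isoperimetric region of volume $v$ to \emph{exist} for every $v$, and you need its boundary to be connected so that $\int_\Sigma K\leq 4\pi$. In a noncompact asymptotically hyperbolic manifold neither is available a priori: whether minimizers exist or drift off to infinity is precisely the phenomenon this paper is analyzing, so an argument for Proposition \ref{areacomparison} cannot presuppose it (indeed Proposition \ref{areacomparison} is itself an input to the no-drift result, Proposition \ref{nodrift}, so the logic would become circular). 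For the same reason ``integrating from $v=0^+$'' is not enough: the differential inequality must be available on a full-measure set of volumes, with a minimizer in hand at each of them.

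The paper instead follows Brendle--Chodosh and Shi: it builds an explicit competitor family, namely the weak (level-set) inverse mean curvature flow $(G_t)$ started from a single point $x$ in the end. Geroch monotonicity with $R\geq -6$ and point initial data gives $m(v)\geq 0$ along the flow, hence the differential inequality of Proposition \ref{derivativebtov} for $B(v)=Area(\partial G_{t(v)})$; an $\epsilon$-perturbed ODE comparison then yields $B(v)\leq A_{\mathbb{H}}(v)$, and $A_g(v)\leq A_e(v)\leq B(v)$ because the flow surfaces are admissible competitors. Two technical points you do not address are essential here: the jump times of the weak flow (handled via the monotonicity of $A_e$, Lemma \ref{nondecreasingav}) and the rigidity statement, which in the paper is a clean contrapositive --- if $Ric(x)\neq -2g$ at some $x$, the Hawking mass along the flow from $x$ is strictly positive for all $v>0$, so $A_g(v_0)\leq B(v_0)<A_{\mathbb{H}}(v_0)$. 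Your proposed rigidity argument (propagation of equality along an isoperimetric foliation plus unique continuation, or deducing vanishing of the total mass from equality at a single volume) would require substantial additional justification and again presupposes existence and regularity of a family of minimizers.
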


\begin{rema} In fact, similar result is still true as $v$ approaches to infinity(For details, see Theorem 1 in \cite{BC});
  For the case of asymptotically flat, we still have similar result(see Theorem 1.2 and Theorem 1.3 in \cite{Shi}).
\end{rema}

As in \cite{BC} and \cite{OC}, we will make use of inverse mean curvature flow to investigate Proposition \ref{areacomparison} (see also \cite{Shi} for asymptotically flat manifolds case). In facts, the idea and argument are from \cite{BC}. However, for the convenience of application in our context, we proceed as the second author did in \cite{Shi}. A classical solution of the inverse mean curvature flow is a smooth family $F: N\times [0,T]\rightarrow M$ of embedded hypersurfaces $N_t = F(N, t)$ satisfying the following evolution equation
\begin{eqnarray}\label{imcf}
\frac{\partial F}{\partial t}=\frac{\nu}{H},~~~~~~~~~~0\leq t\leq T,
\end{eqnarray}
where $H$ is the mean curvature of $N_t$ at $F(x, t$) with respect to the outward
unit vector $\nu$ for any $x\in N$. Specifically, Hawking mass plays an important role in the theory of inverse mean curvature flow.

\begin{defi}\label{hawkingmass1}
The Hawking mass is of a surface $\Sigma$ is defined as

\begin{equation}\label{mass}
m_H(\Sigma)=\frac{\mathcal{H}^2(\Sigma)^{\frac{1}{2}}}{(16\pi)^{\frac{3}{2}}}\left(16\pi-\int_{\Sigma}(H^2-4)d\mu\right).
\end{equation}
\end{defi}

Generally, the evolution equation \eqref{imcf} has no
classical solution. In order to overcome this difficulty, Huisken and Ilmanen introduced a level-set formulation of \eqref{imcf} in the setting of asymptotically flat manifolds (\cite{HI})where the evolving surfaces are given as level-sets of a scalar function $u$ via $N_t = \partial\{x : u(x) < t\}$ and $u$ satisfies the following elliptic equation in weak sense
\begin{eqnarray}\label{weaksolution}
div(\frac{\nabla u}{|\nabla u|})=|\nabla u|.
\end{eqnarray}

We note that the similar argument works well in AH case. More precisely, by Theorem 4.1 in \cite{OC}, let $B_\mu(x)$ be geodesic ball with any small radius $\mu>0$ and center $x$  in $(M, g)$ and $\Sigma=\p B_\mu(x)$. Then there exists weak solution of inverse mean curvature flow $u$ with initial condition $\{u=0\}=\Sigma$ and satisfying all other properties listed in Theorem 4.1 in \cite{OC}, as proof in Lemma 8.1 in \cite{HI}, we get $(G_t)_{-\infty <t <\infty}$ which is the weak solution of (\ref{imcf}) with single point $\{x\}$  as it's initial condition.

\begin{lemm}\label{imcfvol}
For any $v>0$ either there exists $t$ such that $Vol (G_t)=v$  or  $v$ is a jump  volume for (\ref{imcf}), i.e. there exits $t_1>-\infty$ such that
$$
Vol(G_{t_1})<v\leq Vol(G^+_{t_1}).
$$
Here, $G^+_{t_1}$ is the strictly minimizing hull for $G_{t_1}$.
\end{lemm}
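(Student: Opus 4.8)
The plan is to reduce the statement to an elementary analysis of the volume function $V(t):=Vol(G_t)$ along the weak flow $(G_t)_{-\infty<t<\infty}$. First I would collect the structural properties of this flow that follow from Theorem 4.1 in \cite{OC} together with the construction of the flow having a point as initial condition (as in Lemma 8.1 in \cite{HI}): the regions $G_t$ are nested and increasing in $t$, with $G_t=\bigcup_{s<t}G_s$, so that $V$ is non-decreasing and left-continuous; $V(t)\to 0$ as $t\to-\infty$, since $\partial G_t$ collapses to the point $\{x\}$; $V(t)\to+\infty$ as $t\to+\infty$, since the weak flow exhausts $M$; and, at every time $t$, $\lim_{s\to t^+}V(s)=Vol(G^+_t)$, where $G^+_t$ is the strictly minimizing hull of $G_t$. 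This last identity is the analytic heart of the matter: after the possible jump at time $t$ the smooth part of the flow resumes from $\partial G^+_t$, so that $G^+_t=\bigcap_{s>t}G_s$ up to a set of measure zero, and the claimed limit follows by monotone convergence.

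With these facts in hand the argument is short. Given $v>0$, I would set $t_1:=\inf\{t:\ V(t)\geq v\}$; this infimum is finite by the behaviour of $V$ at $\pm\infty$. By left-continuity $V(t_1)\leq v$, because every $V(s)$ with $s<t_1$ is $<v$; by monotonicity $V(s)\geq v$ for all $s>t_1$, so letting $s\to t_1^+$ gives $Vol(G^+_{t_1})\geq v$. Now I split into cases. If $V(t_1)=v$, then the first alternative holds with $t=t_1$. Otherwise $V(t_1)<v$, and combining this with the previous inequality yields
\[ Vol(G_{t_1})=V(t_1)<v\leq Vol(G^+_{t_1}), \]
which is precisely the statement that $v$ is a jump volume, with $t_1>-\infty$. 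This completes the dichotomy.

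I expect the main obstacle to be the two inputs that are genuinely geometric rather than formal: first, that the weak inverse mean curvature flow issuing from a point really does sweep out all of the asymptotically hyperbolic manifold, so that $V(t)\to+\infty$ — this uses the large-scale geometry of $(M^3,g)$ and should be extracted from the existence/exhaustion portion of Theorem 4.1 in \cite{OC}; and second, the exact identification of $\lim_{s\to t^+}V(s)$ with $Vol(G^+_t)$, which rests on the jump description and the strictly-minimizing-hull structure theory for weak solutions in \cite{HI}. Once these are in place, the remainder is just the elementary monotone-function argument above.
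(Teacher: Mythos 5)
Your argument is correct, and it is essentially the intended one: the paper gives no proof of this lemma, deferring to Lemma 3.3 of \cite{Shi}, where the statement is established by exactly this elementary analysis of the monotone, left-continuous volume function $t\mapsto Vol(G_t)$ together with the identification of the right-hand limit with $Vol(G^+_t)$ from the strictly minimizing hull structure of \cite{HI}. The two geometric inputs you flag (exhaustion of $M$ by the weak flow from a point, and the jump description at non-smooth times) are indeed the only non-formal ingredients, and they are supplied by Theorem 4.1 of \cite{OC} and Lemma 8.1 of \cite{HI} as you indicate.
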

and

\begin{lemm}\label{functiontofv}
For any $v>0$, let
\begin{equation}
t(v)=\inf\{\tau: Vol(G_\tau)\geq v\}\nonumber.
\end{equation}
Then, $t(v)$ is a Lipschitz function and
\begin{equation}
\frac{dt}{dv}\leq \left(\int_{\Sigma_t}H^2 \right )^\frac12 \cdot \left( Area(\Sigma_t) \right)^{-\frac32}.\nonumber
\end{equation}
Here, $\Sigma_t=\p G_t$.
\end{lemm}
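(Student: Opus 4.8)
The plan is to combine the standard evolution identities for the weak inverse mean curvature flow with a single H\"older inequality, and then invert the volume function. Write $v(\tau)=Vol(G_\tau)$ and $\Sigma_\tau=\partial G_\tau$. From the weak formulation of \cite{HI}, carried over to the AH setting in Theorem 4.1 of \cite{OC}, the function $v(\tau)$ is non-decreasing, the areas grow exponentially, $Area(\Sigma_\tau)=e^{\tau-\tau_0}Area(\Sigma_{\tau_0})$ away from the initial point, and for a.e.\ $\tau$ the flow is smooth with
$$
\frac{dv}{d\tau}=\int_{\Sigma_\tau}\frac{1}{H}\,d\mu .
$$
At the (at most countably many) jump times of Lemma \ref{imcfvol} the volume only increases, hence does not affect the one-sided estimate we are after; so it suffices to prove the pointwise bound at a.e.\ $\tau$ and then pass to the inverse function.

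First I would apply H\"older's inequality with exponents $\tfrac32$ and $3$. Using $1=H^{-2/3}\cdot H^{2/3}$ on $\Sigma_\tau$,
$$
Area(\Sigma_\tau)=\int_{\Sigma_\tau}1\,d\mu\le\Big(\int_{\Sigma_\tau}\frac{1}{H}\,d\mu\Big)^{2/3}\Big(\int_{\Sigma_\tau}H^2\,d\mu\Big)^{1/3},
$$
which rearranges to
$$
\frac{dv}{d\tau}=\int_{\Sigma_\tau}\frac{1}{H}\,d\mu\ \ge\ \Big(\int_{\Sigma_\tau}H^2\,d\mu\Big)^{-1/2}Area(\Sigma_\tau)^{3/2}.
$$

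Next, to see that $t(v)$ is (locally) Lipschitz I would show that the right-hand side stays bounded below by a positive constant whenever $v$ ranges over a compact subset of $(0,\infty)$. On one hand, $\int_{\Sigma_\tau}H^2\,d\mu=\int_{\Sigma_\tau}(H^2-4)\,d\mu+4\,Area(\Sigma_\tau)$, and the monotonicity of the Hawking mass $m_H$ along the weak flow --- this is exactly where $R\ge-6$ is used, cf.\ \cite{OC} --- together with $m_H(\Sigma_\tau)\to 0$ as the initial surface shrinks to the point $\{x\}$ gives $\int_{\Sigma_\tau}(H^2-4)\,d\mu\le 16\pi$, so $\int_{\Sigma_\tau}H^2\,d\mu\le 16\pi+4\,Area(\Sigma_\tau)$. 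On the other hand, by Lemma \ref{imcfvol} the map $v\mapsto t(v)$ varies continuously and $Area(\Sigma_{t(v)})$ stays in a compact subset of $(0,\infty)$ as $v$ does. Hence $\frac{dv}{d\tau}\ge c(v)>0$ along the flow, $v(\tau)$ is strictly increasing there and its generalized inverse $t(v)=\inf\{\tau:\ v(\tau)\ge v\}$ does not jump; it is therefore locally Lipschitz, with $\frac{dt}{dv}=\big(\frac{dv}{d\tau}\big)^{-1}$ at a.e.\ point. Combined with the displayed lower bound for $\frac{dv}{d\tau}$ this yields
$$
\frac{dt}{dv}\ \le\ \Big(\int_{\Sigma_t}H^2\,d\mu\Big)^{1/2}Area(\Sigma_t)^{-3/2},
$$
as claimed.

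The step needing genuine care, rather than routine estimation, is the passage through the weak (level-set) solution: the identity $\frac{dv}{d\tau}=\int_{\Sigma_\tau}H^{-1}$, the exponential area law, and the monotonicity of $m_H$ all hold only in the a.e./weak sense, and they must be meshed with the jump structure described in Lemma \ref{imcfvol} in order to control $t(v)$ across the jump volumes and to identify $dt/dv$ with $(dv/d\tau)^{-1}$. These are precisely the facts imported from \cite{HI} (see in particular Lemma 8.1 there) and their asymptotically hyperbolic analogue in \cite{OC}, so I would invoke them rather than redevelop the theory.
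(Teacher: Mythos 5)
Your argument is correct and is essentially the proof the paper intends: the paper does not prove this lemma itself but cites Lemma 3.4 of \cite{Shi}, whose proof is exactly this combination of the co-area identity $\frac{dv}{d\tau}=\int_{\Sigma_\tau}H^{-1}$ for the weak flow with H\"older's inequality (exponents $\tfrac32$ and $3$) and the Geroch monotonicity to control $\int_{\Sigma_\tau}H^2$. Your handling of the jump times and of the local Lipschitz property matches that treatment, so nothing further is needed.
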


For the proof of two lemmas above (see Lemma 3.3 and Lemma 3.4 in \cite{Shi} respectively). For each $t$, we let $B(t)=Area(\Sigma_t)$. Then, by Lemma \ref{functiontofv}, we regard $B$ denoted by $B(v)$ as the function of $v$ and $m(v)$ as the Hawking mass of $\Sigma_{t(v)}$.
Then, by the same arguments in the proof of Theorem 1.2 and Theorem 1.3 in \cite{Shi}, we see that

\begin{prop}\label{derivativebtov}
For each $v\geq 0$, we have
\begin{equation}\label{BV}
\frac{d}{dv}B(v)\leq B^{-\frac{1}{2}}(v)\left(16\pi+4B(v)-\frac{(16\pi)^{\frac{3}{2}}}{B^{\frac{1}{2}}(v)}m(v)\right)^{\frac{1}{2}}.
\end{equation}
\end{prop}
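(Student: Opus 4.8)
\textbf{Proof proposal for Proposition \ref{derivativebtov}.}

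The plan is to combine the monotonicity of the Hawking mass along weak inverse mean curvature flow with the two elementary lemmas just stated, exactly as in the asymptotically flat setting of \cite{Shi}. First I would recall that along the (weak) flow $(G_t)$ with single point $\{x\}$ as initial condition, the Geroch/Hawking monotonicity holds: in the AH setting with $R\geq -6$ the quantity $m_H(\Sigma_t)$ is nondecreasing in $t$ (this is the AH analogue of the Huisken--Ilmanen monotonicity, and in fact gives $m(v)\geq 0$ since the flow starts from a point); I would invoke this together with the exponential area growth $B(t)=Area(\Sigma_t)$, and the fact that the Willmore-type energy $\int_{\Sigma_t}H^2$ can be read off from the definition of the Hawking mass. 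Indeed, solving \eqref{mass} for $\int_{\Sigma_t}H^2$ gives
$$\int_{\Sigma_t}H^2\,d\mu = 16\pi + 4B(v) - \frac{(16\pi)^{3/2}}{B^{1/2}(v)}\,m(v),$$
where I have used $\int_{\Sigma_t}1\,d\mu = B(v)$ and $m(v)=m_H(\Sigma_{t(v)})$.

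Next I would use the chain rule $\frac{d}{dv}B(v) = \frac{dB}{dt}\cdot\frac{dt}{dv}$. Along the flow the first factor is controlled by the standard evolution of area under inverse mean curvature flow, $\frac{dB}{dt} = \int_{\Sigma_t} \frac{1}{H}\cdot H \,d\mu = B(t)$ in the smooth case, with the corresponding weak inequality $\frac{dB}{dt}\leq B(t)$ surviving the passage to weak solutions and jump times (this is where one must be slightly careful: at a jump time the surface replaces $G_{t_1}$ by its strictly minimizing hull $G^+_{t_1}$, but $Area$ does not increase under this replacement, so the inequality is preserved — see Lemma \ref{imcfvol}). For the second factor I would apply Lemma \ref{functiontofv} verbatim:
$$\frac{dt}{dv}\leq \Big(\int_{\Sigma_t}H^2\Big)^{1/2}\cdot\big(Area(\Sigma_t)\big)^{-3/2}.$$
Multiplying the two estimates,
$$\frac{d}{dv}B(v)\leq B(v)\cdot\Big(\int_{\Sigma_t}H^2\Big)^{1/2}B(v)^{-3/2} = B(v)^{-1/2}\Big(\int_{\Sigma_t}H^2\Big)^{1/2},$$
and substituting the expression for $\int_{\Sigma_t}H^2$ displayed above yields exactly \eqref{BV}. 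A minor point to address is that $B$ and $t$ are only Lipschitz, so these identities hold almost everywhere in $v$; since both sides of \eqref{BV} are (locally) integrable this is enough for the intended applications, but I would state the inequality as holding for a.e. $v$, or equivalently in the sense of the upper Dini derivative.

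The main obstacle is not the computation, which is routine once the ingredients are assembled, but rather justifying the Hawking-mass monotonicity and the area evolution \emph{in the weak/AH setting} with care at the jump times — i.e. verifying that the Huisken--Ilmanen machinery (as adapted in \cite{OC}, Theorem 4.1, and in \cite{HI}, Lemma 8.1) genuinely delivers $(G_t)$ with the monotonicity of $m_H$, the exponential area bound $Area(\Sigma_t) = Area(\partial B_\mu)e^{t-\ldots}$-type growth, and the property that passing to the strictly minimizing hull $G^+_{t_1}$ does not increase area. Given that the excerpt already grants Lemmas \ref{imcfvol} and \ref{functiontofv} and points to \cite{Shi} for their proofs, I would simply cite those and present the two-line chain-rule argument above, emphasizing only the substitution of the Hawking mass formula as the new content.
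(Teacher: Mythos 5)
Your proposal is correct and follows essentially the same route as the paper, which simply invokes the arguments of Theorems 1.2--1.3 in \cite{Shi}: combine the exact exponential area evolution $\frac{dB}{dt}=B$ of the weak flow (preserved across jumps to the strictly minimizing hull), Lemma \ref{functiontofv}, and the algebraic identity $\int_{\Sigma_t}H^2=16\pi+4B-\frac{(16\pi)^{3/2}}{B^{1/2}}m$ obtained by solving \eqref{mass}. Your cautionary remarks about the Lipschitz regularity and the a.e.\ interpretation of the derivative are appropriate and consistent with what is actually proved in \cite{Shi}.
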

As in \cite{Shi}, we consider the region $M_{e}\triangleq M \setminus \mathbb{D}$ of $M$. By the definition of AH manifold, without loss of generality, we assume that $M_e$ is diffeomorphic to $\mathbb{R}^3 \setminus B_1(o)$. Let $\Omega_{e}=\Omega \cap M_{e}$ and
\begin{eqnarray*}
A_e(v)&=&\inf\{\mathcal{H}^2(\partial^{\ast}\Omega_e):\Omega\subset M ~is~ a~ Borel~ set~ with~\\
 &&finite ~perimeter~ and~ \mathcal{H}^3(\Omega_e)=v\}.
\end{eqnarray*}
Clearly, we have $A(v)\leq A_e(v)$. In the following, we mainly focus on $A_e(v)$.

\begin{lemm}\label{nondecreasingav}
Let $(M^3,g)$ be an AH manifold, then $A_e(v)$ is nondecreasing.
\end{lemm}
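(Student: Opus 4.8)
The plan is to show that if $0 < v_1 < v_2$, then $A_e(v_1) \le A_e(v_2)$, by producing, from an (almost) optimal competitor for $A_e(v_2)$, a competitor for $A_e(v_1)$ whose perimeter in $M_e$ is no larger. The natural device is the inverse mean curvature flow / level-set construction recalled just above: since $M_e$ is diffeomorphic to $\mathbb{R}^3 \setminus B_1(o)$ and the distance function $\rho$ to the essential set $\mathbb{D}$ has no critical points, the region $\{\rho \le s\}$ foliates $M_e$ by the smooth convex leaves $\partial \mathbb{D}_\rho$. The key monotonicity input is the remark made after Definition \ref{essentialset}: $\mathrm{Area}(\partial \mathbb{D}_\rho)$ is increasing in $\rho$ and each leaf $\partial \mathbb{D}_\rho$ has positive mean curvature with respect to the outward normal.

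First I would fix $\varepsilon > 0$ and choose $\Omega_e \subset M_e$ of finite perimeter with $\mathcal{H}^3(\Omega_e) = v_2$ and $\mathcal{H}^2(\partial^\ast \Omega_e) \le A_e(v_2) + \varepsilon$. I then consider the one-parameter family $\Omega_e \cap \mathbb{D}_s$ (equivalently, intersect $\Omega_e$ with the sublevel sets of $\rho$) and its complementary truncation $\Omega_e \setminus \mathbb{D}_s$. By the coarea formula the volume $\mathcal{H}^3(\Omega_e \cap \mathbb{D}_s)$ is continuous and nondecreasing in $s$, running from $0$ to $v_2$, so there is $s_0$ with $\mathcal{H}^3(\Omega_e \cap \mathbb{D}_{s_0}) = v_1$. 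This truncated set $\Omega_e' := \Omega_e \cap \mathbb{D}_{s_0}$ is then a valid competitor for $A_e(v_1)$, and its reduced boundary decomposes (up to measure zero) as $(\partial^\ast \Omega_e \cap \mathbb{D}_{s_0}) \cup (\Omega_e \cap \partial \mathbb{D}_{s_0})$. The first piece has measure at most $\mathcal{H}^2(\partial^\ast \Omega_e)$, so it suffices to control the slice term $\mathcal{H}^2(\Omega_e \cap \partial \mathbb{D}_{s_0})$.

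The main obstacle is exactly this slice-area estimate: a priori $\mathcal{H}^2(\Omega_e \cap \partial \mathbb{D}_{s_0})$ could exceed $\mathcal{H}^2(\partial^\ast \Omega_e)$ if the competitor is wild. The way I would handle it is to not truncate with a fixed leaf but to run the weak inverse mean curvature flow $(G_t)$ (with the foliation $\partial \mathbb{D}_\rho$, or its minimizing hulls, as the relevant replacement) and use the convexity/positive mean curvature of the leaves together with the minimizing-hull property: replacing $\Omega_e$ by $\Omega_e \cap G_{t}$ cannot increase the enclosed perimeter because $G_t$ is (a strictly minimizing hull, hence) area-minimizing from the inside in $M_e$, and the outer boundary contribution $\mathcal{H}^2(\partial G_t)$ is controlled monotonically via Lemma \ref{functiontofv} and Proposition \ref{derivativebtov}. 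Concretely one shows $A_e(v) \le \mathrm{Area}(\partial G_{t(v)}) = B(v)$ and that $B$ is nondecreasing (its derivative bound in \eqref{BV} is manifestly nonnegative since $H > 0$ on the leaves), which yields $A_e(v_1) \le B(v_1) \le B(v_2)$; combining with the standard fact that $A_e$ is realized by such hulls gives $A_e(v_1) \le A_e(v_2)$. Since $v_1 < v_2$ were arbitrary, $A_e$ is nondecreasing, completing the proof.
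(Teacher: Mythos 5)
Your overall strategy (truncate a near-optimal competitor for $A_e(v_2)$ down to volume $v_1$ without increasing perimeter) is a legitimate and genuinely different route from the paper's, but as written it has a fatal gap in its final step. From $A_e(v)\le B(v)$ and monotonicity of $B$ you only get $A_e(v_1)\le B(v_1)\le B(v_2)$, and to conclude you invoke ``the standard fact that $A_e$ is realized by such hulls'', i.e.\ $B(v_2)\le A_e(v_2)$. This is not a standard fact and there is no reason for it to hold: the inequality between $A_e$ and $B$ goes the other way ($A_e(v_2)\le B(v_2)$ is exactly what the surrounding text proves), and the IMCF leaves $\partial G_t$ are not isoperimetric in $M_e$. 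If that identification were available, the lemma would be immediate and none of the truncation machinery would be needed. Relatedly, your claim that $B$ is nondecreasing because the bound in \eqref{BV} is nonnegative is a non sequitur: \eqref{BV} is an \emph{upper} bound on $dB/dv$, and a nonnegative upper bound does not force monotonicity (the correct reason is the exponential growth of area along the weak flow together with monotonicity of $t(v)$). The truncation step itself is also incomplete on two points: (i) the property of $G_t$ needed to get $\mathcal H^2(\partial^*(\Omega_e\cap G_t))\le\mathcal H^2(\partial^*\Omega_e)$ is that $G_t$ is \emph{outward} minimizing, used through the submodularity inequality $P(\Omega_e\cap G_t)+P(\Omega_e\cup G_t)\le P(\Omega_e)+P(G_t)$; positive mean curvature of the leaves $\partial\mathbb D_\rho$ does not give this, and ``area-minimizing from the inside'' is the wrong direction of minimality; (ii) $t\mapsto\mathcal H^3(\Omega_e\cap G_t)$ need not be continuous across jump times of the weak flow, so you cannot in general hit the volume $v_1$ exactly, and the interpolating sets between $G_{t_1}$ and $G_{t_1}^+$ are not outward minimizing, so the perimeter control is lost precisely where you need it.

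For contrast, the paper argues by contradiction: assuming $A_e(v_1)>A_e(v_2)$ for some $v_1<v_2$, it minimizes perimeter over all exterior regions of volume \emph{at least} $v_1$, allowing part of the volume to escape to infinity in the form of a hyperbolic ball $B(S)$. If the minimizer has volume exactly $v_1$, one contradicts $A_e(v_1)>A_e(v_2)$ directly; if the volume constraint is inactive, the free part of the boundary would be a stable minimal surface in the exterior, which the mean-convex foliation $\partial\mathbb D_\rho$ forbids, so all the volume sits in $B(S)$, which can then be shrunk to volume $v_1$ and reinserted near infinity to contradict the definition of $A_e(v_1)$. You would need either to repair the outward-minimizing and jump-time issues above or to adopt an argument of this type.
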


\begin{rema}Similar result was proved in \cite{OC}, see Lemma 3.3 therein, but in current case the mean curvature of the $\p M_e$ may not  equal to $2$, so we have to handle this carefully.
\end{rema}\par

\begin{proof}[\textbf{Proof of Lemma \ref{nondecreasingav}}]
For $\mathbb{D}$ large enough. By the definition of AH manifold and direct computation,  we have
\begin{equation}\label{finitevoldifference}
\int_{M_e}|\sqrt{g}-\sqrt{g_\mathbb{H}}|dv_g \leq 1.
\end{equation}
Here, $dv_g$ denotes the volume element with respect to metric $g$. We show the lemma by contradiction.

We assume that $A_e(v)$ is not nondecreasing, it means that there exists $$v_1 < v_2,$$ but$$A_e(v_1)> A_e(v_2).$$
Furthermore, we define
$$
\mu =\ {\mbox{inf}}\ \{\mathcal{H}^2 (\p^* \Omega_e): \mathcal{H}_g ^3 (\Omega_e)\geq v_1 \}.
$$

It is obvious that $\mu \leq  A(v_2)$. We claim that $\mu$ can be achieved by certain $\Omega'_e \subset M_e$ and a hyperbolic ball in $B(S)\subset\mathbb{H}^3$ (see Proposition 3.2 in \cite{OC}). i.e. there exists a $v\geq v_1$,  $\Omega'_e \subset M_e$ and $S\geq 0$ such that
\begin{itemize}
  \item $ \mathcal{H}_g ^3 (\Omega'_e)+\mathcal{H}_\mathbb{H} ^3 (B(S))=v$,
   \item $\mathcal{H}_g ^2 (\p^* \Omega' _e )+S=\mu.$
\end{itemize}
  Here, $B(S)$ is the geodesic ball in $\mathbb{H}^3$ with its area equals to $S$, $\mathcal{H}_\mathbb{H} ^3(\ \cdot\ )$ is the Lebesgue measure in $\mathbb{H}^3$. Then we choose $\{\Omega_e ^i\}$ as corresponding minimizing sequence. Note that $\mathcal{H}_g ^3 (\Omega_e ^i)$ is uniform bound. Otherwise, Through direct computation and (\ref{finitevoldifference}), we have
$$
\mathcal{H}_\mathbb{H} ^3(\Omega_e ^i )\geq \mathcal{H}_g ^3 (\Omega_e ^i)-1.
$$
Hence. we obtain that $\mathcal{H}_\mathbb{H} ^3 (\Omega_e ^i)$ would be unbounded and its area with respect to $g_\mathbb{H}$ is also unbounded, and then implies $\mathcal{H}^2 (\p \Omega_e ^i)$ would be unbounded which reach the contradiction. Once we know $\mathcal{H}_g ^3 (\Omega_e ^i)$ is uniformly bounded, then by the arguments in \cite{Br} we see claim is true.

Next, we claim that  $v>v_1$.  Indeed if our claim is false, then we have $v=v_1$. We firstly prove that $S=0$. If not, we have $S>0$. Then we can put $B(S)$ in $M_e$  which is far away from $\mathbb{D}$ where $g$ is very close to $g_\mathbb{H}$. Hence, $\mathcal{H}_g ^3 (B(S))$ and $\mathcal{H}^2 (\p B(S) )$ can be very close to $\mathcal{H}_\mathbb{H}^3 (B(S))$  and $S$ respectively, then for  $\epsilon>0$ small enough and by a small perturbation on $\Omega' _e \cup B(S)$ in $M_e$ if necessary, we can construct a domain $\bar \Omega \subset M_e$ with $$\mathcal{H}_g ^3 (\bar \Omega)=v=v_1$$ and
$$\mathcal{H}^2 (\p^* \bar \Omega )\leq \mu+\epsilon \leq A_e(v_2)+\epsilon < A_e(v_1).$$
It contradicts with the definition of isoperimetric profile $A_e(v)$. Hence, we have $$S=0.$$

However, as $S=0$, we have $$\mathcal{H}_g^2(\p^* \Omega^\prime_{e}) =\mu \leq A_e(v_2)<A_e(v_1).$$
It still contradicts with the definition of isoperimetric profile $A_e(v)$. Hence, we finally have
 $$v >v_1.$$

 Then $\p\Omega'_e \setminus \mathbb{D}$ is a stable minimal surface if it's nonempety. However, this is impossible as the level set of distant function to $\mathbb{D}$ has positive mean curvature. Thus, we conclude that $\mathcal{H}_g^3(\Omega_e^\prime)=0$ and $\mathcal{H}_g^2(\p^* \Omega_e^\prime)\geq 0$. Then we have

 \begin{itemize}
  \item $\mathcal{H}_\mathbb{H} ^3 (B(S))=v>v_1,$
   \item $\mathcal{H}_\mathbb{H} ^2 (\p^* B(S))=S\leq \mu \leq A_e(v_2) < A_e(v_1).$
\end{itemize}
Henceforth, there exists $S^\prime <S $ such that
$$\mathcal{H}_\mathbb{H} ^3 (B(S^\prime))=v_1,\ \  \mathcal{H}_\mathbb{H} ^2 (\p^* B(S^\prime))=S^\prime<S\leq \mu \leq A_e(v_2) < A_e(v_1).$$
Then, By making a small perturbation, we can construct a region $D \subset M_e$ far away from $\mathbb{D}$ (this trick being used above)such that
$$\mathcal{H}_g ^3 (D)=v_1,\ \  \mathcal{H}_g ^2 (\p^* D) < A_e(v_1).$$
It contradicts with the definition of isoperimetric profile of volume $v_1$. Thus, we finish the proof of  Lemma \ref{nondecreasingav}.

\end{proof}
\vskip 5mm
Now, we are in the position to prove Proposition \ref{areacomparison}:

\begin{proof}[\textbf{Proof of Proposition \ref{areacomparison}}] We firstly set
$$f(v)=B^\frac32 (v),\ \ f_{\mathbb{H}}(v)=A_{\mathbb{H}}^{\frac32} (v)$$
where $A_{\mathbb{H}}( \cdot )$ denotes the isoperimetric profile of $\mathbb{H}^3$. We want to prove that
$$\forall \epsilon>0,\  \forall v\geq 0,\ \
f(v)\leq (1+\epsilon)f_{\mathbb{H}}(v).$$

 we prove it by contradiction, we suppose that the results is false, then there exists $v_1>0$ such that $$f(v_1)>(1+\epsilon)f_{\mathbb{H}}(v_1).$$

 On the other hand, we know that there exists some $\delta>0$ (may depend on $\epsilon$) such that $f(v)\leq (1+\epsilon)f_{\mathbb{H}}(v)$, $ \forall v\leq \delta$. Then, we set
$$
v_0=\inf\{v:f(v)>(1+\epsilon)f_{\mathbb{H}}(v) \},
$$
Then, we have $v_0\geq\delta >0$ and $f(v_0)=(1+\epsilon)f_{\mathbb{H}}(v_0)$. Setting $\omega(v)=f(v)-(1+\epsilon)f_{\mathbb{H}}(v)$,
we have
\begin{equation}\label{omega}
\begin{split}
\frac{d}{dv}\omega(v)\leq 3(4\pi +f^\frac23(v))^\frac12 -3(4\pi+(\frac1{1+\epsilon})^\frac23 ((1+\epsilon)f_{\mathbb{H}}(v))^\frac23)^\frac12 (1+\epsilon)
\end{split}
\end{equation}
Here, we have used Proposition \ref{derivativebtov}, $m(v)\geq 0$ and
$
\frac{d}{dv}f_{\mathbb{H}}(v)= 3\left(4\pi +f_{\mathbb{H}}^\frac23(v)\right)^\frac12,
$

In the formula  (\ref{omega}). Noticing that $\omega$ is Lipschtiz, we can choose a sequence of $\{\alpha_i >0\}$ and $\lim\limits_{i\rightarrow \infty}\alpha_i=0$. Then, As $i$ becomes large enough, we have

\begin{equation}
\begin{split}
0&\leq \frac{\omega(v_0)-\omega(v_0 -\alpha_i)}{\alpha_i}\\
&\leq \frac3{\alpha_i}\int^{v_0}_{v_0 -\alpha_i}\left((4\pi +f^\frac23(v))^\frac12 -[4\pi+(\frac1{1+\epsilon})^\frac23 ((1+\epsilon)f_{\mathbb{H}}(v))^\frac23]^\frac12 (1+\epsilon)\right)dv\\
&<0.
\end{split}
\end{equation}
we reach a contradiction. Hence, we have that $f(v)\leq (1+\epsilon)f_{\mathbb{H}}(v)$ as $\epsilon$ is arbitrary. It means that  $f(v)\leq f_{\mathbb{H}}(v)$ or $B(v)\leq A_{\mathbb{H}}(v)$ for $\forall v\leq \delta$.

Now we begin to prove  $A(v)\leq A_{\mathbb{H}}(v)$ as follows. For $\forall v>0$, we  choose sufficiently large $\rho_0=\rho_0(v)>0$ and for any $x\in M_e$, we consider the inverse mean curvature flow with initial data $\{x\}$. By choosing $\rho_0$ sufficiently large if necessary, we  assume $G_t\subset M_e$ with $\mathcal{H}^3(\Omega_t)>v$, here $G_t$ is the compact region bounded by $\Sigma_t$ and $\Sigma_t$ is the weak solution of the inverse mean curvature flow with $\{x\}$ as the initial condition. Due to discussion above, It's obvious that  $B(v)\leq A_{\mathbb{H}}(v)$. If $v$ is not a jump volume, then there exists $t$ such that $G_t$ with $Vol(G_t)=v$. Hence, we have
$$
A(v)\leq A_{e}(v)\leq Area(\Sigma_t)=B(v)\leq A_{\mathbb{H}}(v).
$$
Otherwise, $v$ is a jump volume. At this time, there exists $G_\tau$ such that $$v_1 =Vol (G_\tau)<v \leq Vol (G^+ _\tau)=v_2.$$ Hence, $t(v)=\tau$ and $B(v)=B(v_1)$,

$$
A(v)\leq A_{e}(v)\leq A_{e}(v_2)\leq Area (\Sigma^+ _\tau)=Area(\Sigma_\tau)=B(v_1)=B(v).
$$
Here, we have used Lemma \ref{nondecreasingav} in the second inequality above and $\Sigma^+ _\tau =\p G^+ _\tau$.
\\

Next, we claim that {\it if for some $v_0>0$, $A(v_0)= A_{\mathbb{H}}(v_0)$, then $(M^3,g)$ is isometric to $\mathbb{H}^3$}.
 Suppose not, then there exists $x\in M$, $Ric(x)\neq-2g$. Considering the weak solution of inverse mean curvature flow with initial condition $\{x\}$,  we have $m(v)>0$ for $\forall v>0$. Hence, $A(v_0)\leq A_e(v_0)\leq B(v_0)< A_{\mathbb{H}}(v_0)$ which is a contradiction. Therefore, we have proved that $(M^3,g)$ is isometric to $\mathbb{H}^3.$
\end{proof}

As in \cite{OC}, the following lemma is used to prevent an isoperimetric region with large volume from drifting off to the infinity of $(M,g)$.

\begin{lemm}\label{largev} Let $(M^3,g)$ be an AH with $R(g)\geq -6$, we have
\begin{eqnarray*}
A(v)\leq  A_{\mathbb{H}}(v)-2V(M,g)+2\sqrt{2}\pi^{\frac{3}{2}}\left(\int_{\mathbb{S}^2}(tr_{\sigma}h)\right)v^{-\frac{1}{2}}+o(v^{-\frac{1}{2}}), \  v\rightarrow \infty.
\end{eqnarray*}
Here, $V(M,g)$ is the renormalized volume.
\end{lemm}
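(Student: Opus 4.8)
The plan is to bound the isoperimetric profile $A(v)$ from above by a single explicit competitor domain of volume $v$ and then to compare its area with the hyperbolic profile $A_{\mathbb{H}}$. Fix $r_0$ large enough that the expansion in Definition \ref{definitionofahmfd} is valid on $\{r\ge r_0\}$ (for simplicity assume $M$ has one end), and for $\rho>r_0$ let $\Omega_\rho\subset M$ consist of the compact part of $M$ together with $\{r<\rho\}$, so that $S_\rho:=\partial\Omega_\rho=\{r=\rho\}$. Since $\{\Omega_\rho\}$ exhausts $M$, the definition of the renormalized volume applies to it. First I would extract $Area(S_\rho)$ and $\mathcal{H}^3_g(\Omega_\rho)$ from the metric expansion: writing $g|_{S_\rho}=(\sinh^2\rho)\,\sigma+\tfrac{1}{3\sinh\rho}h+O(e^{-3\rho})$ and expanding the area density $dA_g=\big(\sinh^2\rho+\tfrac{1}{6\sinh\rho}\,\tr_{\sigma}h+O(e^{-3\rho})\big)\,d\mu_\sigma$ gives
$$Area(S_\rho)=4\pi\sinh^2\rho+\frac{1}{6\sinh\rho}\int_{\mathbb{S}^2}\tr_{\sigma}h+O(e^{-3\rho}).$$
Integrating the identity $\tfrac{d}{d\rho}\mathcal{H}^3_g(\Omega_\rho)=Area(S_\rho)$ over $[r_0,\rho]$, the $\sinh^2$ term integrates to $\pi(\sinh2\rho-2\rho)$ plus a constant, and $\pi(\sinh2\rho-2\rho)$ is exactly $\mathcal{H}^3_{\mathbb{H}}(\Omega_\rho)$, the volume of the geodesic ball of radius $\rho$ in $\mathbb{H}^3$; the $\tfrac{1}{\sinh r}$ term integrates to $-\tfrac13 e^{-\rho}\int_{\mathbb{S}^2}\tr_{\sigma}h$ up to a convergent constant; and $\int O(e^{-3r})$ is a convergent constant plus $O(e^{-3\rho})$. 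By the very definition of $V(M,g)$ the $\rho$-independent constants sum to $V(M,g)$, so, writing $v=v(\rho):=\mathcal{H}^3_g(\Omega_\rho)$,
$$v=\mathcal{H}^3_{\mathbb{H}}(\Omega_\rho)+V(M,g)-\tfrac13 e^{-\rho}\int_{\mathbb{S}^2}\tr_{\sigma}h+O(e^{-2\rho}).$$

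Since $\rho\mapsto v(\rho)$ is continuous and strictly increasing for $\rho$ large, its range contains all large $v$, so it is enough to prove the estimate along this family. Next I would use the explicit hyperbolic profile in the parametrization $v=\pi(\sinh2t-2t)$, $A_{\mathbb{H}}(v)=4\pi\sinh^2 t$, for which $A_{\mathbb{H}}'(v)=2\coth t=2+O(e^{-2t})$ and $A_{\mathbb{H}}''(v)=O(e^{-4t})$, noting that the value of $v$ with radius exactly $\rho$ is $\pi(\sinh2\rho-2\rho)=\mathcal{H}^3_{\mathbb{H}}(\Omega_\rho)$. Taylor expanding $A_{\mathbb{H}}$ about $\mathcal{H}^3_{\mathbb{H}}(\Omega_\rho)$ and inserting the formula for $v$ above yields
$$A_{\mathbb{H}}(v)=4\pi\sinh^2\rho+2V(M,g)-\tfrac23 e^{-\rho}\int_{\mathbb{S}^2}\tr_{\sigma}h+O(e^{-2\rho}).$$
Combining this with $A(v)\le Area(S_\rho)$ and $\tfrac{1}{6\sinh\rho}=\tfrac13 e^{-\rho}+O(e^{-3\rho})$ gives
$$A(v)-A_{\mathbb{H}}(v)\le -2V(M,g)+e^{-\rho}\int_{\mathbb{S}^2}\tr_{\sigma}h+O(e^{-2\rho}).$$
Finally, from $v=\tfrac{\pi}{2}e^{2\rho}+O(\rho)$ one has $e^{-\rho}$ equal to a fixed multiple of $v^{-1/2}$ up to lower order, and $O(e^{-2\rho})=O(v^{-1})=o(v^{-1/2})$; substituting $e^{-\rho}$ in terms of $v$ and simplifying the numerical factor produces the claimed expansion.

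The hard part will be the asymptotic bookkeeping of the terms of order $e^{-\rho}\sim v^{-1/2}$. There are three independent sources — the $\tfrac{1}{6\sinh\rho}$ correction in $Area(S_\rho)$, the $-\tfrac13 e^{-\rho}\int_{\mathbb{S}^2}\tr_{\sigma}h$ defect in $\mathcal{H}^3_g(\Omega_\rho)-\mathcal{H}^3_{\mathbb{H}}(\Omega_\rho)$ beyond the constant $V(M,g)$, and the $O(e^{-2t})$ part of the slope $A_{\mathbb{H}}'$ that feeds the Taylor expansion — and one must check that they combine into the single stated coefficient, while verifying that every discarded term is genuinely $o(v^{-1/2})$ (they are $O(v^{-1})$ or $O(v^{-1}\log v)$). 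A secondary point is that the competitor is admissible in the sense of Definition \ref{isoprofile} (it is, being a smooth compact domain) and that $\mathcal{H}^3_{\mathbb{H}}(\Omega_\rho)$, as it enters the definition of $V(M,g)$, really equals the hyperbolic volume $\pi(\sinh2\rho-2\rho)$ of the geodesic ball of radius $\rho$ under the diffeomorphism identifying the end; in the presence of several ends one runs the same construction on each end simultaneously.
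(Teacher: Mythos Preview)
Your proposal is correct and follows essentially the same strategy as the paper: use the coordinate sublevel sets $\Gamma_\rho=\{r\le\rho\}$ as competitors, expand their $g$-area and $g$-volume from the metric asymptotics, and compare with the explicit hyperbolic profile. The only cosmetic difference is that the paper introduces an auxiliary hyperbolic radius $\rho'_v$ (defined by $\mathcal{H}^3_{\mathbb H}(B_{\rho'_v})=v$) and shows $\rho'_v-\rho_v=O(e^{-2\rho_v})$, whereas you Taylor-expand $A_{\mathbb H}$ directly about $\mathcal{H}^3_{\mathbb H}(\Omega_\rho)$ using $A_{\mathbb H}'=2\coth t$ and $A_{\mathbb H}''=O(e^{-4t})$; these are equivalent bookkeepings of the same expansion.
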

\begin{proof}

we firstly choose sufficiently large $\rho_0$. Let $v_1=\mathcal{H}^3_g(\Gamma_{\rho_0})$ and $v_2=\mathcal{H}_{\mathbb{H}}^3(\Gamma_{\rho_0})$, here $\Gamma_{\rho_0}=\{\rho\leq\rho_0\}$.
According to the definition of asymptotically hyperbolic manifold, we know on $M\backslash K$, here $K\subset M$ is some compact set
$$\|g-g_{\mathbb{H}}\|_{g_{\mathbb{H}}}=O(e^{-3\rho}).$$
It is not hard to check that
\begin{eqnarray*}
\mathcal{H}^2_{\mathbb{H}}(\partial \Gamma_{\rho} ) &=&4\pi \sinh^{2}\rho,\\
\mathcal{H}^2_g(\partial \Gamma_{\rho} ) &=&4\pi \sinh^{2}\rho(1+\frac{1}{6\sinh^3\rho}\left(\int_{\mathbb{S}^2}(tr_{\sigma}h)\right)+o(e^{-\rho}).
\end{eqnarray*}
By the definition of renormalized volume, we have
\begin{eqnarray*}
\lim_{\rho\rightarrow\infty}(\mathcal{H}^3_g(\Gamma_{\rho})-\mathcal{H}_{\mathbb{H}}^3(\Gamma_{\rho}))=V(M,g),
\end{eqnarray*}
i.e.
\begin{eqnarray*}
\int_{\rho_0}^{\infty}[\mathcal{H}^2_g(\partial \Gamma_{\rho} )-\mathcal{H}^2_{\mathbb{H}}(\partial \Gamma_{\rho} )]d\rho+v_1-v_2=V(M,g).
\end{eqnarray*}
Thus
\begin{eqnarray}\label{v}
&&\mathcal{H}^3_g(\Gamma_{\rho})-\mathcal{H}_{\mathbb{H}}^3(\Gamma_{\rho})-V(M,g)\nonumber\\
&=&-\int_{\rho}^{\infty}[\mathcal{H}^2_g(\partial \Gamma_{\rho} )-\mathcal{H}^2_{\mathbb{H}}(\partial \Gamma_{\rho} )]d\rho\nonumber\\
&=&-\int_{\rho}^{\infty}\frac{2\pi}{3\sinh\rho}d\rho \cdot(\int_{\mathbb{S}^2}(tr_{\sigma}h))+o(e^{-\rho})\nonumber\\
&=&-\frac{2\pi}{3\sinh\rho}\cdot(\int_{\mathbb{S}^2}(tr_{\sigma}h))+o(e^{-\rho}).
\end{eqnarray}
For simplicity, we also denote $\Gamma_{v}=\{\rho\leq \rho_v\}$ where $\rho_v$ is chosen so that $\mathcal{H}^3_g(\Gamma_v)=v$.
It is well-known that the coordinate sphere in hyperbolic space is the unique isoperimetric surface. Choose $\rho'_v$ such that
\begin{eqnarray*}
A_{{\mathbb{H}}}(v)=4\pi \sinh^{2}\rho'_v,~~~~~~~~~~and~~~~~~~~~~v=\int_{0}^{\rho'_v}4\pi \sinh^{2}\rho d\rho.
\end{eqnarray*}
On the other hand, by \eqref{v}, we have
\begin{eqnarray*}
(\mathcal{H}^3_g(\Gamma_v)-\mathcal{H}_{\mathbb{H}}^3(\Gamma_v))-V(M,g)=-\frac{2\pi}{3\sinh\rho_v}(\int_{\mathbb{S}^2}(tr_{\sigma}h))+o(e^{-\rho_v}),
\end{eqnarray*}
i.e.
\begin{eqnarray*}
\left(v-\int_{0}^{\rho_v}4\pi \sinh^{2}\rho d\rho\right)-V(M,g)=-\frac{2\pi}{3\sinh\rho_v}(\int_{\mathbb{S}^2}(tr_{\sigma}h))+o(e^{-\rho_v}).
\end{eqnarray*}
By direct computation,
\begin{eqnarray*}
\int_{0}^{\rho'} \sinh^{2}\rho d\rho=\frac{1}{2}\sinh^{2}\rho'+\frac{1}{4}-\frac{\rho'}{2}-\frac{1}{4}e^{-2\rho'}.
\end{eqnarray*}
Thus
\begin{eqnarray*}\begin{array}{lll}
&4\pi \sinh^{2}\rho'_v-4\pi \sinh^{2}\rho_v&\\
=&2V(M,g)-\frac{4\pi}{3\sinh\rho_v}\left(\int_{\mathbb{S}^2}(tr_{\sigma}h)\right)+o(e^{-\rho'_v})+o(e^{-\rho_v})+4\pi (\rho'_v-\rho_v),
\end{array}
\end{eqnarray*}
we let $v\rightarrow\infty$, so we have
$$\rho'_v\rightarrow\infty,\ \rho_v\rightarrow\infty,\ \pi (e^{2\rho'}_v-e^{2\rho_v})-4\pi(\rho'_v-\rho_v)\rightarrow 2V(M,g).
$$
It means that
\begin{eqnarray*}
\pi e^{2\rho_v} (e^{2(\rho'_v-\rho_v)}-1)-4\pi(\rho'_v-\rho_v)\rightarrow 2V(M,g).
\end{eqnarray*}
so
$$e^{2(\rho'_v-\rho_v)}-1=O(e^{-2\rho_v}),$$
and
$$\rho'_v-\rho_v=O(e^{-2\rho_v}).$$
Thus
\begin{eqnarray*}
A_g(v)-  A_{{\mathbb{H}}}(v)&\leq&\mathcal{H}^2_g(\partial \Gamma_v )-4\pi \sinh^{2}\rho'_v\\
&\leq&4\pi \sinh^{2}\rho_v\left(1+\frac{\pi (\int_{\mathbb{S}^2}(tr_{\sigma}h))}{6\sinh^3\rho_v}\right)-4\pi \sinh^{2}\rho'_v+o(e^{-\rho_v})\\
&\leq&
-2V(M,g)+2\sqrt{2}\pi^{\frac{3}{2}}\left(\int_{\mathbb{S}^2}(tr_{\sigma}h)\right)v^{-\frac{1}{2}}+o(v^{-\frac{1}{2}}),
\end{eqnarray*}
Thus, we conclude to prove the Lemma.

\end{proof}

Now we can prove Proposition \ref{nodrift}.

\begin{proof}[\textbf{Proof of Proposition \ref{nodrift}}]
Suppose that $\{D_i\}$ is a family of connected isoperimetric regions in an AH manifold $(M^3, g)$ with $R\geq -6$ and $\mathcal{H}_g ^3 (D_i)\geq \delta_0 >0, \delta_0 \in \mathbb{R}$, we want to show that for a fixed point $o\in M $, there exists a constant $\Lambda>0$ such that $d(o, D_i)\leq \Lambda$, for all $i$. Otherwise, we can find a subsequence, which is still denoted by $\{D_i\}$ with $d(o, D_i)\rightarrow \infty$. Noticing  that $(M^3,g)$ is not isometric to $\mathbb{H}^3$, we have $V(M,g)>0$.  We can use the exact same arguments to prove this fact. The only difference between our current case and paper \cite{BC} is that a single point rather than a horizon in $M$ is taken as the initial data for the inverse mean curvature flow which  played an important role in \cite{BC}. Indeed,  our case can be regarded as the limiting case of $m =0$ therein. We consider the following two cases.

{\bf Case 1}: $v_i =\mathcal{H}_g ^3 (D_i)\rightarrow\infty$. Then due to Lemma \ref{largev}, we have
\begin{equation}\label{positivediffofvol}
0<V(M,g)\leq A_{\mathbb{H}}(v_i)-A(v_i),\ \  i \rightarrow \infty,
\end{equation}
On the other hand, by the definition of AH metric, we have
$$
\|g-g_{\mathbb{H}}\|_g \leq C e^{-3\rho}.
$$
Hence, by the similar arguments in the proof of (\ref{finiteintegral1}),
$$
A_{\mathbb{H}}(v_i)-A(v_i)\rightarrow 0, \quad \text{} \quad i\rightarrow\infty.
$$
which contradicts with inequality(\ref{positivediffofvol}).

{\bf Case 2}: $\delta_0\leq v_i= \mathcal{H}_g ^3 (D_i)\leq C <\infty$. Again, by the similar arguments in the proof of (\ref{finiteintegral1}), we have

$$
A_{\mathbb{H}}(v_i)-A(v_i)\rightarrow 0,  \quad i\rightarrow\infty,
$$
 By taking a subsequence if necessary, we assume $v_i\rightarrow v_0>0$,  we have $A_{\mathbb{H}}(v_0)=A(v_0)$. Then, we see that $(M^3, g)=\mathbb{H}^3$  by Proposition \ref{areacomparison}. It contradicts with our assumption.
 Hence, we finish the proof of  Proposition \ref{nodrift}.

\end{proof}

As a corollary of  Proposition \ref{nodrift}, we have
\begin{coro}
Let $(M^3, g)$ be an AH manifold with $R(g)\geq -6$ and be not isometric to $\mathbb{H}^3$. If $\{D_i\}$ is a family of connected isoperimetric regions with $\delta_0 \leq \mathcal{H}_g ^3(D_i)\leq \delta_1$ where $\delta_0, \delta_1$ are fixed positive constant and connected boundary. Then, there exists a compact and smoothly embedded surface $\Sigma$ with constant mean curvature such that $\p D_i$  converges to $\Sigma$ in topology of $C^k$, for any $k\geq 1$.
\end{coro}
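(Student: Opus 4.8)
The plan is to combine Proposition \ref{nodrift} with the compactness machinery already assembled in Section 2, and then use the volume upper bound $\delta_1$ together with the lower bound $\delta_0$ to rule out any degeneration. First I would invoke Proposition \ref{nodrift}: since $\mathcal{H}_g^3(D_i)\geq\delta_0>0$ and $(M^3,g)$ is not isometric to $\mathbb{H}^3$, there is a fixed compact domain $E$ with $D_i\cap E\neq\emptyset$ for every $i$. Hence the boundaries $\Sigma_i=\partial D_i$ all pass through a fixed compact set $\mathbf{K}$ (an enlargement of $E$). Next I would bound the mean curvatures: each $\Sigma_i$ is an isoperimetric surface, so it has constant mean curvature $H_i$, and the standard comparison with geodesic spheres (as in Lemma \ref{hgeq2}, plus an upper estimate of the same type using an inscribed geodesic ball whose radius is controlled by $\delta_1$ via the monotonicity of volume) gives $0<H_i\leq\Lambda$ for a constant $\Lambda$ depending only on $\delta_0,\delta_1$ and $(M^3,g)$. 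With $0<H_i\leq\Lambda$ in hand, Proposition \ref{curvatureestimateisoperisurf} yields a uniform bound $\|A_i\|\leq C$.

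With uniformly bounded second fundamental forms and a uniform base point in $\mathbf{K}$, the compactness theorem for CMC surfaces (the argument of Theorem 18 in \cite{Ros}) gives a subsequence of $\{\Sigma_i\}$ converging in $C^k$-local topology, for every $k\geq 1$, to an embedded surface $\Sigma$ of constant mean curvature. The point that still has to be checked is that $\Sigma$ is compact and that the convergence is genuinely $C^k$ globally, not merely on compact subsets. This is where the upper bound $\delta_1$ enters decisively: the enclosed volumes satisfy $\mathcal{H}_g^3(D_i)\leq\delta_1$, and by the area growth estimate of Lemma \ref{areagrowthisosurf} together with a monotonicity/isoperimetric argument the surfaces $\Sigma_i$ cannot extend into the region $\mathbb{D}_\rho\setminus\mathbb{D}$ for $\rho$ larger than some fixed $\rho_1=\rho_1(\delta_1)$; otherwise they would enclose more than $\delta_1$ of volume (or, alternatively, have uncontrolled area, contradicting $A_g(v)\leq A_{\mathbb{H}}(v)$ evaluated at $v\leq\delta_1$). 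Thus all $\Sigma_i$ lie in the fixed compact set $\mathbb{D}_{\rho_1}$, so the limit $\Sigma$ is compact and the convergence is in $C^k(M)$.

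It remains to pin down that $\Sigma$ is a genuine smooth embedded surface and not, say, a multiplicity-two limit or a surface with self-touching. Unit-multiplicity follows from the isoperimetric (least-area-for-given-volume) property exactly as in the cited arguments: a higher-multiplicity limit would produce, after rescaling, a stable minimal cone or a contradiction with the volume constraint. Embeddedness passes to the limit by the curvature bound plus the maximum principle. Finally, since $\mathcal{H}_g^3(D_i)\to v_0\in[\delta_0,\delta_1]$ along the subsequence, the region bounded by $\Sigma$ has volume $v_0>0$, so $\Sigma$ is nonempty. The main obstacle in carrying this out is the confinement step of the second paragraph — showing a priori that the $\Sigma_i$ do not escape to infinity in the ``radial'' direction while staying anchored in $E$ — but this is precisely what the combination of Lemma \ref{areagrowthisosurf}, Proposition \ref{areacomparison}, and the volume bound $\delta_1$ is designed to deliver; once the surfaces are trapped in a fixed compact set, the rest is the by-now-standard compactness argument for CMC isoperimetric surfaces.
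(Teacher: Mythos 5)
The paper states this corollary with no proof at all (it appears immediately after Proposition \ref{nodrift} with only the phrase ``as a corollary''), so your write-up is in effect supplying the argument the authors intend: Proposition \ref{nodrift} to anchor the regions, a uniform mean-curvature bound, Proposition \ref{curvatureestimateisoperisurf} for the second fundamental form, and the compactness argument of Theorem 18 in \cite{Ros} for multiplicity-one $C^k$ convergence. That outline is correct. Two steps need repair, though. First, your justification of the upper bound $H_i\leq\Lambda$ is backwards: an inscribed ball of radius ``controlled by $\delta_1$'' gives nothing, since small volume corresponds to \emph{large} mean curvature ($H=2\coth r$ for hyperbolic balls). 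The uniform upper bound must come from the volume \emph{lower} bound $\delta_0$ — equivalently, from the local Lipschitz estimate for the isoperimetric profile on the compact interval $[\delta_0,\delta_1]$, obtained by comparing $D_i$ with the competitor got by deleting a small amount of volume near $\Sigma_i$ and restoring it with a disjoint geodesic ball far away; the first-variation identity then yields $H_i\leq C(\delta_0,\delta_1,g)$. Second, your confinement step as stated does not work: a connected surface can run arbitrarily far into the end while enclosing volume well below $\delta_1$ (a long thin tentacle), so the volume bound alone does not trap $\Sigma_i$ in a fixed $\mathbb{D}_{\rho_1}$. The correct mechanism is the uniform \emph{area} bound $\mathrm{Area}(\Sigma_i)=A_g(v_i)\leq A_{\mathbb{H}}(\delta_1)$ from Proposition \ref{areacomparison}, combined with the curvature bound $\|A\|\leq C$ you have already established and connectedness: bounded second fundamental form gives a uniform lower bound on the area of an intrinsic unit ball of $\Sigma_i$, hence bounded total area forces bounded intrinsic diameter, and since each $\Sigma_i$ meets the fixed compact set coming from Proposition \ref{nodrift}, all $\Sigma_i$ lie in a fixed compact subset of $M$. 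With these two corrections the remainder (multiplicity one from the least-area-for-given-volume property, embeddedness in the limit, nontriviality from $v_i\to v_0\geq\delta_0$) goes through as you describe, for a subsequence, which is what the statement must mean.
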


\section{exhaustion of isoperimetric regions}
In the section, we are aimed at exploring some properties of isoperimetic regions in $(M^3,g)$. we always assume that $D_i$ are connected isoperimetric region with $\mathcal{H}^3_g(D_i)\geq \delta_0>0$ and $\Sigma_i=\partial D_i$ are topologically spherical isoperimetric surface in $M$.\par
Obviously, there are three cases for a family of isoperimetric regions in $(M,g)$ i.e.
\begin{enumerate}
  \item $\{D_i\}$ drift off to the infinity of $(M,g)$;
  \item $\{D_i\}$ are an exhaustion of $(M,g)$;
  \item $\{D_i\}$ always pass through some fixed compact domain.
\end{enumerate}

In the Proposition \ref{nodrift}, we proved that the case (1) cannot occur if  $M$ is not isometric to $\mathbb{H}^3$. Hence, we just deal with the case (2) and the case (3) in our rest part of this section.

\begin{theo}\label{rigidity}
Let $\mathbb{S}$ be the limiting surface of a family of isopermetric surfaces $\{\Sigma_i\}$ in an AH manifold $(M^3,g)$ with $R(g)\geq -6$ and $h=m\sigma$ in Definition \ref{definitionofahmfd} , if $\mathbb{S}$ is an noncompactly, completely connected surface with $\int_{\mathbb{S}}K\leq0$, then $(M^3,g)$ is isometric to $\mathbb{H}^3$.
\end{theo}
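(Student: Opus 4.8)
The plan is to use the strong stability of $\mathbb{S}$ (Corollary \ref{strongstability}) to force a rigidity \emph{along} $\mathbb{S}$, and then to transfer this rigidity to the whole of $(M^3,g)$ through its asymptotics together with the positive mass theorem of \cite{WXD}. Recall from Lemma \ref{finittotalcurvature} and Lemma \ref{squareintergableofA0} that $\mathbb{S}$ is a properly embedded, complete, noncompact surface with constant mean curvature $H=2$ and $\int_{\mathbb{S}}\|\AA\|^2\,d\sigma<\infty$, $\int_{\mathbb{S}}|K|\,d\sigma<\infty$.

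First I would insert the constant function $\phi\equiv 1$ into the strong stability inequality, which is admissible since $\phi-1=0\in C^\infty_0(\mathbb{S})$, to obtain $\int_{\mathbb{S}}(Ric(v)+\|A\|^2)\,d\sigma\leq 0$. Combining the Gauss equation with $H=2$ gives the pointwise identity $Ric(v)+\|A\|^2=\tfrac12 R+3+\tfrac12\|\AA\|^2-K$, hence
$$0\ \geq\ \int_{\mathbb{S}}\Big(\tfrac12 R+3\Big)\,d\sigma\ +\ \tfrac12\int_{\mathbb{S}}\|\AA\|^2\,d\sigma\ -\ \int_{\mathbb{S}}K\,d\sigma .$$
Since $R\geq-6$ the first integrand is $\geq 0$, and by hypothesis $-\int_{\mathbb{S}}K\,d\sigma\geq 0$; as the right-hand side is a sum of nonnegative quantities that is $\leq 0$, each of them vanishes. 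Hence $R\equiv-6$ on $\mathbb{S}$, $\AA\equiv 0$ on $\mathbb{S}$ (so $\mathbb{S}$ is totally umbilic with $A=g_{\mathbb{S}}$), and $\int_{\mathbb{S}}K\,d\sigma=0$. Now that $R=-6$ and $\AA=0$ on $\mathbb{S}$ the identity reduces to $Ric(v)+\|A\|^2=-K$, and I would then test strong stability with $\phi=1+\epsilon\psi$, $\psi\in C^\infty_0(\mathbb{S})$: expanding $\int_{\mathbb{S}}(|\nabla\phi|^2+K\phi^2)\,d\sigma\geq 0$ and using $\int_{\mathbb{S}}K\,d\sigma=0$, the term of first order in $\epsilon$ forces $\int_{\mathbb{S}}K\psi\,d\sigma=0$ for every $\psi$, whence $K\equiv 0$. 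Thus $\mathbb{S}$ is intrinsically flat and totally umbilic with second fundamental form $A=g_{\mathbb{S}}$.

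Next I would study the behaviour of $\mathbb{S}$ at infinity. A complete flat surface that is umbilic with $H=2$ in an asymptotically hyperbolic manifold must be asymptotic to a horosphere of $\mathbb{H}^3$. Using this, together with $\int_{\mathbb{S}}\big(1-\langle v,\tfrac{\p}{\p\rho}\rangle\big)^2\,d\sigma\leq C$ from Lemma \ref{finiteintegoftangentialprho}, the curvature estimate of Proposition \ref{curvatureestimateisoperisurf}, and the area growth $Area(\mathbb{D}_\rho\cap\mathbb{S})\leq Ce^{2\rho}$ from Lemma \ref{areagrowthisosurf}, I would integrate the formula (\ref{laplacerhoonsigma}) for $\Delta_{\mathbb{S}}\rho$ (with $H=2$) over $\mathbb{S}\cap\mathbb{D}_R$ and let $R\to\infty$. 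The coefficient of the $O(e^{-3\rho})$ term there records the mass aspect of $(M^3,g)$, which under $h=m\sigma$ is a fixed multiple of $m$; matching the divergent interior terms against the boundary flux $\int_{\mathbb{S}\cap\p\mathbb{D}_R}\|(\tfrac{\p}{\p\rho})^T\|$ then yields that the total mass of $(M^3,g)$ is nonpositive (in fact zero). I expect this asymptotic bookkeeping — pinning $\mathbb{S}$ to a horosphere with enough precision to read off the sign of $m$, while controlling the several terms that individually grow like $e^{2\rho}$ — to be the main obstacle.

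Finally, the positive mass theorem for asymptotically hyperbolic manifolds with $R\geq -6$ \cite{WXD} asserts that the mass is nonnegative and vanishes only when $(M^3,g)$ is isometric to $\mathbb{H}^3$. Combined with the previous step, this forces $(M^3,g)\cong\mathbb{H}^3$, completing the proof.
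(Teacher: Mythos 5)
Your first two steps are correct. The computation with $\phi\equiv 1$, giving $R\equiv-6$, $\AA\equiv 0$ and $\int_{\mathbb{S}}K\,d\sigma=0$, is exactly the paper's opening move. For the upgrade to $K\equiv 0$ you take a genuinely different and cleaner route: the paper runs the Fischer--Colbrie--Schoen argument (a positive solution of $\Delta_{\mathbb{S}}f-Kf=0$, the substitution $w=\log f$, and a logarithmic cut-off), whereas your first-variation trick $\phi=1+\epsilon\psi$ extracts $\int_{\mathbb{S}}K\psi\,d\sigma=0$ for all $\psi\in C^\infty_0(\mathbb{S})$ directly from the quadratic form, using only $\int_{\mathbb{S}}|K|<\infty$ (Lemma \ref{finittotalcurvature}) and $\int_{\mathbb{S}}K=0$ to justify the expansion. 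That is a legitimate simplification of this portion of the proof.

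The gap is in your third step, precisely the one you flag as the main obstacle. Integrating (\ref{laplacerhoonsigma}) is unlikely to see the mass at all: once $H=2$ the terms involving $(H-2)$ drop out, and the surviving $O(e^{-3\rho})$ remainder is only bounded in absolute value via (\ref{finiteintegral1}), with no control on its sign, so ``matching divergent interior terms against the boundary flux'' does not isolate $m$; nor is the claim that $\mathbb{S}$ is asymptotic to a horosphere justified. The mechanism that actually closes the argument is pointwise rather than integral: insert $K=0$, $\AA=0$, $R=-6$, $H=2$ into the ambient Gauss equation and use the expansion of the Ricci curvature for $h=m\sigma$ to obtain
$$0=K=\frac{m}{\sinh^3\rho}\Bigl(1-\frac{3}{2}\|\partial_\rho^\top\|^2\Bigr)+O(e^{-4\rho})\qquad\text{on }\mathbb{S}.$$
If $m\neq 0$ this forces $\|\partial_\rho^\top\|^2\to 2/3$ as $\rho\to\infty$ along $\mathbb{S}$; but Lemma \ref{finiteintegoftangentialprho} produces points $p_i\to\infty$ on $\mathbb{S}$ with $\langle v,\partial_\rho\rangle(p_i)\to 1$, hence $\|\partial_\rho^\top\|^2(p_i)\to 0$, a contradiction. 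Therefore $m=0$, and the positive mass theorem of \cite{WXD} gives $(M^3,g)\cong\mathbb{H}^3$ as you conclude. Without this (or an equivalent) derivation of $m=0$, the proof is incomplete.
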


\begin{proof}
By  taking $\varphi=1$ in Corollary \ref{strongstability}, we obtain

$$
0\leq\int_{\mathbb{S}}(R+6+\|{\AA }\|^2) d\sigma_g\leq\int_\mathbb{S}K d\sigma_g \leq0,
$$
so
\begin{eqnarray}\label{zeroflat}
R=-6,~~~~~\AA =0~~~~~~~~~~~{\mbox{on}}~\mathbb{S},\ \int_{\mathbb{S}}K d\sigma _g = 0.
\end{eqnarray}
In the following, we prove $K=0$ by the same argument in \cite{FS}. For the convenience of reader, we sketch the main argument in \cite{FS}.
By (\ref{zeroflat}), stable operator
is reduced to
\begin{equation*}
 L=\Delta_{\mathbb{S}} -K.
\end{equation*}
Therefore, stable condition of $L$ implies that there exists a positive solution $f$ to the equation
$\Delta_{\mathbb{S}} f-Kf=0 $ on $\mathbb{S}$. Setting $w=\log f$, we have
$$\Delta_{\mathbb{S}} w=K-|\nabla w|^2.$$
For any function $\phi \in C^\infty_0(\mathbb{S})$, we have
 $$\int_{\mathbb{S}}|\nabla w|^2\phi^2d\sigma_g=\int_{\mathbb{S}}K\phi^2d\sigma_g+2\int_{\mathbb{S}}\langle\nabla \phi,\nabla w\rangle\phi d\sigma_g.$$
 By Schwarz inequality, we get
 $$\frac{3}{4}\int_{\mathbb{S}}|\nabla w|^2\phi^2d\sigma_g\leq \int_{\mathbb{S}}K\phi^2d\sigma_g+4\int_{\mathbb{S}}|\nabla \phi|^2d\sigma_g.$$
Note that $\mathbb{S}$ is conformally equivalent to a compact Riemannian surface deleted finite points, by using "logarithmic cut-off trick" as in proof of Corollary \ref{strongstability}, and by choosing suitable  $\phi \in C^\infty_0(\mathbb{S})$, we get
$$
\frac{3}{4}\int_{\mathbb{S}}|\nabla w|^2 \leq \int_{\mathbb{S}}K.
$$
It implies $w= constant $. Hence, $K=0$.\\

By Gauss equation
\begin{eqnarray*}
0=K&=&\frac{R}{2}-R_{\nu\nu}+\frac{H^2}{4}-\frac{ |{\AA }|^2}{2}\\
&=&\frac{m}{\sinh^3\rho}-\frac{3m\cdot|\partial_\rho^\top|^2}{2\sinh^3 \rho}+O(\exp(-4\rho))\\
&=&\frac{m}{\sinh^3\rho}(1-\frac{3|\partial_\rho^\top|^2}{2})+O(\exp(-4\rho)).
\end{eqnarray*}
Here, $\rho$ is the distant function to the essential set $\mathbb{D}$. If $m\neq 0$, we have for $\rho$ large enough

\begin{equation}\label{zerolimits}
1-\frac{3|\partial_\rho^\top|^2}{2}=O(e^{-\rho}),\  {\mbox{on}}\  \mathbb{S}.
\end{equation}
Then, by Proposition \ref{finiteintegoftangentialprho}, we have
$$\int_{\mathbb{S}}(1-\langle \nu,\frac{\partial \ }{\partial \rho}\rangle )^2 <\infty.$$
Hence, there exists $p_i\in \mathbb{S}, p_i\rightarrow \infty$ such that  $(1-\langle \nu,\frac{\partial \ }{\partial \rho}\rangle)(p_i)\rightarrow 0$. It means that $|\partial_\rho^\top|^2(p_i)\rightarrow 0$ which contradicts with (\ref{zerolimits}). Hence, $m=0$. By positive mass theorem in \cite{WXD}, $(M^3,g)$ is isometric to $\mathbb{H}^3$.
\end{proof}

Next, we begin to prove the following main result.

\begin{theo}\label{classification}
Let $(M^3,g)$ be an AH manifold with the scalar curvature $R(g)\geq -6$ and $h=m\sigma,\ m \in \mathbb{R}$ in Definition \ref{definitionofahmfd}. Suppose that $m>0$ and $\{D_i\}$  is a family of isoperimetric regions with $\mathcal{H}^3_g(D_i)\rightarrow \infty$, we have the following classification:
\begin{enumerate}
  \item $\{D_i\}$ is an exhuastion of $(M,g)$; or
  \item there exists a subsequence of $\{\Sigma_i=\partial D_i\}$ converging to properly, strongly stable, noncompactly complete hypersurface, each connected component $\mathbb{S}$ of which is a constant mean curvature surface of $H=2$. Furthermore, $\mathbb{S}$ is conformally diffeomorphic to complex plane $\mathbb{C}$.
\end{enumerate}
Here, $\mathcal{H}^3_g(~,~ )$ denotes the Hausdroff measure in $(M, g)$ with respect to metric $g$.

\end{theo}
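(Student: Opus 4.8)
The plan is to dispatch the three mutually exclusive behaviors of $\{D_i\}$ one at a time. First I would invoke Proposition \ref{nodrift} to rule out the drift-off case: since $m>0$ forces $R$ to be non-constant near infinity (equivalently $(M^3,g)$ is not isometric to $\mathbb{H}^3$, because the AdS-Schwarzschild-type expansion with $h=m\sigma$, $m>0$ is incompatible with hyperbolic space), no connected isoperimetric region with $\mathcal{H}^3_g(D_i)\geq\delta_0$ can escape every compact set. Hence, passing to a subsequence, each $D_i$ meets a fixed compact domain $E$. This leaves exactly two possibilities: either $\{D_i\}$ eventually contains every compact set (case (1), exhaustion), or after passing to a further subsequence, every $D_i$ meets $E$ but $\{D_i\}$ is \emph{not} eventually containing some fixed compact set $\mathbf{K}$, so the boundaries $\Sigma_i=\partial D_i$ must pass through a fixed compact set (case (3)). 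I would then show case (3) leads to conclusion (2).

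In the non-exhaustion case, one has a sequence of connected isoperimetric surfaces $\{\Sigma_i\}$ passing through a fixed compact set with $\mathrm{Area}(\Sigma_i)\to\infty$ (the area blows up since $\mathcal{H}^3_g(D_i)\to\infty$, using Lemma \ref{areagrowthisosurf} to control the part inside any $\mathbb{D}_\rho$ — the volume can only grow by accumulating area at infinity, and since $\Sigma_i$ is anchored in $\mathbf{K}$ it cannot be a small ball). By Lemma \ref{hgeq2}, $H|_{\Sigma_i}>2$ for $i$ large, so $0<H\leq\Lambda$ is \emph{not} automatic; however the volume-preserving stability together with Lemma \ref{squareintergableofA0} bounds $\int_{\Sigma_i}(H^2-4)$, and combined with the comparison argument (or directly the structure of isoperimetric CMC surfaces in AH manifolds) gives $H|_{\Sigma_i}$ uniformly bounded above. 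Then Proposition \ref{curvatureestimateisoperisurf} yields a uniform bound $\|A\|\leq C$, so by the compactness theorem for properly embedded surfaces with bounded second fundamental form (as in Theorem 18 of \cite{Ros}) a subsequence converges with multiplicity one in $C^k_{\mathrm{loc}}$ to a properly embedded, complete, noncompact surface. Since $\Sigma_i$ passes through $\mathbf{K}$ and has unbounded area, the limit is nonempty and noncompact; let $\mathbb{S}$ be any complete noncompact connected component. Lemma \ref{finittotalcurvature} gives that $\mathbb{S}$ has $H\equiv 2$ and finite total absolute curvature, Proposition \ref{stableofs} and Corollary \ref{strongstability} give that $\mathbb{S}$ is strongly stable, and Huber's theorem together with the Cohn-Vossen inequality identifies the conformal type: $\mathbb{S}$ is conformally a compact Riemann surface with finitely many punctures, and the strong stability (applied with $\phi\equiv 1$ as in the proof of Theorem \ref{rigidity}) forces $\int_{\mathbb{S}}K\geq\int_{\mathbb{S}}(R+6+\|\AA\|^2)\geq 0$; combined with Cohn-Vossen $\int_{\mathbb{S}}K\leq 2\pi\chi(\mathbb{S})$ this forces genus zero and one puncture, i.e. $\mathbb{S}$ is conformally diffeomorphic to $\mathbb{C}$.

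The main obstacle I expect is the upper bound $H|_{\Sigma_i}\leq\Lambda$ needed to apply Proposition \ref{curvatureestimateisoperisurf}: for \emph{large} isoperimetric regions in an AH manifold the mean curvature should converge to $2$ from above, but making this rigorous requires an argument — one can use the isoperimetric comparison (Proposition \ref{areacomparison} together with Proposition \ref{derivativebtov} relating $H$ to $\frac{d}{dv}A(v)$), or a direct comparison with geodesic balls/distance spheres $\partial\mathbb{D}_\rho$ of the appropriate radius, noting that $\Sigma_i$ is squeezed between two such spheres and applying the mean curvature comparison as in Lemma \ref{hgeq2} but from the other side. A secondary subtlety is ensuring the limit surface is genuinely noncompact and nonempty rather than collapsing or disappearing to infinity — this is where it is essential that case (3), by definition, keeps $\partial D_i$ meeting a fixed compact set, so the multiplicity-one limit passes through that compact set and, being complete with $\mathrm{Area}(\mathbb{D}_\rho\cap\mathbb{S})$ unbounded (inherited from the area blow-up of $\Sigma_i$), cannot be compact. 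Once these points are secured, conclusion (2) follows by assembling Lemma \ref{finittotalcurvature}, Corollary \ref{strongstability}, and the Huber/Cohn-Vossen argument above.
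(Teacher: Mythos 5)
Your overall architecture matches the paper's: rule out drift via Proposition \ref{nodrift} (using $m>0$ to exclude isometry with $\mathbb{H}^3$), extract a multiplicity-one $C^k_{\mathrm{loc}}$ limit $\mathbb{S}$ from the anchored boundaries via Lemma \ref{squareintergableofA0}, Lemma \ref{hgeq2} and Proposition \ref{curvatureestimateisoperisurf}, establish strong stability via Corollary \ref{strongstability}, and then read off the conformal type from $\int_{\mathbb{S}}K\geq 0$ together with Huber's theorem. Your treatment of the upper bound on $H$ (constancy of $H$ on each $\Sigma_i$ combined with $\int_{\Sigma_i}(H^2-4)\leq\Lambda_2$ and $\mathrm{Area}(\Sigma_i)\to\infty$) is in fact more explicit than the paper's, which simply cites the relevant lemmas.

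However, the final step has a genuine gap. From strong stability with $\phi\equiv 1$ you get $\int_{\mathbb{S}}K\geq 0$, and Cohn--Vossen gives $\int_{\mathbb{S}}K\leq 2\pi\chi(\mathbb{S})$, hence $\chi(\mathbb{S})\geq 0$. For a complete surface that is conformally a compact Riemann surface with $k\geq 1$ punctures this yields $2-2g-k\geq 0$, which admits \emph{two} solutions: $(g,k)=(0,1)$, the plane, and $(g,k)=(0,2)$, the cylinder with $\chi=0$. Your assertion that the inequalities ``force genus zero and one puncture'' is therefore false as stated; the cylinder is not excluded by the sign of the total curvature alone. This is exactly where the paper must invoke Theorem \ref{rigidity}: if $\mathbb{S}$ were a cylinder, Cohn--Vossen forces $\int_{\mathbb{S}}K\leq 0$, hence $\int_{\mathbb{S}}K=0$, and Theorem \ref{rigidity} (which runs the Fischer-Colbrie--Schoen argument to show $\mathbb{S}$ is flat and umbilical, then uses Lemma \ref{finiteintegoftangentialprho} and the positive mass theorem of \cite{WXD}) concludes that $(M^3,g)$ is isometric to $\mathbb{H}^3$, contradicting $m>0$. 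Note that in your write-up the hypothesis $m>0$ plays no role in deriving conclusion (2), which should have been a warning sign; you need to add the cylinder exclusion via Theorem \ref{rigidity} to complete the proof.
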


\begin{proof}

If $D_i$ is not an exhaustion, and due to Proposition \ref{nodrift}, we have for  a fixed compact  $E$,
$$E\cap D_i\neq\emptyset,\ \ E\nsubseteq D_i,\  \mbox{for any }\ i .$$
Due to $\mathcal{H}^3_g(D_i)\rightarrow \infty$, we have
\be \label{area-distance}
{\mbox{Area}}(\Sigma_i)\rightarrow \infty \   {\mbox {and}}\  d_g(o,\Sigma_i)\leq L_0
\ee
Here, $L_0$ is a fixed constant.  Hence, by Lemma \ref{squareintergableofA0}, Lemma \ref{hgeq2}, Corollary \ref{strongstability} and Proposition \ref{curvatureestimateisoperisurf}, we obtain the first part in (2).
In the meantime, setting $\phi=1$ in Corollary \ref{strongstability}, we get
$$\label{limitsstrongstability}
\int_{\mathbb{S}}(Ric(v)+\|A\|^2) d\sigma\leq 0,
$$
Due to the Gauss equation, we see that
$$
Ric(v)+\|A\|^2 =\frac{R}2 -K +\frac12 \|\AA\|^2 +\frac{3H^2}4\geq -K.
$$
Hence, we obtain

$$
\int_{\mathbb{S}}K \geq 0.
$$

Together with Lemma \ref{finittotalcurvature} and Huber's theorem (\cite{H}), we see that the conformal type is  complex plane $\mathbb{C}$ or cylinder. If  $\mathbb{S}$ is conformally equivalent to a cylinder, we have
$$
\int_{\mathbb{S}}K = 0.
$$
Due to Theorem \ref{rigidity}, we see that $(M^3, g)$ is isometric to $\mathbb{H}^3$ which contradicts with the assumption in this theorem. Thus
we have proved that $\mathbb{S}$ is conformally equivalent to complex plane $\mathbb{C}$.

\end{proof}

Now, we prove the following result.

\begin{theo}
Let $(M^3,g)$ be an AH with $R(g)\geq -6$, $h=m\sigma$ and $m>0$. If $\mathcal{H}^3_g(D_i)\rightarrow \infty$ and $\Sigma_i=\partial D_i$ is topological sphere. If in addition,
$$  m_H(\Sigma_i)\leq C, \quad \text{for all}~~ i.$$
then  $\{D_i\}$ is an exhaustion of $(M,g)$.
  \end{theo}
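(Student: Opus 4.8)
The plan is to argue by contradiction, leveraging Theorem \ref{classification} (the trichotomy) together with Theorem \ref{rigidity}. Suppose $\{D_i\}$ is not an exhaustion of $(M,g)$. Since $m>0$, the manifold is not isometric to $\mathbb{H}^3$ (by the positive mass theorem in \cite{WXD}), so Proposition \ref{nodrift} rules out drift-off to infinity. Hence by Theorem \ref{classification} we must be in case (2): a subsequence of $\{\Sigma_i=\partial D_i\}$ converges to a properly embedded, strongly stable, noncompact complete surface, each connected component $\mathbb{S}$ of which has constant mean curvature $H=2$ and is conformally diffeomorphic to $\mathbb{C}$. The whole point is then to use the uniform Hawking mass bound $m_H(\Sigma_i)\le C$ to extract enough extra rigidity on $\mathbb{S}$ to invoke Theorem \ref{rigidity}, producing the contradiction $(M^3,g)\cong\mathbb{H}^3$.

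The key computation is to understand what $m_H(\Sigma_i)\le C$ says in the limit. Recall $m_H(\Sigma_i)=\frac{\mathcal{H}^2(\Sigma_i)^{1/2}}{(16\pi)^{3/2}}\bigl(16\pi-\int_{\Sigma_i}(H^2-4)\,d\mu\bigr)$, and $\mathcal{H}^2(\Sigma_i)\to\infty$. For the right-hand side to stay bounded we need $16\pi-\int_{\Sigma_i}(H^2-4)\to 0$, i.e. $\int_{\Sigma_i}(H^2-4)\to 16\pi$. Now combine this with the Gauss-Bonnet identity \eqref{integralofK}, which for topological spheres gives $\int_{\Sigma_i}K=4\pi$, and with the estimate from Lemma \ref{squareintergableofA0}: the stability inequality applied with the conformal test functions yields $\int_{\Sigma_i}(Ric(v)+\|A\|^2)\le 12\pi$ (the $g(\Sigma_i)=0$ case). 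Writing $Ric(v)+\|A\|^2=\frac{R}{2}-K+\frac12\|\AA\|^2+\frac{3H^2}{4}$ and feeding in $R\ge-6$, $\int K=4\pi$, and $\int(H^2-4)\to 16\pi$, one finds that both $\int_{\Sigma_i}\|\AA\|^2\to 0$ and $\int_{\Sigma_i}(R+6)\to 0$; in the limit $\mathbb{S}$ is totally umbilic ($\AA\equiv0$ on $\mathbb{S}$) with $R=-6$ along $\mathbb{S}$. Since $H=2$ on $\mathbb{S}$ and $\AA=0$, the Gauss equation becomes $K=\frac{R}{2}-Ric(v)+1$; using $R=-6$ and the asymptotics $Ric(v)=-2+O(e^{-3\rho})$ one gets $K=O(e^{-3\rho})$, and then $\int_{\mathbb{S}}|K|<\infty$ from Lemma \ref{finittotalcurvature} together with $K\to0$ at infinity forces $\int_{\mathbb{S}}K\le 0$ — in fact one should check $\int_{\mathbb{S}}K=0$.

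Once $\int_{\mathbb{S}}K\le 0$ is established, Theorem \ref{rigidity} applies directly and gives $(M^3,g)\cong\mathbb{H}^3$, contradicting $m>0$. Therefore $\{D_i\}$ must be an exhaustion, which is the claim.

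The main obstacle I expect is the bookkeeping in the limiting process: one has to pass the Hawking-mass bound, which is an integral statement on the compact surfaces $\Sigma_i$ whose areas blow up, to a statement on the noncompact limit $\mathbb{S}$, being careful that the convergence $\Sigma_i\to\mathbb{S}$ is only $C^k_{loc}$ with multiplicity one, so that integrals over $\mathbb{S}$ are controlled by liminf of integrals over $\Sigma_i\cap\mathbb{D}_\rho$ rather than over all of $\Sigma_i$. The uniform decay estimates $\int_{\Sigma_i}e^{-3\rho}\le\Lambda_3$ (formula \eqref{finiteintegral1}) and the uniform area growth $Area(\Sigma_i\cap\mathbb{D}_\rho)\le\Lambda e^{2\rho}$ (Lemma \ref{areagrowthisosurf}) are what make this transfer legitimate: they let us localize, show the "tail" contributions are uniformly small, and conclude that the limiting integrals $\int_{\mathbb{S}}\|\AA\|^2$ and $\int_{\mathbb{S}}(R+6)$ vanish. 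A secondary subtlety is ensuring that $\int_{\mathbb{S}}(H^2-4)$ behaves as expected: since $H\equiv 2$ on $\mathbb{S}$ this term is identically zero in the limit, but on $\Sigma_i$ it is the dominant term in the Hawking mass, so the argument is really that the "missing" $16\pi$ in $16\pi-\int(H^2-4)$ is forced to localize near where $\Sigma_i$ is close to a coordinate sphere, and it is precisely this localization that yields umbilicity of $\mathbb{S}$.
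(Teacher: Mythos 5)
Your overall strategy --- contradiction via Proposition \ref{nodrift} and Theorem \ref{classification} to produce the noncompact limit surface $\mathbb{S}$, then using the Hawking mass bound to verify the hypothesis $\int_{\mathbb{S}}K\le 0$ of Theorem \ref{rigidity} --- is exactly the paper's, but both quantitative steps have gaps as written. First, the umbilicity step: with the bound $\int_{\Sigma_i}(Ric(v)+\|A\|^2)\le 12\pi$ that you quote, the identity $Ric(v)+\|A\|^2=\tfrac{R+6}{2}+\tfrac12\|\AA\|^2+\tfrac34(H^2-4)-K$ together with $\int_{\Sigma_i}K=4\pi$ and $\int_{\Sigma_i}(H^2-4)\to 16\pi$ yields only $\int_{\Sigma_i}(R+6)+\int_{\Sigma_i}\|\AA\|^2\le 8\pi+o(1)$, not $\to 0$; the claimed conclusion does not follow. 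You need the sharp genus-zero Hersch bound $8\pi$ (a degree-one balanced conformal map to $\mathbb{S}^2$), which turns the same computation into $\int_{\Sigma_i}(R+6+\|\AA\|^2)\le\tfrac32\bigl(16\pi-\int_{\Sigma_i}(H^2-4)\bigr)=\tfrac32(16\pi)^{3/2}\mathcal{H}^2(\Sigma_i)^{-1/2}m_H(\Sigma_i)\to 0$; this is precisely Proposition 3.6 of \cite{OC}, which is what the paper cites for its Step 1.

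Second, and more seriously, your derivation of $\int_{\mathbb{S}}K\le 0$ from ``$K=O(e^{-3\rho})$ and $K\to 0$ at infinity'' is a non sequitur: a quantity of size $e^{-3\rho}$ on a surface with area growth $e^{2\rho}$ is absolutely integrable but its integral can have either sign, and indeed on an umbilic $H=2$ surface one has $K=\frac{m}{\sinh^3\rho}(1-\tfrac32|\partial_\rho^\top|^2)+O(e^{-4\rho})$, whose total integral is exactly the quantity at issue. The paper obtains $\int_{\mathbb{S}}K=0$ by a different mechanism: Gauss--Bonnet gives $\int_{\Sigma_i}K_i=4\pi$, while the Gauss equation plus $\int_{\Sigma_i}\|\AA_i\|^2\to0$ and $\int_{\Sigma_i}(H_i^2-4)\to16\pi$ (with $H_i$ constant and tending to $2$, so $\int_{\Sigma_i\cap B(\rho)}(H_i^2-4)\to0$ on any fixed ball) force $\int_{\Sigma_i\setminus B(\rho)}K_i\to 4\pi$; subtracting, $\int_{\Sigma_i\cap B(\rho)}K_i\to 0$, and the multiplicity-one $C^k_{loc}$ convergence then gives $\int_{\mathbb{S}\cap B(\rho)}K=0$ for every $\rho$. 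You allude to this localization in your closing paragraph but never carry it out, and without it the hypothesis of Theorem \ref{rigidity} is not verified, so the contradiction with $m>0$ is not reached.
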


\begin{proof}
We prove the theorem  in following two steps.\\

\textbf{Step1:} {\it Show that for all $\epsilon >0$,exists $N$, for all} $i\geq N$, $\int_{\Sigma_i}|\AA|^2 \leq\epsilon$.
In fact, by Proposition 3.6 in \cite{OC}
$$\int_{\Sigma_i}(R_g+6+|\AA|^2) d\mu_i\leq \frac{3}{2}A(\Sigma)^{-\frac{1}{2}}(16\pi)^{\frac{3}{2}}m_H(\Sigma_i).$$
Hence, for all $\epsilon >0$,exists $N$, for all $i\geq N$
$$\int_{\Sigma_i}|\AA|^2 \leq\epsilon.$$

\textbf{Step2:} Suppose $D_i$ is not exhaustion, then by Proposition \ref{nodrift}, all $D_i$ passes through a fixed compact set, and hence
it converges to a limit surface $\mathbb{S}$. Note that on $\Sigma_i$,
$$K_i=\frac{H_i^2-4}{4}-\frac{|\AA_i|^2}{2}+O(e^{-3\rho}).$$
we integrate on region $\Sigma_i\setminus B(\rho)$, and get
$$\int_{\Sigma_i\setminus B(\rho)}K_i=\int_{\Sigma_i\setminus B(\rho)}\frac{H_i^2-4}{4}-\int_{\Sigma_i\setminus B(\rho)}\frac{|\AA_i|^2}{2}+\int_{\Sigma_i\setminus B(\rho)}O(\exp(-3\rho)).$$
Then, we have the following estimate
$$\int_{\Sigma_i\setminus B(\rho)}\frac{H_i^2-4}{4}=4\pi-\int_{\Sigma_i\cap B(\rho)}\frac{H_i^2-4}{4}+O(\frac{1}{|\Sigma_i|^{\frac{1}{2}}}).$$
$$\int_{\Sigma_i\setminus B(\rho)}\frac{|\AA_i|^2}{2}\leq \frac{\epsilon}{3}.$$
$$\int_{\Sigma_i\setminus B(\rho)}O(\exp(-3\rho)) \leq \frac{\epsilon}{3}.$$
Then, $$\int_{\Sigma_i\setminus B(\rho)}K=4\pi+O(\epsilon).$$
implies
$$\int_{\mathbb{S}\cap B(\rho)}K=O(\epsilon).$$
Thus,
$$\int_{\mathbb{S}}K=0.$$
By  Theorem \ref{rigidity}, we have that $(M^3,g)$ is isometric to $\mathbb{H}^3$ which contradicts with the assumption condition that $m>0$. Hence, $D_i$ is an exhaustion.
\end{proof}

In the following, we begin to prove the last theorem in the article,

\begin{theo}
Let $(M^3,g)$ be an AH with $R(g)\geq -6$. For any exhausting isoperimetric domains $\{D_i\}$ ,
we have
$$\lim_{i\rightarrow\infty}(A_g(v_i)-A_{{\mathbb{H}}}(v_i))=-2V(M,g).$$

\end{theo}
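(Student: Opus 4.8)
The plan is to establish the two bounds
$$\limsup_{i\to\infty}\bigl(A_g(v_i)-A_{\mathbb{H}}(v_i)\bigr)\le -2V(M,g)\qquad\text{and}\qquad \liminf_{i\to\infty}\bigl(A_g(v_i)-A_{\mathbb{H}}(v_i)\bigr)\ge -2V(M,g),$$
where throughout $v_i=\mathcal{H}^3_g(D_i)\to\infty$ and $A_g(v_i)=\mathcal{H}^2_g(\Sigma_i)$ with $\Sigma_i=\partial D_i$, since $D_i$ is isoperimetric.

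\textbf{Upper bound.} This is immediate from Lemma \ref{largev}: as $v_i\to\infty$, both the term $2\sqrt2\,\pi^{3/2}\bigl(\int_{\mathbb{S}^2}(\mathrm{tr}_\sigma h)\bigr)v_i^{-1/2}$ and the $o(v_i^{-1/2})$ remainder tend to $0$, hence $A_g(v_i)-A_{\mathbb{H}}(v_i)\le -2V(M,g)+o(1)$.

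\textbf{Lower bound.} Here I use that $\{D_i\}$ is an exhaustion. Fix a large $\rho_0>0$. Since $\Gamma_{\rho_0+1}$ is compact and $\{D_i\}$ exhausts $M$, we have $\Gamma_{\rho_0+1}\subset D_i$ for all large $i$, so $\Sigma_i\subset\{\rho>\rho_0\}$. Using the identification of the end with a neighbourhood of infinity in $\mathbb{H}^3$ employed in the proof of Lemma \ref{largev} (under which $\Gamma_\rho$ corresponds to the geodesic ball $B_\rho\subset\mathbb{H}^3$), $\Sigma_i$ becomes a connected closed surface in $\mathbb{H}^3$ enclosing a bounded region $\widetilde D_i\supset B_{\rho_0}$, and $\widetilde D_i\setminus B_{\rho_0}$ is, as a set, the image of $D_i\setminus\Gamma_{\rho_0}$. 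I then combine three estimates, each resting on $\|g-g_{\mathbb{H}}\|_{g_{\mathbb{H}}}=O(e^{-3\rho})$ together with the area growth of Lemma \ref{areagrowthisosurf}: (i) $\bigl|\mathcal{H}^3_g(D_i\setminus\Gamma_{\rho_0})-\mathcal{H}^3_{\mathbb{H}}(\widetilde D_i\setminus B_{\rho_0})\bigr|\le\int_{\{\rho>\rho_0\}}\bigl|\sqrt g-\sqrt{g_{\mathbb{H}}}\bigr|\,dv=:\eta(\rho_0)$, which is finite and tends to $0$ as $\rho_0\to\infty$ because $e^{-3\rho}\sinh^2\rho$ is integrable; (ii) $\mathcal{H}^3_g(\Gamma_{\rho_0})-\mathcal{H}^3_{\mathbb{H}}(\Gamma_{\rho_0})=V(M,g)+O(e^{-\rho_0})$, which is exactly formula \eqref{v}; (iii) $\bigl|\mathcal{H}^2_g(\Sigma_i)-\mathcal{H}^2_{\mathbb{H}}(\Sigma_i)\bigr|\le C\int_{\Sigma_i}e^{-3\rho}\,d\mu_g\le C'e^{-\rho_0}$, obtained by summing $\mathcal{H}^2_g\bigl(\Sigma_i\cap(\Gamma_{k+1}\setminus\Gamma_k)\bigr)\le\Lambda e^{2(k+1)}$ over the integers $k\ge\rho_0$.

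Subtracting the decompositions $v_i=\mathcal{H}^3_g(\Gamma_{\rho_0})+\mathcal{H}^3_g(D_i\setminus\Gamma_{\rho_0})$ and $\mathcal{H}^3_{\mathbb{H}}(\widetilde D_i)=\mathcal{H}^3_{\mathbb{H}}(\Gamma_{\rho_0})+\mathcal{H}^3_{\mathbb{H}}(\widetilde D_i\setminus B_{\rho_0})$, estimates (i) and (ii) give $\mathcal{H}^3_{\mathbb{H}}(\widetilde D_i)=v_i-V(M,g)+\varepsilon_i$ with $|\varepsilon_i|\le\tau(\rho_0):=C\bigl(e^{-\rho_0}+\eta(\rho_0)\bigr)$ uniformly in $i$. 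Applying the isoperimetric inequality of $\mathbb{H}^3$ to $\widetilde D_i$ and then (iii),
$$A_g(v_i)=\mathcal{H}^2_g(\Sigma_i)\ge\mathcal{H}^2_{\mathbb{H}}(\Sigma_i)-C'e^{-\rho_0}\ge A_{\mathbb{H}}\bigl(\mathcal{H}^3_{\mathbb{H}}(\widetilde D_i)\bigr)-C'e^{-\rho_0}.$$
Since $\tfrac{d}{dv}A_{\mathbb{H}}(v)=2\sqrt{1+4\pi/A_{\mathbb{H}}(v)}\to2$ as $v\to\infty$ (which follows from the expression for $\tfrac{d}{dv}f_{\mathbb{H}}$ recorded in the proof of Proposition \ref{areacomparison}), for fixed $\rho_0$ and $i\to\infty$ we get $A_{\mathbb{H}}\bigl(\mathcal{H}^3_{\mathbb{H}}(\widetilde D_i)\bigr)-A_{\mathbb{H}}(v_i)=-2V(M,g)+2\varepsilon_i+o(1)\ge-2V(M,g)-2\tau(\rho_0)+o(1)$. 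Therefore $\liminf_{i\to\infty}\bigl(A_g(v_i)-A_{\mathbb{H}}(v_i)\bigr)\ge-2V(M,g)-2\tau(\rho_0)-C'e^{-\rho_0}$, and letting $\rho_0\to\infty$ finishes the lower bound; combining it with the upper bound proves the theorem. (As a by-product, the same splitting shows $\mathcal{H}^3_g(D_i)-\mathcal{H}^3_{\mathbb{H}}(D_i)\to V(M,g)$, so the renormalized volume may equally be computed along $\{D_i\}$.)

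\textbf{Main obstacle.} The delicate part is making the comparison with $\mathbb{H}^3$ precise and uniform in $i$: that $\Sigma_i$, regarded as a surface in $\mathbb{H}^3$, genuinely bounds a region of hyperbolic volume $v_i-V(M,g)+o_{\rho_0\to\infty}(1)$, and that all three error terms --- the volume defect of the transition shell $\{\rho>\rho_0\}$, the renormalized-volume remainder of Lemma \ref{largev}, and the area defect of $\Sigma_i$ --- are controlled by one function of $\rho_0$ independent of $i$. The single quantitative point that makes this work is that $e^{-3\rho}$ strictly dominates the $e^{2\rho}$ growth of areas and volumes in the end, leaving an integrable $e^{-\rho}$ tail; once that is secured the asymptotics $A_{\mathbb{H}}'\to2$ supplies the constant $-2$.
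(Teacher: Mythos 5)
Your proposal is correct and follows essentially the same route as the paper: the upper bound from Lemma \ref{largev}, and the lower bound by regarding $\Sigma_i$ as a surface in $\mathbb{H}^3$ enclosing hyperbolic volume $v_i-V(M,g)+o(1)$, applying the hyperbolic isoperimetric inequality, and converting the volume shift $V(M,g)$ into the area shift $2V(M,g)$ (the paper does this via explicit $\sinh^2\rho_v$ asymptotics, you via $A_{\mathbb{H}}'\to 2$, which is the same computation). Your version is somewhat more careful about the uniformity in $i$ of the three error terms, which the paper simply asserts as $o(1)$.
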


\begin{proof}
Combining with lemma \ref{largev}, we just need to prove:
$$\lim_{v\rightarrow\infty}A_g(v)-A_{{\mathbb{H}}}(v)\geq-2V(M,g).$$
Because $\mathcal{D}_i$ is an exhaustion of $M$ and $(M,g)$ is asymptotically hyperbolic, we can easily get $$\lim\limits_{i\rightarrow\infty}(\mathcal{H}^3_{g}(D_i)-\mathcal{H}^3_{g_{\mathbb{H}}}(D_i))=V(M,g),\  \mathcal{H}^2_g(\Sigma_i)-\mathcal{H}^2_{\mathbb{H}}(\Sigma_i)=o(1),\ \ i\rightarrow\infty $$
Choosing $\rho_v$ satisfying with $A_{{\mathbb{H}}}(v)=4\pi \sinh^2\rho_v$, we obtain
\begin{eqnarray*}
v&=&\int_{0}^{\rho_v}4\pi \sinh^{2}\rho \  d\rho=2\pi \sinh^{2}\rho_v+\pi-2\pi\rho_v-\pi e^{-2\rho_v}.
\end{eqnarray*}
Set $\bar{v}_i=\mbox{vol}_{g_{\mathbb{H}}}(D_i)$. Then, $$A_{{\mathbb{H}}}(\bar{v}_i)\leq \mathcal{H}^2_{\mathbb{H}}(\Sigma_i)\  \mbox{and} \ \lim\limits_ {i\rightarrow\infty}(v_i-\bar{v}_i)=V(M,g).$$
Hence,
\begin{eqnarray*}
\lim_{i\rightarrow\infty}(2\pi \sinh^{2}\rho_{v_i}-2\pi\rho_{v_i}-\pi e^{-2\rho_{v_i}}-2\pi \sinh^{2}\rho_{\bar{v}_i}+2\pi\rho_{\bar{v}_i}+\pi e^{-2\rho_{\bar{v}_i}})=V(M,g).
\end{eqnarray*}
 Notice that $i\rightarrow\infty$ implies  $$v_i\rightarrow\infty,\  \bar{v}_i\rightarrow\infty,\   \rho_{v_i}\rightarrow\infty,\  \rho_{\bar{v}_i}\rightarrow\infty.$$
 So,
\begin{eqnarray*}
\lim_{i\rightarrow\infty}(2\pi \sinh^{2}\rho_{v_i}-2\pi\rho_{v_i}-2\pi \sinh^{2}\rho_{\bar{v}_i}+2\pi\rho_{\bar{v}_i} )=V(M,g);
\end{eqnarray*}
i.e.,
\begin{eqnarray*}
2\pi e^{2\rho_{\bar{v}_i}} (e^{2(\rho_{v_i}-\rho_{\bar{v}_i})}-1)-2\pi(\rho_{v_i}-\rho_{\bar{v}_i})\rightarrow V(M,g),
\end{eqnarray*}
so
$$e^{2(\rho_{v_i}-\rho_{\bar{v}_i})}-1=O(e^{-2\rho_{\bar{v}_i}}),$$
and
$$\rho_{v_i}-\rho_{\bar{v}_i}=O(e^{-2\rho_{\bar{v}_i}}).$$
  Therefore,
  \begin{eqnarray*}
\lim_{i\rightarrow\infty}(2\pi \sinh^{2}\rho_{v_i}-2\pi \sinh^{2}\rho_{\bar{v}_i})=V(M,g).
\end{eqnarray*}
and
\begin{eqnarray*}
&&A_g(v_i)-A_{{\mathbb{H}}}(v_i)\\
&&=A_g(v_i)-A_{{\mathbb{H}}}(\bar{v}_i)+A_{{\mathbb{H}}}(\bar{v}_i)-A_{{\mathbb{H}}}(v_i)\\
&&\geq \mathcal{H}^2_g(\Sigma_i)-\mathcal{H}^2_{\mathbb{H}}(\Sigma_i)+4\pi \sinh^2\rho_{\bar{v}_i}-4\pi \sinh^2\rho_{v_i}\\
&&=-2V(M,g)+o(1).
\end{eqnarray*}
Thus, we finish proof of the theorem.

\end{proof}

\end{document}